\documentclass{amsart}
\usepackage{latexsym,amsfonts,amssymb,amsthm,mathrsfs,
  amsmath,amscd,enumerate,amscd,color, enumitem}
\usepackage[utf8x]{inputenc}



 \newtheorem{thm}{Theorem}[section]
 \newtheorem{cor}[thm]{Corollary}
 \newtheorem{lem}[thm]{Lemma}
 
\theoremstyle{definition}
\newtheorem{defn}[thm]{Definition}
 \theoremstyle{remark}
 \newtheorem{rem}[thm]{Remark}

\setlength{\parskip}{1em}

\newcommand{\mbb}[1]{\ensuremath{\mathbb{#1}}}
\newcommand{\mc}[1]{\ensuremath{\mathcal{#1}}}

\newcommand{\R}{\mathbb{R}} 


\newcommand{\pp}{{p(\cdot)}}
\newcommand{\cpp}{{p'(\cdot)}}
\newcommand{\Lp}{L^{p(\cdot)}}

\newcommand{\Pp}{\mathcal P}
\newcommand{\qq}{{q(\cdot)}}

\numberwithin{equation}{section}
\allowdisplaybreaks

\begin{document}

\title[The Calderón operator and the Stieltjes transform on
$L^{p(\cdot)}_w$] {The Calderón operator and the Stieltjes transform
  on variable Lebesgue spaces with weights}

\author[D. Cruz-Uribe]{David Cruz-Uribe}
\address{David Cruz-Uribe, OFS \newline 
	Department of Mathematics, University of Alabama, Tuscaloosa, AL 35487, USA}
\email{dcruzuribe@ua.edu}

\author[E. Dalmasso]{Estefan\'ia Dalmasso}
\address{Estefan\'ia Dalmasso\newline
	Instituto de Matem\'atica Aplicada del Litoral, UNL, CONICET, FCE/FIQ.\newline Colectora Ruta Nac. N° 168, Paraje El Pozo, S3007ABA, Santa Fe, Argentina}
\email{edalmasso@santafe-conicet.gov.ar}

\author[F. J. Mart\'in-Reyes]{Francisco J. Mart\'in-Reyes}
\address{Francisco J. Mart\'in-Reyes, Pedro Ortega Salvador\newline
	Facultad de Ciencias, Universidad de M\'alaga\newline
	Campus de Teatinos, 29071 M\'alaga, Spain}
\email{martin\_reyes@uma.es, portega@uma.es}

\author[P. Ortega]{Pedro Ortega Salvador}
\address{}
\email{}

\thanks{The first author was supported by research funds provided by
  the Dean of the College of Arts \& Sciences, the University of
  Alabama.  The second author was supported by CONICET, ANPCyT and
  CAI+D (UNL). The third and fourth authors were supported by
  grants MTM2011-28149-C02-02 and MTM2015-66157-C2-2-P (MINECO/FEDER)
  of the Ministerio de Economía y Competitividad (MINECO, Spain) and
  grant FQM-354 of the Junta de Andalucía, Spain.  The initial stages
  of this project were begun while the first two authors were visiting
  Málaga, and they want to thank the third and fourth authors for
  their hospitality.  }

\subjclass[2010]{Primary 42B25; Secondary 26D15, 42B35}

\keywords{Calder\'on operator, Hardy operator, Stieltjes transform,
  maximal operator, weighted inequalities, Muckenhoupt weights,
  variable Lebesgue spaces}



\begin{abstract} 
  We characterize the weights for the Stieltjes transform and the
  Calder\'on operator to be bounded on the weighted variable Lebesgue spaces
  $L_w^{p(\cdot)}(0,\infty)$, assuming that the exponent function
  $\pp$ is log-H\"older continuous at the origin and at infinity. We obtain a single 
  Muckenhoupt-type condition by means of a maximal operator defined
  with respect to the basis of intervals $\{ (0,b) : b>0\}$ on
  $(0,\infty)$. Our results extend those in \cite{DMRO1} for the
  constant exponent $L^p$ spaces with
  weights. We also give two applications: the first  is a weighted
  version of Hilbert's inequality on variable Lebesgue spaces, and the
  second  generalizes the results in \cite{SW}
  for integral operators to the variable exponent setting.
\end{abstract}

\maketitle

\section{Introduction and results}

In this paper we consider two classical operators: the generalized
Stieltjes transform $S_\lambda$ and the generalized Calder\'on
operator $\mc C_\lambda$, where $0<\lambda \leq 1$, defined for
non-negative functions $f$ on $(0,\infty )$ by
$$
S_\lambda f(x)=\int_0^{\infty}\frac{f(y)}{(x+y)^\lambda}\,dy
$$
and
\[\mc C_\lambda f(x)=\frac{1}{x^\lambda}\int_0^x f(y)\,dy+\int_x^\infty
  \frac{f(y)}{y^\lambda}\,dy.  \]

The Calderón operator $\mc C=\mc C_1$ plays an important role in the
theory of interpolation:  see \cite{BMR}.   More generally, we have
that for $\lambda>0$, $\mc C_\lambda=H_\lambda +H_\lambda^*$, where
\[ H_\lambda f(x) = \frac{1}{x^\lambda}\int_0^x f(y)\,dy \]
is a Hardy-type operator and $H_\lambda^*$ its adjoint. 
The Stieltjes transform
$S=S_1$ is, formally, the same as $\mc{L}^2=\mc{L}\circ \mc{L}$, where
$\mc{L}$ is the Laplace transform. A classical reference for
the Stieltjes transform is the monograph by D. Widder \cite{W}.

These two operators clearly satisfy
$2^{-\lambda}\mc C_\lambda f(x)\le S_\lambda f(x)\le \mc C_\lambda
f(x)$, so $S_\lambda$ is bounded on a Banach function space if and
only if $\mc C_\lambda$ is.  Hereafter, given functions $f,g\geq 0$ we will write $f\lesssim g$ if
there exists $c>0$ such that $f\leq cg$.
If $f\lesssim g$ and $g\lesssim f$ hold, we will write $f\sim g$.
Thus we have that $S_\lambda f \sim \mc{C}_\lambda f$.

We shall also consider the operator
\[S^\alpha f(x)=\int_0^\infty \frac{|x-t|^\alpha}{(x+t)^{\alpha+1}} f(t) dt,\quad \alpha\geq 0, \]
and
$\mc C^\alpha$, which is the sum of the Riemann-Liouville and Weyl
averaging operators: 
\[\mc C^\alpha f(t)
=
I^\alpha f(t)+J^\alpha  f(t)
=
\frac{\alpha+1}{t^{\alpha+1}}\int_0^t (t-x)^\alpha f(x)\,dx
 +
(\alpha+1)\int_t^{\infty} \frac{(x-t)^\alpha}{x^{\alpha+1}} f(x)\,dx.\] 
It is clear that if $\alpha=0$, then $S^\alpha$, $\mc C^\alpha$,
$I^\alpha$ and $J^\alpha$ are $S$, $\mc C$, $H_1$ and $H_1^*$,
respectively. Moreover, $I^\alpha f\lesssim H_1 f$,
$J^\alpha f\lesssim H_1^* f$, $\mc C^\alpha f\lesssim \mc C_1 f$ and
$S^\alpha f \sim \mc{C}^\alpha f$ for non-negative measurable
functions $f$.

\medskip

To put our results into context, we briefly review the history of
weighted norm inequalities for the Calder\'on operator $\mc C_\lambda$ and the
Stieltjes transform $S_\lambda$, which in turn depend on the weighted norm
inequalities for the Hardy operator $H_\lambda$.   Muckenhoupt~\cite{M1}
established two-weight norm inequalities for the Hardy operator; this
implicitly gave bounds for the Calder\'on operator using this
condition and its dual.   A different condition for the Stieltjes transform, expressed in terms of
the operator $S_\lambda$ applied to the pair of weights, was discovered by
Andersen \cite{A}.   As a consequence, he proved the following
one-weight condition.

\begin{thm} \label{thm:andersen}
Given $0<\lambda \leq 1$ and $1<p  <\frac{1}{1-\lambda}$, define $q\geq p$ by
$\frac{1}{q}=\frac{1}{p}-(1-\lambda)$.  Then 
$S_\lambda : L^p(w^p) \rightarrow L^q(w^q)$ if and only if the weight
$w$ satisfies the $A_{p,q,0}$ condition:
\begin{equation}\label{Apq0}
\sup_{b>0}\left(\frac 1b\int_0^b w^q\,dx\right)^{\frac 1p}\left(\frac
  1b\int_0^bw^{-p'}\,dx\right)^{\frac 1{p'}}<\infty,
\end{equation}
where $p'$ stands for the H\"older conjugate exponent of $p$. 
\end{thm}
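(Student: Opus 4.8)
The plan is to exploit the elementary comparison $S_\lambda f \sim \mc{C}_\lambda f = H_\lambda f + H_\lambda^* f$ recorded in the introduction, so that the boundedness of $S_\lambda$ from $L^p(w^p)$ to $L^q(w^q)$ is equivalent to the simultaneous boundedness of the Hardy-type operator $H_\lambda$ and its adjoint $H_\lambda^*$ between the same spaces. For $H_\lambda$ one invokes Muckenhoupt's two-weight characterization \cite{M1}: writing $u = w^q$ and $v = w^{-p'}$ (so that $v^{1-p} = (w^{-p'})^{1-p} = w^{p'p/(p'-1)\cdot\frac{-1}{p'-1}}$, i.e. the dual weight for $L^p(w^p)$ is exactly $w^{-p'}$), the operator $H_\lambda : L^p(w^p)\to L^q(w^q)$ holds iff
\[
\sup_{b>0}\left(\int_b^\infty x^{-\lambda q}\,u(x)\,dx\right)^{1/q}\left(\int_0^b v(x)\,dx\right)^{1/p'}<\infty,
\]
and dually $H_\lambda^*$ is bounded iff the analogous condition with the roles of the intervals $(0,b)$ and $(b,\infty)$ reversed holds. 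The crux is then to show that, once $q$ is tied to $p$ by the balance relation $\frac1q=\frac1p-(1-\lambda)$, both of these two-weight conditions collapse to the single one-weight condition \eqref{Apq0}.

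The key step — and the one I expect to be the main obstacle — is this reduction. First I would show \eqref{Apq0} $\Rightarrow$ the two Muckenhoupt conditions. The standard device is a dyadic decomposition of the tail integral: write $\int_b^\infty x^{-\lambda q} w^q\,dx = \sum_{k\ge 0}\int_{2^k b}^{2^{k+1}b} x^{-\lambda q} w^q\,dx \lesssim \sum_{k\ge 0}(2^k b)^{-\lambda q}\int_0^{2^{k+1}b} w^q\,dx$, bound each $\int_0^{2^{k+1}b} w^q$ using \eqref{Apq0} in terms of $(2^{k+1}b)^{q/p}\big(\tfrac1{2^{k+1}b}\int_0^{2^{k+1}b}w^{-p'}\big)^{-q/p'}$, then use that $\int_0^b w^{-p'}\le \int_0^{2^{k+1}b}w^{-p'}$ is increasing in $k$ to pull out a factor and sum the remaining geometric series in $k$ — this is precisely where the exponent arithmetic $-\lambda q + q/p = q(\frac1p-\lambda) = q(\frac1q-(1-\lambda)-\lambda)\cdot(\text{something})$... more carefully, $-\lambda q + \frac qp = q(\frac1p-\lambda)$, and since $\frac1p = \frac1q+(1-\lambda)$ this equals $q(\frac1q+1-\lambda-\lambda) $; the point is that the constraint $p<\frac1{1-\lambda}$ forces this exponent to have the right sign for geometric summability. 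The dual direction and the converse implication (testing the operator on $f=\chi_{(0,b)}w^{-p'}$ to recover \eqref{Apq0}) are then routine.

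Alternatively — and this may be cleaner to present — one can cite Andersen's theorem \cite{A} directly, since the excerpt attributes exactly this statement to him; in that case the "proof" is simply to record that the $A_{p,q,0}$ condition as displayed coincides with Andersen's one-weight condition, and that the equivalence $S_\lambda f\sim \mc{C}_\lambda f$ together with Muckenhoupt's result \cite{M1} gives an independent derivation. I would present the dyadic-summation argument sketched above as the self-contained proof, flagging the exponent bookkeeping and the role of the hypothesis $1<p<\frac1{1-\lambda}$ (equivalently $p\le q<\infty$) as the only genuinely delicate points, and leaving the symmetric dual estimate to the reader with the phrase "by a similar argument applied to $H_\lambda^*$."
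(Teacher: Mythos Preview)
Your approach via Muckenhoupt's two-weight theorem for $H_\lambda$ is a natural alternative to the paper's route, but the dyadic summation step has a genuine gap. Write $\int_b^\infty x^{-\lambda q}w^q\,dx\lesssim\sum_{k\ge0}(2^kb)^{-\lambda q}\int_0^{2^{k+1}b}w^q$ and apply the $A_{p,q,0}$ condition in the form consistent with Definition~\ref{Apq0-var} (exponent $1/q$ on the $w^q$ average). Since $1/p'=\lambda-1/q$, one has $1+q/p'=\lambda q$, so the condition yields $\int_0^{r}w^q\le C\,r^{\lambda q}\bigl(\int_0^r w^{-p'}\bigr)^{-q/p'}$ and the powers of $2^kb$ cancel \emph{exactly}. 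You are left with $\sum_{k\ge0}\bigl(\int_0^{2^{k+1}b}w^{-p'}\bigr)^{-q/p'}$; monotonicity of $r\mapsto\int_0^r w^{-p'}$ only bounds each summand by $\bigl(\int_0^b w^{-p'}\bigr)^{-q/p'}$, and you are summing infinitely many such terms. Convergence requires that $\int_0^r w^{-p'}$ grow at least like a positive power of $r$; this is true, but it is a structural consequence of $w^{-p'}\in A_{\infty,0}$ (cf.\ the argument behind Lemma~\ref{lemma:non-integrable}), not a matter of exponent arithmetic or of the hypothesis $p<1/(1-\lambda)$. Your own aborted computation of the exponent is a symptom of this: there is no sign to get right, because the net power of $2^k$ is zero.

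The paper avoids the two-weight Hardy condition entirely. For $\lambda=1$ it proves directly that the maximal operator $N$ is bounded on $L^{\pp}_w$ iff $w\in A_{\pp,0}$ (Theorem~\ref{strongtypeN}), and then uses the sandwich $Hf\le Nf\le\mc{C}f$ together with duality to handle $\mc{C}=H+H^*$. For $0<\lambda<1$ the reduction is pointwise: Lemma~\ref{clases} gives $w\in A_{p,q,0}\Leftrightarrow w^{1/\lambda}\in A_{\lambda q,0}$, and the Hedberg-type inequality of Lemma~\ref{Hedberg},
\[
S_\lambda(f/w)(x)\le\bigl[S\bigl(g^{1/\lambda}\bigr)(x)\bigr]^{\lambda}\Bigl(\int_0^\infty f^{p}\Bigr)^{1-\lambda},\qquad g=f^{p/q}w^{-1},
\]
reduces $S_\lambda:L^p(w^p)\to L^q(w^q)$ to the boundedness of $S$ on $L^{\lambda q}_{w^{1/\lambda}}$, which is the $\lambda=1$ case already in hand. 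Your testing argument for necessity ($f=\chi_{(0,b)}w^{-p'}$) matches the paper's; it is the sufficiency direction where the strategies diverge.
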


The $A_{p,q,0}$ condition is a weaker version of the $A_{p,q}$ condition
introduced by Muckenhoupt and Wheeden~\cite{MW} to characterize the
weighted norm inequalities for fractional integrals and fractional
maximal operators.  (See also~\cite{CruzUribe:2016ji}.)  In the
one-weight case the
restriction on $p$ and $q$ is natural:  by homogeneity, if $S_\lambda
: L^p(0,\infty) \rightarrow L^q(0,\infty)$, then
$\frac{1}{q}=\frac{1}{p}-(1-\lambda)$.  

For other results on weighted norm inequalities for the Hardy
operator, the Calder\'on operator and the Stieltjes transform, we
refer the reader to  Sinnammon \cite{S} and  Gogatishvili, {\em et
  al.} \cite{GKP13, GKPW,GPSW}.

A different approach to the one-weight inequalities for $S_\lambda$
and $\mc C_\lambda$ in the case $\lambda=1$ was developed by
Duoandikoetxea, Mart\'{\i}n-Reyes and Ombrosi \cite{DMRO1}.  They
introduced a maximal operator $N$ defined with respect to the basis
$\mathcal{B} = \{ (0,b) : b>0 \}$: for $f\in L^1_{\rm loc}(0,\infty)$ and
$x\in (0,\infty)$,
\[Nf(x)=\sup\limits_{b>x}\frac{1}{b}\int_0^b |f(y)|\,dy.\]
They proved the following weighted norm inequality. 

\begin{thm} \label{thm:DMRO-thm}
Given $1<p  <\infty$, 
$N : L^p(w) \rightarrow L^p(w)$ if and only if the weight
$w$ satisfies the $A_{p,0}$ condition:
\begin{equation}\label{Ap0}
\sup_{b>0}\left(\frac 1b\int_0^b w\,dx\right)
\left(\frac
  1b\int_0^bw^{1-p'}\,dx\right)^{p-1}<\infty.
\end{equation}
\end{thm}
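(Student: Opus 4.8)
The plan is to prove the two implications separately, and the key observation is that $N$ is an averaging-type maximal operator over the one-parameter family $\{(0,b):b>0\}$, so its boundedness should be governed by a Muckenhoupt-type condition exactly as the Hardy–Littlewood maximal operator is governed by $A_p$. First I would establish necessity. Assuming $N:L^p(w)\to L^p(w)$, fix $b>0$ and test the inequality on $f=w^{1-p'}\chi_{(0,b)}$. For $x\in(0,b)$ one has $Nf(x)\geq \frac1b\int_0^b w^{1-p'}\,dy$, so
\[
\left(\frac1b\int_0^b w^{1-p'}\,dy\right)^p \int_0^b w\,dx
\;\leq\; \int_0^\infty (Nf)^p w
\;\lesssim\; \int_0^\infty f^p w
\;=\;\int_0^b w^{1-p'}\,dx,
\]
and rearranging gives \eqref{Ap0} (after the usual check that $w^{1-p'}$ is locally integrable, which follows by first testing on truncations $w^{1-p'}\chi_{\{w>\varepsilon\}}$ and letting $\varepsilon\to0$).

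For sufficiency I would argue that the $A_{p,0}$ condition \eqref{Ap0} actually self-improves to a "global" estimate: dividing the defining supremum bound by appropriate powers and using that the intervals are nested, one gets for every $b>0$ and every measurable $E\subset(0,b)$ a bound of the form $\frac{w(E)}{w((0,b))}\lesssim \left(\frac{|E|}{b}\right)^{\delta}$ for some $\delta>0$ (a reverse-Hölder/$A_\infty$-type consequence of \eqref{Ap0} restricted to the basis $\mathcal B$). More directly, I would run the standard good-$\lambda$ or Calderón–Zygmund-type argument adapted to $\mathcal B$: the level sets $\{Nf>\lambda\}$ are, by the very definition of $N$ as a sup over intervals $(0,b)$ containing $x$, themselves intervals of the form $(0,b_\lambda)$, which makes the stopping-time / covering step trivial. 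Then the $A_{p,0}$ condition converts the average of $|f|$ over $(0,b_\lambda)$ into control of $w((0,b_\lambda))$ by $\lambda^{-p}\int |f|^p w$, and summing over dyadic $\lambda$ yields $\|Nf\|_{L^p(w)}\lesssim \|f\|_{L^p(w)}$.

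An alternative and perhaps cleaner route to sufficiency is duality combined with the factorization $\mathcal C_1 = H_1 + H_1^*$: since $Nf$ is pointwise comparable to $H_1(|f|)$ composed with a nondecreasing rearrangement — more precisely, $Nf(x)=\sup_{b>x}\frac1b\int_0^b|f|$ is the "least nonincreasing majorant" of $\frac1x\int_0^x|f|$ — one can deduce the weighted bound for $N$ from Muckenhoupt's two-weight Hardy inequality \cite{M1} applied with the pair $(w,w)$, noting that \eqref{Ap0} is precisely Muckenhoupt's condition for $H_1:L^p(w)\to L^p(w)$. The main obstacle, and the step that needs care, is precisely this comparison: one must show that passing from $H_1f$ to its nonincreasing majorant $Nf$ does not destroy the $L^p(w)$ bound. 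This is where the geometry of the basis $\mathcal B$ enters essentially — the nestedness of the intervals $(0,b)$ means the majorant is attained along the same family of intervals that appears in \eqref{Ap0}, so no loss occurs. I expect the bulk of the technical work to be in making this "nonincreasing majorant" reduction rigorous (equivalently, in the covering lemma for $\mathcal B$), while the Muckenhoupt-condition bookkeeping is routine.
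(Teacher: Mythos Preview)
Your necessity argument is correct and is the standard one; it is essentially the constant-exponent version of the paper's proof of (ii)$\Rightarrow$(iii) in Theorem~\ref{strongtypeN} (there done via duality from the weak-type inequality, but for fixed $p$ the direct test on $w^{1-p'}\chi_{(0,b)}$ is equivalent).

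For sufficiency, your first route is correct in outline and matches the paper. The paper does not reprove Theorem~\ref{thm:DMRO-thm} separately---it is quoted from \cite{DMRO1}---but the proof of the variable-exponent Theorem~\ref{strongtypeN} in Section~\ref{secN}, specialized to constant $p$, is exactly the Calder\'on--Zygmund argument you sketch: the level sets $\{Nf>2^k\}=(0,b_k)$ satisfy $2^k b_k=\int_0^{b_k}f$ and $|[b_{k+1},b_k)|\geq b_k/2$; then the $A_{p,0}$ condition together with $\sigma=w^{1-p'}\in A_{\infty,0}$ reduces the estimate to boundedness of the weighted maximal operator $N_\sigma$ on $L^p(d\sigma)$, which is immediate by Marcinkiewicz since $N_\sigma$ is trivially weak $(1,1)$ with respect to $d\sigma$. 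Your phrase ``summing over dyadic $\lambda$'' is a little loose---what you wrote yields the weak $(p,p)$ inequality, and the upgrade to strong type needs either this $N_\sigma$ step or the self-improvement $A_{p,0}\Rightarrow A_{p-\varepsilon,0}$ you mention---but the ingredients are correct.

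Your second route, however, contains a genuine error. You claim that \eqref{Ap0} ``is precisely Muckenhoupt's condition for $H_1:L^p(w)\to L^p(w)$''. It is not: Muckenhoupt's necessary and sufficient condition from \cite{M1} is
\[
\sup_{r>0}\left(\int_r^\infty \frac{w(x)}{x^p}\,dx\right)\left(\int_0^r w^{1-p'}\,dx\right)^{p-1}<\infty,
\]
which is strictly weaker than $A_{p,0}$. The paper gives an explicit counterexample in Remark~\ref{rem:ctr-example} (for $p=2$: $w=1$ on $(0,1]$, $w=e^{-x}$ on $(1,\infty)$) satisfying Muckenhoupt's condition but not $A_{p,0}$. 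For this weight $H_1$ is bounded on $L^2(w)$ but $N$ is not, so the ``nonincreasing majorant'' step you flag as delicate actually fails: boundedness of $H_1$ alone cannot give boundedness of $N$. The correct statement in this direction is that $A_{p,0}$ is equivalent to boundedness of $H_1$ \emph{and} $H_1^*$ simultaneously (equivalently, of $\mathcal C=H_1+H_1^*$), and then $Nf\leq \mathcal C f$ closes the loop---but this is Theorem~\ref{thm:DMRO-calderon-thm}, not a shortcut around it.
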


The $A_{p,0}$ condition is analogous to the Muckenhoupt $A_p$
condition, which characterizes weighted norm inequalities for the
Hardy-Littlewood maximal operator~\cite{M2} (see
also~\cite{CruzUribe:2016ji}).  This class is related to the
$A_{p,q,0}$ class given above:  if $q=p$ and $w\in A_{p,p,0}$, then
$w^p\in A_{p,0}$.  

For non-negative functions $f$, we have that $Nf\leq \mc
C f$:  given $0<x<b$
\[ \frac{1}{b}\int_0^b f(y)\,dy 
\leq 
\frac{1}{x}\int_0^x f(y)\,dy
+
\int_x^b \frac{f(y)}{y}\,dy 
\leq
S f(x); \]
if we take the supremum over all such $b$ we get the desired inequality. 
Similarly,  we also have that $H f\leq Nf$.   By a straightforward
duality argument using the Hardy operators, in~\cite{DMRO1} they
proved the following result.

\begin{thm} \label{thm:DMRO-calderon-thm}
Given $1<p  <\infty$, 
$\mc C : L^p(w) \rightarrow L^p(w)$ if and only if the weight
$w$ satisfies the $A_{p,0}$ condition; a similar result holds for
$S$.
\end{thm}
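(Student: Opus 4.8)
The plan is to reduce the theorem to Theorem~\ref{thm:DMRO-thm} for the maximal operator $N$ via the splitting $\mc C=H_1+H_1^*$, handling the adjoint half by weighted duality, and then to deduce the statement for $S$ from the pointwise comparison $Sf\sim\mc C f$.

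\emph{Necessity.} Assume $\mc C\colon L^p(w)\to L^p(w)$. For non-negative $f$ we have $Nf\le\mc C f$, and since $Nf$ depends only on $|f|$ this gives $\|Nf\|_{L^p(w)}=\|N|f|\|_{L^p(w)}\le\|\mc C|f|\|_{L^p(w)}\lesssim\|f\|_{L^p(w)}$ for all $f$; hence $N\colon L^p(w)\to L^p(w)$, and Theorem~\ref{thm:DMRO-thm} forces $w\in A_{p,0}$. The same argument, using $Nf\le Sf$ for $f\ge0$, shows that boundedness of $S$ on $L^p(w)$ also implies $w\in A_{p,0}$.

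\emph{Sufficiency.} Assume $w\in A_{p,0}$. The Hardy term is immediate: $H_1f\le Nf$ for $f\ge0$, so Theorem~\ref{thm:DMRO-thm} yields $H_1\colon L^p(w)\to L^p(w)$. For the adjoint term, note that $H_1^*$ is the Lebesgue-measure adjoint of $H_1$: Fubini's theorem gives $\int_0^\infty (H_1^*f)\,g\,dx=\int_0^\infty f\,(H_1g)\,dx$ for $f,g\ge0$, after first checking this for compactly supported functions and then passing to the general case by monotone convergence. Using the identification $(L^p(w))^*=L^{p'}(w^{1-p'})$ under the pairing $\int fg\,dx$, it follows that $H_1^*\colon L^p(w)\to L^p(w)$ is bounded if and only if $H_1\colon L^{p'}(w^{1-p'})\to L^{p'}(w^{1-p'})$ is. A short computation---raise the $A_{p,0}$ condition for $w$ to the power $p'-1=\tfrac1{p-1}$ and use $(1-p)(1-p')=1$---shows that $w\in A_{p,0}$ if and only if $w^{1-p'}\in A_{p',0}$. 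Hence, applying Theorem~\ref{thm:DMRO-thm} with exponent $p'$ and weight $w^{1-p'}$ (together with $H_1g\le Ng$), we get the required bound for $H_1$ on $L^{p'}(w^{1-p'})$, and therefore for $H_1^*$ on $L^p(w)$. Adding the two estimates gives $\mc C=H_1+H_1^*\colon L^p(w)\to L^p(w)$; and since $Sf\le\mc C f$ for $f\ge0$, the same holds for $S$.

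\emph{Main obstacle.} Given Theorem~\ref{thm:DMRO-thm}, there is no serious difficulty; the only points requiring care are the rigorous justification of the adjoint identity for $H_1^*$ (which should first be established on a dense subclass of functions before invoking duality) and the elementary computation showing that the $A_{p,0}$ class is self-dual in the sense that $w\in A_{p,0}\iff w^{1-p'}\in A_{p',0}$.
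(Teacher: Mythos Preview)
Your proposal is correct and follows essentially the same route as the paper: necessity via the pointwise bound $Nf\le \mc C f$ combined with Theorem~\ref{thm:DMRO-thm}, and sufficiency via $H_1 f\le Nf$ together with the duality $w\in A_{p,0}\iff w^{1-p'}\in A_{p',0}$ to handle $H_1^*$, then the equivalence $Sf\sim \mc C f$. This is exactly the ``straightforward duality argument using the Hardy operators'' the paper attributes to~\cite{DMRO1} and carries out (in the variable exponent setting) in Section~\ref{secC-Calpha}.
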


\medskip

In this paper, our goal is to generalize these results in two ways.
First, we extend the approach in~\cite{DMRO2} to give a new proof
of Theorem~\ref{thm:andersen} using the maximal operator $N$.  We will do so
using a Hedberg type inequality~\cite{Hed}.  More importantly, we 
extend all of these results to the scale of variable Lebesgue spaces. These are a generalization of the classical Lebesgue spaces, with the
constant exponent $p$ replaced by an exponent function $\pp$.  They
were introduced by Orlicz~\cite{O} in 1931; harmonic analysis
on these spaces has been studied intensively for the past 25 years.
We refer the reader to the monographs~\cite{CUFbook, DHHR} for
a comprehensive history.

To state our results we first  introduce some basic definitions; for
more information we refer the reader to the above books and also
to~\cite{KR}.  Let $\Pp(0,\infty)$ denote the collection of bounded
measurable functions $ \pp :(0,\infty)\rightarrow [1,\infty)$.  For a
measurable subset $E$ of $(0,\infty)$, let
\[p^-_E=\inf_{x\in E} p(x), \quad p^+_E=\sup_{x\in E} p(x);\]
for brevity we will simply write  $p^-=p^-_{(0,\infty)}$ and $p^+=p^+_{(0,\infty)}$.
Thus, we can write
\[\Pp(0,\infty)=\{\pp :(0,\infty)\rightarrow [1,\infty) \textrm{ with } p^+<\infty\}.\]

As in the constant exponent case, define the conjugate exponent
$\cpp$ pointwise by
\[\frac{1}{p(x)}+\frac{1}{p'(x)}=1\]
on every $x\in (0,\infty)$. Notice that, if $p(x)=1$, then
$p'(x)=\infty$ so $\cpp\notin \Pp(0,\infty)$.  However, if $\pp\in \Pp(0,\infty)$ with
$p^->1$, then $\cpp \in \Pp(0,\infty)$.  

The variable Lebesgue space $L^{p(\cdot)}(0,\infty)$ is the set of
measurable functions $f$ such that the modular
\[\varrho_{p(\cdot)}(f):=\int_0^\infty |f(x)|^{p(x)} dx<\infty.\]
This becomes a Banach function space when equipped with the Luxemburg
norm defined by
\[\|f\|_{p(\cdot)}:=\inf\{\mu>0: \varrho_{p(\cdot)}(f/\mu)\leq 1\}.\]
If $\pp=p$ is constant, then $L^\pp(0,\infty)=L^p(0,\infty)$ with
equality of norms.

For our results we need to impose a regularity condition on $\pp$
at $0$ and at infinity.  

\begin{defn} \label{defn:log-holder}
Given $\pp\in \Pp(0,\infty)$, 
we say that  $\pp$ is log-H\"older continuous at the origin, and
denote this by $\pp\in LH_0(0,\infty)$, if there exist constants
$C_0>0$ and $p_0\ge 1$ such that
\[|p(x)-p_0|\leq \frac{C_0}{-\log (x)},\quad \text{for all } 0<x< 1/2.\]
We say that $\pp$ is log-H\"older continuous at infinity, and denote
it by $\pp\in LH_\infty(0,\infty)$, if there exist constants
$C_\infty>0$ and $p_\infty\geq 1$ such that
\[|p(x)-p_\infty|\leq \frac{C_\infty}{\log(e+x)},\quad  \text{for all }x\in (0,\infty).\]
\end{defn}

Observe that if $\pp\in LH_0(0,\infty)$, then
$p_0=\displaystyle\lim_{x\to 0^+}p(x)$, which allows us to define
$p(0)=p_0$.  Similarly, if $\pp \in LH_\infty(0,\infty)$, then
$p_\infty=\displaystyle\lim_{x\to\infty}p(x)$.  Moreover, if $p_->1$
then it is easy to see that $\pp\in LH_0(0,\infty)$ and
$\pp\in LH_\infty(0,\infty)$ imply $\cpp \in LH_0(0,\infty)$ and
$\cpp\in LH_\infty(0,\infty)$ with $(p')_\infty=(p_\infty)'$,
respectively.

For many results in harmonic analysis to be true in the variable
Lebesgue spaces, it is necessary to assume a stronger condition than
$LH_0(0,\infty)$.  Instead, we assume that the exponent $\pp$ is log-H\"older continuous at
every point in $(0,\infty)$:
\begin{equation} \label{eqn:LH0}
  |p(x)-p(y)|\leq \frac{C_0}{-\log (|x-y|)},
\quad \text{for all } 0<|x-y|< 1/2.
\end{equation}
However, for the Hardy operator, it was shown that this condition is
not necessary, and the weaker condition $LH_0(0,\infty)$ is sufficient:
see~\cite{DS}.

Given a weight  $w$--i.e., a non-negative, locally integrable
function on $(0,\infty)$ such that $0<w(x)<\infty$ a.e.--we define the weighted variable Lebesgue
space $L^\pp_w(0,\infty)$  as follows:  
$f\in L^{p(\cdot)}_w(0,\infty)$ if $fw\in L^{p(\cdot)}(0,\infty)$.
When $\pp=p$ is constant, this becomes the weighted Lebesgue space
$L^p(w^p)$.  (In other words, in the variable Lebesgue spaces we
define weights as multipliers rather than as measures.)   

Given a weight $w$ and an 
operator $T$, we say that $T$ is strong-type $(\pp,\qq)$ with respect
to $w$ if 
\[\| (T f)w\|_{q(\cdot)}\le K\| fw\|_{p(\cdot)};\]
equivalently, $T : L^\pp_w(0,\infty) \rightarrow L^\qq_w(0,\infty)$.
We say that $T$ is weak-type $(\pp,\qq)$ with respect to $w$ if 
for all $\mu>0$, 
\[\mu \|w\chi_{\{x\in (0,\infty): Tf(x)>\mu\}}\|_{q(\cdot)}\leq
  K\|fw\|_{p(\cdot)}.\]
Note that if $T$ is strong-type $(\pp,\qq)$ with respect to $w$, then
it is automatically of weak-type as well.  

The weights we consider are a generalization of the $A_{p,q,0}$
weights defined above.

\begin{defn}\label{Apq0-var}
  Given $0<\lambda \leq 1$ and $\pp\in \Pp(0,\infty)$ such that $p_+<\frac{1}{1-\lambda}$, define $\qq$ by  $1/q(x)=1/p(x)-(1-\lambda)$. We say that a weight $w\in A_{p(\cdot), q(\cdot),0}$ if there
  exists a constant $C>0$ such that for every $b>0$,
\[\|w\chi_{(0,b)}\|_{q(\cdot)}\|w^{-1}\chi_{(0,b)}\|_{p'(\cdot)}\leq
  Cb^\lambda.\]
If $\lambda=1$, then $\pp=\qq$ and we  write $w\in A_{p(\cdot),0}$.
\end{defn}

The $A_{p(\cdot), q(\cdot),0}$ condition is a weaker version of the
class $A_{\pp,\qq}$
introduced in \cite{BDP} (see also \cite{CUW}) to control weighted
norm inequalities for the fractional integral operator.  Similarly,
the $A_{p(\cdot), 0}$ condition is a weaker version of the
$A_{p(\cdot)}$ condition \cite{CUDH,CUFN} which governs weighted norm
inequalities for the maximal operator on weighted Lebesgue spaces.
When $\pp$ and $\qq$ are constant, then the $A_{\pp,\qq,0}$ condition
becomes the $A_{p,q,0}$ condition defined above.


We can now state our main results.  The first is for the maximal
operator $N$.  

\begin{thm}\label{strongtypeN}
  Given $\pp \in \Pp(0,\infty)$, suppose $\pp\in LH_0(0,\infty)\cap
  LH_\infty(0,\infty)$ and
  $1<p^-\le p^+<\infty$.  If $w$ is a weight on $(0,\infty)$, then the
  following are equivalent:

\begin{enumerate}[label=(\roman*)]
\item The maximal operator $N$ is of strong-type $(p(\cdot),p(\cdot))$ with respect to $w$.
\item The maximal operator $N$ is of weak-type $(p(\cdot),p(\cdot))$ with respect to $w$.
\item $w\in A_{p(\cdot),0}$.
\end{enumerate}
\end{thm}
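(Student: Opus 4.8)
The plan is to establish the cycle of implications $(i)\Rightarrow(ii)\Rightarrow(iii)\Rightarrow(i)$. The implication $(i)\Rightarrow(ii)$ is immediate, since a strong-type inequality always implies the corresponding weak-type inequality (as remarked after the definition of weak-type).

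For $(ii)\Rightarrow(iii)$, I would test the weak-type inequality against well-chosen functions. Fix $b>0$ and note that for any non-negative $f$ supported on $(0,b)$ and any $0<x<b$, we have $Nf(x)\geq \frac{1}{b}\int_0^b f$; choosing $\mu$ slightly smaller than this average forces $\{Nf>\mu\}\supseteq(0,b)$, so that $\frac{1}{b}\left(\int_0^b f\right)\|w\chi_{(0,b)}\|_{p(\cdot)}\lesssim \|fw\|_{p(\cdot)}$. Taking the supremum over all such $f$ and using the norm duality $\sup\left\{\int_0^b f\,g : \|fw\|_{p(\cdot)}\leq 1\right\}\sim \|g w^{-1}\chi_{(0,b)}\|_{p'(\cdot)}$ with $g=\chi_{(0,b)}$ yields $\|w\chi_{(0,b)}\|_{p(\cdot)}\|w^{-1}\chi_{(0,b)}\|_{p'(\cdot)}\lesssim b$, which is exactly $w\in A_{p(\cdot),0}$ (recall $\lambda=1$, $q(\cdot)=p(\cdot)$ here). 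The duality step requires $p^->1$ so that $p'(\cdot)\in\mathcal P(0,\infty)$; this is available by hypothesis.

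The main work is the implication $(iii)\Rightarrow(i)$: from $w\in A_{p(\cdot),0}$ deduce the strong-type bound for $N$. Here I would exploit the one-dimensional, "one-sided" structure of $N$. The key observation is that $Nf(x)=\sup_{b>x}\frac{1}{b}\int_0^b|f|$ is, up to equivalence, governed by dyadic averages over intervals $(0,2^k)$: indeed $Nf$ is a non-increasing function of $x$, and on $(0,\infty)$ it is comparable to the "dyadic" version $N_df(x)=\sup\{2^{-k}\int_0^{2^k}|f| : 2^k>x\}$. So it suffices to prove $\|(N_df)w\|_{p(\cdot)}\lesssim\|fw\|_{p(\cdot)}$. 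I would then run a Calderón–Zygmund / level-set decomposition adapted to the basis $\mathcal B=\{(0,b):b>0\}$: for the level sets $\Omega_k=\{N_df>2^k\}$, which are nested intervals $(0,b_k)$, one extracts a pairwise-disjoint "Calderón–Zygmund" collection and reduces the norm estimate to controlling $\sum_k 2^{kp(\cdot)}\chi_{E_k}$ against $f$, where $E_k=\Omega_k\setminus\Omega_{k+1}$. The $A_{p(\cdot),0}$ condition enters exactly as the substitute for the $A_{p(\cdot)}$ condition in the analogous argument for the Hardy–Littlewood maximal operator: it lets one pass from the average $\frac{1}{b_k}\int_0^{b_k}|f|$ to $\|w\chi_{(0,b_k)}\|_{p(\cdot)}^{-1}\|fw\chi_{(0,b_k)}\|_{p(\cdot)}$ via Hölder's inequality in $L^{p(\cdot)}$. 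To make the final summation work and to handle the modular-versus-norm subtleties of variable exponents, I would invoke the $LH_0$ and $LH_\infty$ hypotheses through the standard "inside-out" estimates for $\|\chi_{(0,b)}\|_{p(\cdot)}$ (for example the well-known bounds $\|\chi_E\|_{p(\cdot)}\sim |E|^{1/p(x)}$ for $x\in E$ when $|E|\lesssim 1$, and the analogous statements at infinity), which is precisely where log-Hölder continuity at $0$ and at $\infty$ is used.

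The hard part will be the quantitative control in the variable-exponent summation in $(iii)\Rightarrow(i)$: unlike the constant-exponent case one cannot simply raise things to a fixed power, so one must track the averages $\frac{1}{b}\int_0^b|f|$ through the $LH_0$/$LH_\infty$ estimates on $\|\chi_{(0,b)}\|_{p(\cdot)}$ uniformly in $b$, splitting into the regimes $b\leq 1$ and $b>1$ and possibly into whether the average itself is $\leq 1$ or $>1$. A clean way to organize this is to prove a Hedberg-type pointwise bound of the form $Nf(x)\lesssim (Mf(x))^{\theta}\|fw\|_{p(\cdot)}^{1-\theta}$ plus lower-order terms — mirroring the strategy the authors announce for reproving Theorem~\ref{thm:andersen} — and then use the known boundedness of the Hardy–Littlewood maximal operator $M$ on $L^{p(\cdot)}$; but since $N$ here maps $L^{p(\cdot)}_w$ to itself with the \emph{same} exponent, the more direct route is the Calderón–Zygmund decomposition sketched above, with $A_{p(\cdot),0}$ supplying exactly the reverse-Hölder-free input that makes the geometric-series summation over the level sets converge.
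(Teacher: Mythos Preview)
Your outline for $(i)\Rightarrow(ii)$ and $(ii)\Rightarrow(iii)$ is essentially the paper's argument. The gap is in $(iii)\Rightarrow(i)$.

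You correctly identify the level-set structure: $\{Nf>2^k\}=(0,b_k)$ with $b_{k+1}\le b_k/2$, and you set up the decomposition over $E_k=[b_{k+1},b_k)$. But the mechanism you propose for summing---``pass from $\frac{1}{b_k}\int_0^{b_k}|f|$ to $\|w\chi_{(0,b_k)}\|_{p(\cdot)}^{-1}\|fw\chi_{(0,b_k)}\|_{p(\cdot)}$ via H\"older, then use $LH_0/LH_\infty$ estimates on $\|\chi_{(0,b)}\|_{p(\cdot)}$''---does not close. Once you apply H\"older you have thrown away all $k$-dependence of $f$ (only $\|fw\|_{p(\cdot)}\le 1$ survives), so you are left trying to bound $\sum_k\int_{E_k}\|w\chi_{(0,b_k)}\|_{p(\cdot)}^{-p(x)}w(x)^{p(x)}\,dx$, and there is no geometric decay here: each summand is essentially of size $1$. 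The ``reverse-H\"older-free input'' you allude to does not exist in this form; $A_{p(\cdot),0}$ alone does not produce a summable series without re-injecting $f$.

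What the paper actually does is quite different in its engine. First it splits $f=f_1+f_2$ according to whether $f\sigma^{-1}>1$ or $\le 1$, where $\sigma=w^{-p'(\cdot)}$; this is the device that lets one compare $p(y)$-powers with $p_k^-$- or $p_\infty$-powers inside the average. Second, it proves (as a separate lemma) that $W=w^{p(\cdot)}$ and $\sigma$ belong to an $A_{\infty,0}$ class for the basis $\{(0,b)\}$, which gives $\sigma(0,b_k)\lesssim\sigma(E_k)$ and is what makes the sets $E_k$ carry enough $\sigma$-mass. Third---and this is the point your sketch misses entirely---the summation is controlled not by a geometric series but by the boundedness of the \emph{weighted} maximal operator $N_\sigma g(x)=\sup_{b>x}\sigma(0,b)^{-1}\int_0^b|g|\,d\sigma$ on $L^{p^-}(d\sigma)$ (for $f_1$) and on $L^{p_\infty}(d\sigma)$ (for $f_2$). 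The $LH_\infty$ hypothesis enters through a modular exchange lemma (swapping $p(x)$ for $p_\infty$ at the cost of an integrable error $(e+x)^{-tp^-}$), not through estimates on unweighted norms $\|\chi_{(0,b)}\|_{p(\cdot)}$. Your proposal would need all three of these ingredients to become a proof.
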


\begin{rem}
In the proof of Theorem~\ref{strongtypeN} we do not need to assume the
log-H\"older continuity conditions in order to prove the necessity of
the $A_{\pp,0}$ condition.  This raises the question of whether there
are weaker conditions on $\pp$ so that the $A_{\pp,0}$ condition is
also sufficient.  A similar question has been asked for the
Hardy-Littlewood maximal operator:  see~\cite{CUFN,L}.
\end{rem}

Theorem~\ref{strongtypeN} is the heart of our work.  Our
proof is adapted from the proof of the boundedness of the
Hardy-Littlewood maximal operator on weighted variable Lebesgue spaces
in~\cite{CUFN}. However, the fact that $N$ is an operator on the half-line
introduces a number of technical obstacles that were not present in
that proof.  

Given Theorem~\ref{strongtypeN} we can deduce the
following result that characterizes the weights controlling the
boundedness of the generalized Calder\'on operator $\mc C_\lambda$ and
the generalized Stieltjes transform $S_\lambda$ using a Hedberg type
inequality.

\begin{thm}\label{strongtypeS-C}
  Given  $0<\lambda \leq 1$ and $\pp \in \Pp(0,\infty)$, suppose 
  $\pp\in LH_0(0,\infty)\cap LH_\infty(0,\infty)$ and
  $1<p^-\le p^+<\frac{1}{1-\lambda}$.   Define $\qq\in \Pp(0,\infty)$  by
  $1/q(x)=1/p(x)-(1-\lambda)$. If $w$ is a weight on
  $(0,\infty)$, then  the following are equivalent:
\begin{enumerate}[label=(\roman*)]
\item The operator $\mc C_\lambda$ is of strong-type $(p(\cdot),q(\cdot))$ with respect to $w$.
\item The operator $S_\lambda$ is of strong-type $(p(\cdot),q(\cdot))$ with respect to $w$.
\item The operator $\mc C_\lambda$ is of weak-type $(p(\cdot),q(\cdot))$ with respect to $w$.
\item The operator $S_\lambda$ is of weak-type $(p(\cdot),q(\cdot))$ with respect to $w$.
\item $w\in A_{p(\cdot),q(\cdot),0}$.
\end{enumerate}
\end{thm}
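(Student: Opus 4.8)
The plan is to deduce Theorem~\ref{strongtypeS-C} from Theorem~\ref{strongtypeN} by exploiting the pointwise comparisons between the operators and the structure $\mc C_\lambda = H_\lambda + H_\lambda^*$. Since $S_\lambda f \sim \mc C_\lambda f$ for non-negative $f$, and since the weak-type and strong-type inequalities for $S_\lambda$ and $\mc C_\lambda$ transfer to one another through this equivalence, it suffices to prove the chain (i)$\Rightarrow$(iii)$\Rightarrow$(v)$\Rightarrow$(i), together with the trivial equivalences $(i)\Leftrightarrow(ii)$ and $(iii)\Leftrightarrow(iv)$ that follow from the comparability of the two operators. The implication (i)$\Rightarrow$(iii) is immediate. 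So the real content is (iii)$\Rightarrow$(v) (necessity of the $A_{\pp,\qq,0}$ condition) and (v)$\Rightarrow$(i) (sufficiency).

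\textbf{Necessity, (iii)$\Rightarrow$(v).} Here I would test the weak-type inequality for $\mc C_\lambda$ on functions of the form $f = w^{-p'(\cdot)}\chi_{(0,b)}$ (suitably normalized, or its truncations if it fails to lie in $\Lp_w$), and evaluate $\mc C_\lambda f$ on the interval $(0,b)$. For $x \in (0,b)$ one has $\mc C_\lambda f(x) \geq x^{-\lambda}\int_0^x f \gtrsim$ (something), but the cleaner route is to use the second term $\int_x^\infty f(y)y^{-\lambda}\,dy \geq b^{-\lambda}\int_x^b f$; choosing $x$ near $0$ this is $\gtrsim b^{-\lambda}\int_0^b w^{-p'(\cdot)}$. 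Then the weak-type estimate at the appropriate level $\mu$, combined with the norm-modular relations in $\Lq$ and the standard duality identity $\|w\chi_{(0,b)}\|_{q(\cdot)}\|w^{-1}\chi_{(0,b)}\|_{p'(\cdot)} \sim \| w^{-p'(\cdot)/q(\cdot)}\chi_{(0,b)}\|$-type computations, should reproduce $\|w\chi_{(0,b)}\|_{q(\cdot)}\|w^{-1}\chi_{(0,b)}\|_{p'(\cdot)} \lesssim b^\lambda$. Care is needed because $q(x) \geq p(x)$, so the exponents on the two sides differ; I expect to use the pointwise relation $1/q(x) = 1/p(x) - (1-\lambda)$ to convert modular computations and to invoke that $\pp \in LH_0 \cap LH_\infty$ forces the relevant averages of $w^{\pm}$ over $(0,b)$ to behave like powers of $b$ uniformly — this is where the log-H\"older hypotheses enter. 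As the remark after Theorem~\ref{strongtypeN} notes for $N$, the necessity direction may actually not need the regularity of $\pp$, but I would not rely on that and would carry the log-H\"older estimates through.

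\textbf{Sufficiency, (v)$\Rightarrow$(i).} This is where the Hedberg-type inequality advertised in the introduction does the work. The strategy: show that $w \in A_{\pp,\qq,0}$ implies first that $w \in A_{\pp,0}$ in a suitable sense — more precisely, using $1/q(x) = 1/p(x)-(1-\lambda)$ one checks that the $A_{\pp,\qq,0}$ balance condition at scale $b$ forces $N$ to be bounded on $\Lp_w$ (by Theorem~\ref{strongtypeN}, after verifying the $A_{\pp,0}$ condition for $w$ relative to $\pp$, possibly after the change of variables/weight that absorbs the fractional gain). Then, for non-negative $f$ with $\|fw\|_{\pp} \leq 1$, I would prove a pointwise bound of Hedberg type: $\mc C_\lambda f(x) \lesssim \big(Nf(x)\big)^{p(x)/q(x)}\cdot(\text{a term controlled by the } A_{\pp,\qq,0}\text{ constant at the scale } x)$, splitting $\mc C_\lambda f(x)$ into the Hardy piece $x^{-\lambda}\int_0^x f$ and the tail piece $\int_x^\infty f(y)y^{-\lambda}\,dy$, decomposing the tail dyadically into annuli $(2^j x, 2^{j+1}x)$, estimating $\int$ over each annulus by H\"older in the variable exponent space against $w$ and $w^{-1}$, summing the resulting geometric-type series (the $A_{\pp,\qq,0}$ condition gives the uniform control needed for summability, and $\lambda>0$ or the log-H\"older decay gives the geometric ratio), and finally comparing the Hardy piece to $Nf(x)$. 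From such a pointwise inequality, $\|(\mc C_\lambda f)w\|_{\qq} \lesssim \|(Nf)^{p(\cdot)/q(\cdot)} w\|_{\qq}$ up to constants, and a modular computation using $q(x)\cdot\frac{p(x)}{q(x)} = p(x)$ converts this to $\|(Nf)w^{q(\cdot)/p(\cdot)}\cdots\|$ — here one must be careful about the weight exponent, but the point is that the $\qq$-norm of $(Nf)^{p(\cdot)/q(\cdot)}$ reduces to a power of the $\pp$-norm of $Nf$, which is $\lesssim 1$ by boundedness of $N$.

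\textbf{Main obstacle.} I expect the hardest part to be making the Hedberg-type pointwise inequality genuinely rigorous in the variable-exponent weighted setting: the exponent $p(x)/q(x) = 1 - (1-\lambda)p(x)$ varies with $x$, so the ``interpolation'' between the $L^1$-type quantity $Nf(x)$ and the norm $\|fw\|_{\pp}$ cannot be done by a clean H\"older inequality with fixed exponents, and one must control how $p$ oscillates across the dyadic annuli $(2^j x, 2^{j+1} x)$ — this is exactly where $LH_0$ (for $x$ near $0$ and small annuli) and $LH_\infty$ (for $x$ large and far annuli) are indispensable, giving that $p$ is essentially constant on each annulus up to a harmless factor. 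Keeping the constants in the annulus estimates uniform and summable, while simultaneously tracking the $A_{\pp,\qq,0}$ constant at the correct scale, is the delicate bookkeeping that the proof must get right; everything else (the trivial equivalences, the transfer between $S_\lambda$ and $\mc C_\lambda$, and the reduction of $N$-boundedness to Theorem~\ref{strongtypeN}) is routine by comparison.
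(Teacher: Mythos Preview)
Your outline has the right skeleton, but the sufficiency direction \textit{(v)}$\Rightarrow$\textit{(i)} misses the key simplification the paper uses, and as written the proposed argument would not close.

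First, the claim that ``$w\in A_{\pp,\qq,0}$ implies $w\in A_{\pp,0}$'' is not the right reduction. When $\lambda<1$ the correct change of weight and exponent is $w\in A_{\pp,\qq,0}\Longleftrightarrow w^{1/\lambda}\in A_{\lambda\qq,0}$ (this is the paper's Lemma~\ref{clases}), so one does not apply Theorem~\ref{strongtypeN} with the original $\pp$ and $w$ at all.

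Second, and more importantly, the Hedberg inequality you aim for, $\mc C_\lambda f(x)\lesssim \big(Nf(x)\big)^{p(x)/q(x)}\cdot(\ldots)$ with a \emph{variable} outer exponent, is precisely what the paper avoids. The paper's Hedberg step (Lemma~\ref{Hedberg}) is a single H\"older inequality with exponents $1/\lambda$ and $1/(1-\lambda)$ that gives
\[
S_\lambda\!\left(\tfrac{f}{w}\right)(x)\le \Big[S\big(g^{1/\lambda}\big)(x)\Big]^{\lambda}\Big(\int_0^\infty f^{p(y)}\,dy\Big)^{1-\lambda},\qquad g=f^{p(\cdot)/q(\cdot)}w^{-1},
\]
with a \emph{constant} exponent $\lambda$ on the outside. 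This converts the problem into the strong-type $(\lambda\qq,\lambda\qq)$ inequality for $S=S_1$ with the transformed weight $w^{1/\lambda}$, to which Lemma~\ref{clases} and the already-established case $\lambda=1$ apply. For $\lambda=1$ itself the paper does not use any dyadic tail decomposition either: since $w\in A_{\pp,0}$ gives $w^{-1}\in A_{\cpp,0}$, Theorem~\ref{strongtypeN} yields $N$ (hence $H$) bounded on both $L^{\pp}_w$ and $L^{\cpp}_{w^{-1}}$, and duality then gives $H^*$ bounded on $L^{\pp}_w$, so $\mc C=H+H^*$ is bounded.

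Your ``main obstacle'' --- tracking the oscillation of $p(x)/q(x)$ across dyadic annuli to make a variable-exponent Hedberg summable --- is real, and you have no mechanism in the sketch to overcome it (splitting at the fixed point $x$ and dyadically decomposing $H_\lambda^*$ does not produce the exponent $p(x)/q(x)$ on $Nf$; that requires an optimization over the splitting radius, which in the variable setting does not cleanly give a pointwise bound). The paper sidesteps this obstacle entirely by pushing the variable exponents into the function $g$ and keeping the outer exponent constant. Your necessity argument is fine in spirit; the paper uses a duality test function $f=\chi_{(0,b)}w^{-1}g$ rather than $w^{-p'(\cdot)}\chi_{(0,b)}$, which avoids any integrability worries, but either route works.
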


As a consequence of Theorem~\ref{strongtypeS-C} we immediately get
weighted norm inequalities for $\mc C^\alpha$ and $S^\alpha$.  
Since $\mc C^\alpha\lesssim \mc C$, we  have that the
$A_{\pp,0}$ weights are sufficient for the 
boundedness of $\mc C^\alpha$ for
any $\alpha\geq 0$.  Surprisingly, $w\in A_{p(\cdot),0}$ is also
necessary, and it does not depend
on $\alpha$.
  
\begin{thm}\label{Calpha}
  Given $\pp \in \Pp(0,\infty)$, suppose  $\pp\in LH_0(0,\infty)\cap LH_\infty(0,\infty)$ and
  $1<p^-\leq p^+<\infty$.  If $w$ is a weight on $(0,\infty)$, then  the
  following statements are equivalent:
\begin{enumerate}[label=(\roman*)]
\item \label{wAp0}$w\in A_{p(\cdot),0}$.
\item There exists $\alpha\geq 0$ such that $\mc C^\alpha$ is of strong-type $(p(\cdot),p(\cdot))$ with respect to $w$.
\item For every $\alpha\geq 0$, $\mc C^\alpha$ is of strong-type $(p(\cdot),p(\cdot))$ with respect to $w$.
\item \label{weakCalpha}There exists $\alpha\geq 0$ such that $\mc C^\alpha$ is of weak-type $(p(\cdot),p(\cdot))$ with respect to $w$.
\item For every $\alpha\geq 0$, $\mc C^\alpha$ is of weak-type $(p(\cdot),p(\cdot))$ with respect to $w$.	
\end{enumerate}
Since $S^\alpha f\sim \mc C^\alpha f$, the same equivalence is true
with $\mc C^\alpha$ replaced by $S^\alpha$.
\end{thm}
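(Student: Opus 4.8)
The plan is to deduce Theorem~\ref{Calpha} from Theorem~\ref{strongtypeS-C} in the case $\lambda=1$ (so that $\qq=\pp$), together with the pointwise comparisons stated in the introduction. Recall that we have $I^\alpha f \lesssim H_1 f$, $J^\alpha f \lesssim H_1^* f$, hence $\mc C^\alpha f = I^\alpha f + J^\alpha f \lesssim H_1 f + H_1^* f = \mc C f = \mc C_1 f$ for non-negative $f$, and that $S^\alpha f \sim \mc C^\alpha f$. So the engine of the proof is already available; what remains is to organize the implications into a cycle and, crucially, to establish the one genuinely new direction, namely that weak-type boundedness of $\mc C^\alpha$ for a \emph{single} $\alpha\ge 0$ forces $w\in A_{\pp,0}$.

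First I would handle the easy implications. The implication \ref{wAp0}$\Rightarrow$(iii) is immediate: if $w\in A_{\pp,0}$, then by Theorem~\ref{strongtypeS-C} with $\lambda=1$, $\mc C=\mc C_1$ is strong-type $(\pp,\pp)$ with respect to $w$, and since $\mc C^\alpha f \lesssim \mc C f$ pointwise for non-negative $f$ (and $\mc C^\alpha$ is sublinear, so we may reduce to $f\ge 0$ by replacing $f$ with $|f|$), we get $\|(\mc C^\alpha f) w\|_{p(\cdot)} \lesssim \|(\mc C f)w\|_{p(\cdot)} \lesssim \|fw\|_{p(\cdot)}$ for every $\alpha\ge 0$. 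The implications (iii)$\Rightarrow$(ii) and (v)$\Rightarrow$(iv) are trivial (``for every'' implies ``there exists''), and (ii)$\Rightarrow$\ref{weakCalpha} and (iii)$\Rightarrow$(v) follow since strong-type always implies weak-type. By symmetry the same chain works with $\mc C^\alpha$ replaced by $S^\alpha$, using $S^\alpha f \sim \mc C^\alpha f$; this observation also lets us freely pass between the two operators at the end.

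The main obstacle is the remaining implication \ref{weakCalpha}$\Rightarrow$\ref{wAp0}: assuming only that $\mc C^\alpha$ is weak-type $(\pp,\pp)$ with respect to $w$ for some fixed $\alpha\ge0$, deduce $w\in A_{\pp,0}$. The natural strategy is to extract the $A_{\pp,0}$ testing condition directly from the weak-type inequality by testing on $f = w^{-\cpp}\chi_{(0,b)}$ (or a suitable truncation thereof to guarantee integrability), mirroring the standard necessity argument for Muckenhoupt conditions. The key point is that, for $x\in(0,b)$, the Riemann--Liouville piece $I^\alpha f(x) = \frac{\alpha+1}{x^{\alpha+1}}\int_0^x (x-y)^\alpha f(y)\,dy$ must be bounded below by a constant (depending on $\alpha$) times $\frac1b\int_0^b f$ on a fixed-proportion subinterval of $(0,b)$ — for instance, on $(b/2, b)$ the factor $(x-y)^\alpha$ for $y\in(0,b/4)$ is comparable to $b^\alpha$, and $x^{\alpha+1}\sim b^{\alpha+1}$, so $I^\alpha f(x) \gtrsim_\alpha \frac1b\int_0^{b/4} f(y)\,dy$. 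Setting $\mu$ to be a small constant times $\frac1b\int_0^{b/4} f$, the level set $\{\mc C^\alpha f > \mu\}$ contains $(b/2,b)$ (indeed also $(0, b/4)$ after a parallel lower bound, which is not even needed), and the weak-type inequality yields $\frac1b\int_0^{b/4} w^{-\cpp} \cdot \|w\chi_{(b/2,b)}\|_{p(\cdot)} \lesssim \|w^{-\cpp+1}\chi_{(0,b)}\|_{p(\cdot)} = \|w^{-\cpp/p'}\chi_{(0,b)}\|_{p(\cdot)}$ after using $w^{(-\cpp+1)p} = w^{-\cpp}$ pointwise; converting modular estimates to norm estimates via the standard relations for $L^{p(\cdot)}$ (Lemma-type facts about $\varrho_{p(\cdot)}$, valid because $1<p^-\le p^+<\infty$), and then replacing $b$ by $4b$ and using a doubling-type argument (here is where $LH_0\cap LH_\infty$ enters, guaranteeing $\|w^{-\cpp}\chi_{(0,b)}\|\sim\|w^{-\cpp}\chi_{(0,b/4)}\|$ up to controllable constants, or more simply $\|w\chi_{(0,b)}\|\sim\|w\chi_{(b/2,b)}\|+\|w\chi_{(0,b/2)}\|$ and an iteration), gives exactly $\|w\chi_{(0,b)}\|_{p(\cdot)}\|w^{-1}\chi_{(0,b)}\|_{p'(\cdot)}\lesssim b$, i.e.\ $w\in A_{\pp,0}$. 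The delicate part is the passage between modulars and norms on variable Lebesgue spaces over the half-line and making sure the ``doubling'' of the intervals $(0,b)$ costs only a uniform constant; the log-H\"older hypotheses are used precisely to control these, exactly as in the analogous steps of the proof of Theorem~\ref{strongtypeN}. Once \ref{weakCalpha}$\Rightarrow$\ref{wAp0} is established, the cycle \ref{wAp0}$\Rightarrow$(iii)$\Rightarrow$(v)$\Rightarrow$\ref{weakCalpha}$\Rightarrow$\ref{wAp0} closes all five equivalences, and the final sentence about $S^\alpha$ follows from $S^\alpha f\sim\mc C^\alpha f$.
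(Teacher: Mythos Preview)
Your cycle of easy implications is correct and matches the paper. The difficulty, as you identify, is \ref{weakCalpha}$\Rightarrow$\ref{wAp0}, and here your sketch has a genuine gap.

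Testing with $f=w^{-\cpp}\chi_{(0,b)}$ and bounding $I^\alpha f$ from below on $(b/2,b)$ only gives you control of $\|w\chi_{(b/2,b)}\|_{\pp}$ against (essentially) $\sigma(0,b/4)$. To reach the $A_{\pp,0}$ condition you then need $\|w\chi_{(0,b)}\|_{\pp}\lesssim \|w\chi_{(b/2,b)}\|_{\pp}$, and your proposed ``doubling'' or ``iteration'' does not deliver this: the comparison $\|w\chi_{(0,b)}\|_{\pp}\lesssim \|w\chi_{(b/2,b)}\|_{\pp}$ is precisely a consequence of $w\in A_{\pp,0}$ (cf.\ Lemma~\ref{Ainftynorm}), so invoking it here is circular. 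Summing the dyadic pieces does not close either, since each application of your inequality at scale $b/2^k$ controls $\|w\chi_{(b/2^{k+1},b/2^k)}\|_{\pp}$ against $\|w^{-1}\chi_{(0,b/2^{k+2})}\|_{\cpp}$, not against the fixed $\|w^{-1}\chi_{(0,b)}\|_{\cpp}$. The log-H\"older hypotheses do not help here; in fact the necessity direction requires no regularity on $\pp$ at all.

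The paper closes the gap with a two-step testing argument that avoids both the modular--norm conversion and the circular doubling. First, rather than $f=w^{-\cpp}\chi_{(0,b)}$, take $f=\chi_{(0,b)}gw^{-1}$ where $g\ge 0$, $\|g\|_{\pp}\le 1$, is a dualizing function with $\int_0^b gw^{-1}\sim \|w^{-1}\chi_{(0,b)}\|_{\cpp}$; then for $t\in(2b,3b)$ the Riemann--Liouville piece gives $\mc C^\alpha f(t)\gtrsim \tfrac{1}{b}\|w^{-1}\chi_{(0,b)}\|_{\cpp}=:\mu$, and the weak-type inequality yields
\[
\|w\chi_{(2b,3b)}\|_{\pp}\,\|w^{-1}\chi_{(0,b)}\|_{\cpp}\lesssim b.
\]
Second---and this is the missing idea---test with the plain characteristic $f=\chi_{(2b,3b)}$. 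For $t\in(0,b)$ the Weyl piece gives $J^\alpha f(t)\ge c_\alpha>0$, so $(0,b)\subset\{\mc C^\alpha f>c_\alpha\}$ and the weak-type inequality gives
\[
\|w\chi_{(0,b)}\|_{\pp}\lesssim \|w\chi_{(2b,3b)}\|_{\pp}.
\]
Combining the two displays yields $A_{\pp,0}$ directly. The point is that the needed ``doubling'' comparison is itself extracted from the weak-type bound by a second test function supported \emph{away} from the origin and exploiting $J^\alpha$, not assumed from regularity of $\pp$ or $w$.
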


Since we also have that $\mc C_\lambda = H_\lambda  + H_\lambda^*$, as
an immediate consequence of Theorem~\ref{strongtypeS-C} we get
weighted bounds for the Hardy operators.

\begin{thm}\label{thm:hardy-op}
  Given $0<\lambda \leq 1$ and $\pp \in \Pp(0,\infty)$, suppose 
  $\pp\in LH_0(0,\infty)\cap LH_\infty(0,\infty)$ and
  $1<p^-\le p^+<\frac{1}{1-\lambda}$.    Define $\qq\in \Pp(0,\infty)$  by
  $1/q(x)=1/p(x)-(1-\lambda)$. If $w$ is a weight on
  $(0,\infty)$, then  the following are equivalent:
\begin{enumerate}[label=(\roman*)]
\item The operators $H_\lambda$ and $H_\lambda^*$ are of strong-type $(p(\cdot),q(\cdot))$ with respect to $w$.
\item The operators $H_\lambda$ and $H_\lambda^*$ are of weak-type $(p(\cdot),q(\cdot))$ with respect to $w$.
\item $w\in A_{p(\cdot),q(\cdot),0}$.
\end{enumerate}
\end{thm}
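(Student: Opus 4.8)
The plan is to derive Theorem~\ref{thm:hardy-op} directly from Theorem~\ref{strongtypeS-C}, using the decomposition $\mc{C}_\lambda = H_\lambda + H_\lambda^*$ together with the pointwise estimates $H_\lambda f \le \mc{C}_\lambda f$ and $H_\lambda^* f \le \mc{C}_\lambda f$, which hold for every non-negative measurable $f$ on $(0,\infty)$ (these are exactly the bounds recorded in the introduction). Since the Luxemburg norm $\|\cdot\|_{\qq}$ is monotone with respect to pointwise a.e.\ inequalities between non-negative functions, these two ingredients let us transfer boundedness back and forth between $\mc{C}_\lambda$ and the pair $H_\lambda,\,H_\lambda^*$: the domination by $\mc{C}_\lambda$ handles the passage from $\mc{C}_\lambda$ to the Hardy operators, and the identity $\mc{C}_\lambda = H_\lambda + H_\lambda^*$ handles the reverse passage.

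First I would prove $(iii)\Rightarrow(i)$. Suppose $w\in A_{\pp,\qq,0}$. By the implication $(v)\Rightarrow(i)$ of Theorem~\ref{strongtypeS-C}, $\mc{C}_\lambda$ is of strong-type $(\pp,\qq)$ with respect to $w$, say with constant $K$. Given $f\in L^\pp_w(0,\infty)$, linearity of $H_\lambda$ gives $|H_\lambda f|\le H_\lambda|f|\le \mc{C}_\lambda|f|$ a.e., so
\[\|(H_\lambda f)w\|_{\qq}\le \|(\mc{C}_\lambda|f|)w\|_{\qq}\le K\,\||f|w\|_{\pp}=K\,\|fw\|_{\pp},\]
and the identical computation with $H_\lambda^*$ in place of $H_\lambda$ gives the same bound for $H_\lambda^*$. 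Hence $(i)$ holds. The implication $(i)\Rightarrow(ii)$ is immediate, since a strong-type estimate trivially implies the corresponding weak-type estimate.

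For $(ii)\Rightarrow(iii)$, assume both $H_\lambda$ and $H_\lambda^*$ are of weak-type $(\pp,\qq)$ with respect to $w$, with constant $K$. Fix $\mu>0$ and a non-negative $f$. From $\mc{C}_\lambda f = H_\lambda f + H_\lambda^* f$ we get
\[\{x:\mc{C}_\lambda f(x)>\mu\}\subseteq \{x:H_\lambda f(x)>\mu/2\}\cup\{x:H_\lambda^* f(x)>\mu/2\},\]
so $w\chi_{\{\mc{C}_\lambda f>\mu\}}\le w\chi_{\{H_\lambda f>\mu/2\}}+w\chi_{\{H_\lambda^* f>\mu/2\}}$ a.e.; applying the triangle inequality for $\|\cdot\|_{\qq}$ and the two weak-type hypotheses yields $\mu\,\|w\chi_{\{\mc{C}_\lambda f>\mu\}}\|_{\qq}\le 4K\,\|fw\|_{\pp}$, i.e.\ $\mc{C}_\lambda$ is of weak-type $(\pp,\qq)$ with respect to $w$. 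Then the implication $(iii)\Rightarrow(v)$ of Theorem~\ref{strongtypeS-C} gives $w\in A_{\pp,\qq,0}$, closing the cycle. The final remark about $S_\lambda$ in the statement of Theorem~\ref{strongtypeS-C} is not needed here.

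I do not anticipate any genuine obstacle: the entire substance of the result is already contained in Theorem~\ref{strongtypeS-C}, and the only points requiring (routine) care are the monotonicity of the Luxemburg norm under the pointwise domination by $\mc{C}_\lambda$, the reduction to non-negative $f$ via $|H_\lambda f|\le H_\lambda|f|$, and the elementary level-set splitting used to pass from the two Hardy operators to their sum in the weak-type step.
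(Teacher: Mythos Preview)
Your argument is correct and matches the paper's approach exactly: the paper states that Theorem~\ref{thm:hardy-op} follows as an immediate consequence of Theorem~\ref{strongtypeS-C} via the decomposition $\mc C_\lambda = H_\lambda + H_\lambda^*$, and you have simply spelled out those details. The pointwise domination for $(iii)\Rightarrow(i)$ and the level-set splitting for $(ii)\Rightarrow(iii)$ are precisely what is implicit in the paper's one-line justification.
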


One-weight norm inequalities for the Hardy operators in the variable Lebesgue spaces do not
appear to have been considered before now.  For two-weight
inequalities, see Mamedov, {\em et al.}~\cite{CUM,HM,
  MH09,MH10,MZ, MCMO}.  These results are not
immediately comparable to ours, even in the one-weight case, since
they assume log-H\"older continuity conditions that depend on the
weight.  See~\cite{CUM} for a discussion of cases where this
condition overlaps with our regularity assumptions.

\begin{rem} \label{rem:ctr-example}
It is  tempting to conjecture that either the strong or weak type
inequality for only one of the operators $H_\lambda$ or $H_\lambda^*$
implies the $A_{\pp,\qq,0}$ condition.  However, this is not true even
in the constant exponent case.  For simplicity we will show this when
$p=2$ and $\lambda=1$, but our example can easily be modified to work
for any $p$ and $\lambda$.   By~\cite{M1}, a necessary and
sufficient condition for $H_1$ to be bounded on $L^2(w)$ is that
\begin{equation} \label{eqn:Muck-suff}
\sup_{r>0} \int_r^\infty \frac{w(x)}{x^2}\,dx
\int_0^r w(x)^{-1}\,dx < \infty.  
\end{equation}
Let 
\[ w(x) =
\begin{cases}
1 & 0 < x \leq 1 \\
e^{-x} & x>1.
\end{cases}
\]
This weight satisfies \eqref{eqn:Muck-suff}.  Indeed, if $r\leq 1$, then 
\[ \int_r^\infty \frac{w(x)}{x^2}\,dx
\int_0^r w(x)^{-1}\,dx
\leq
\int_r^\infty \frac{dx}{x^2} \int_0^r dx = \frac{1}{r}\cdot r = 1. \]
And if $r>1$, the left-hand side is dominated by
\[ \int_r^\infty \frac{e^{-x}}{x^2}\,dx 
\int_0^r e^x\,dx 
\leq 
\frac{e^{-r}}{r} e^r \leq 1. \]

On the other hand, $w\not\in A_{2,0}$, since for every $r>1$,
\[ \frac{1}{r}\int_0^r w(x)\,dx \frac{1}{r}\int_0^r w(x)^{-1}\,dx
\geq
\frac{1}{r}\int_1^r e^{-x}\,dx \frac{1}{r}\int_1^r e^x\,dx 
=
\frac{e^{-1}-e^{-r}}{r^2} (e^r-1), \]
and the right-hand side is unbounded as $r\rightarrow \infty$. 

For
a related instance in which the $A_{p,0}$ condition is sufficient but
not necessary, see~\cite{AM}.
\end{rem}

\medskip

We now give two applications of Theorem~\ref{strongtypeS-C}.  More
precisely, we will give an application of a generalization of this
theorem to higher dimensions.    If we replace $(0,\infty)$ by $\R^n$,
then we may define the variable Lebesgue space $\Lp(\R^n)$ exactly as
above.  We define log-H\"older continuity as in
Definition~\ref{defn:log-holder}, replacing $x$ by $|x|$ on the
right-hand side of each inequality.  Finally, we say that a weight $w\in A_{\pp,0}$ if for all $b>0$,
\[\|w\chi_{B(0,b)}\|_{p(\cdot)}\|w^{-1}\chi_{B(0,b)}\|_{p'(\cdot)}\leq Cb^n.\]

For a measurable function $f$ on
$\R^n$, define the radial operators
\begin{equation}\label{NRn} Nf(x) = \sup_{b>|x|} \frac{1}{b^n}\int_{B(0,b)}|f(y)|\,dy, 
\end{equation}
and
\[Sf(x)=\int_{\mbb R^n} \frac{f(y)}{|x|^n+|y|^n}\,dy, \quad x\in \mbb{R}^n.\]
Then we can modify the proofs of Theorems~\ref{strongtypeN}
and~\ref{strongtypeS-C} to get the following result.

\begin{thm} \label{thm:n-dim}
Given $\pp \in \Pp(\R^n)$, suppose $\pp\in LH_0(\R^n)\cap
LH_\infty(\R^n)$ and $1<p_-\leq p_+<\infty$.  If  $w$ is a weight on
$\R^n$, then the following are equivalent:
\begin{enumerate}[label=(\roman*)]
\item $N$ is strong $(\pp,\pp)$ with respect to $w$;
\item $S$ is strong $(\pp,\pp)$ with respect to $w$;
\item $w\in A_{\pp,0}$.
\end{enumerate}
\end{thm}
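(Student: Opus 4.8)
The plan is to reduce the $n$-dimensional statement to the one-dimensional results already established, namely Theorems~\ref{strongtypeN} and~\ref{strongtypeS-C} with $\lambda=1$, $\pp=\qq$. The key observation is that all three objects in the theorem — the operator $N$ defined by \eqref{NRn}, the operator $S$, and the class $A_{\pp,0}$ on $\R^n$ — depend on $f$, $w$, and $\pp$ only through their radial structure, in the precise sense that each quantity is built from integrals over balls $B(0,b)$ centered at the origin and from the radial variable $|x|$. So I would first set up a change of variables to polar coordinates that transports the problem to the half-line. Concretely, for a function $F$ on $\R^n$ write $|x|=r$; the substitution $r = s^{1/n}$ (so that $dr = \tfrac1n s^{1/n-1}\,ds$, i.e.\ $r^{n-1}\,dr = \tfrac1n\,ds$) turns an integral $\int_{\R^n} G(|x|)\,dx = \omega_{n-1}\int_0^\infty G(r) r^{n-1}\,dr$ into $\tfrac{\omega_{n-1}}{n}\int_0^\infty G(s^{1/n})\,ds$, where $\omega_{n-1}$ is the surface measure of the unit sphere. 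Under this substitution, $B(0,b)$ corresponds to the interval $(0,b^n)$ on the $s$-line, so the supremum $\sup_{b>|x|}$ becomes a supremum over intervals $(0,\beta)$ with $\beta > |x|^n$, which is exactly the half-line maximal operator $N$ from the introduction evaluated at the point $|x|^n$.

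The next step is to make this precise for non-radial $f$ and $w$. Given $f$ on $\R^n$, its spherical average $\tilde f(r) = \fint_{S^{n-1}} |f(r\theta)|\,d\sigma(\theta)$ satisfies $\frac{1}{b^n}\int_{B(0,b)}|f| = \frac{\omega_{n-1}}{n b^n}\int_0^b \tilde f(r) n r^{n-1}\,dr$, so $Nf(x)$ depends on $f$ only through $\tilde f$. However, $w$ need not be radial, and the norm $\|w\chi_{B(0,b)}\|_{\pp}$ does not reduce to a norm of a radial function in an equally clean way. The cleanest route is therefore: (a) prove the implications among (i), (ii), (iii) directly, mirroring the one-dimensional proofs, rather than literally conjugating them. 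For (iii)$\Rightarrow$(i): the proof of Theorem~\ref{strongtypeN} uses only the structure of the basis $\{(0,b)\}$, the log-H\"older conditions at $0$ and $\infty$, and the $A_{\pp,0}$-type testing condition; each ingredient has a verbatim $n$-dimensional analogue with $(0,b)$ replaced by $B(0,b)$, the Diening/Cruz-Uribe–type modular estimates holding in $\R^n$ just as on the line. For (i)$\Rightarrow$(iii): insert test functions $f = w^{-p'(\cdot)}\chi_{B(0,b)}$ (suitably truncated) into the strong-type inequality for $N$, exactly as in the one-dimensional necessity argument, which as the remark notes does not use log-H\"older continuity. For the equivalence with $S$: one uses $2^{-1}\mc C f \le S f \le \mc C f$ in the form $S f(x) = \int_{\R^n} f(y)/(|x|^n+|y|^n)\,dy$ versus the $n$-dimensional Calderón-type operator $\frac{1}{|x|^n}\int_{B(0,|x|)} f + \int_{\R^n\setminus B(0,|x|)} f(y)/|y|^n\,dy$, together with $Nf \le \mc C f$ and a duality argument identical to the one deducing Theorem~\ref{thm:DMRO-calderon-thm} from Theorem~\ref{thm:DMRO-thm}, using that the adjoint of the $n$-dimensional Hardy operator $H f(x) = |x|^{-n}\int_{B(0,|x|)} f$ is again an operator of the same radial type whose boundedness is governed by the dual $A_{\cpp,0}$ condition, and that $w\in A_{\pp,0}\iff w^{-1}\in A_{\cpp,0}$ by the norm conjugate duality $\|g\|_{\pp}\sim \sup\{\int g h : \|h\|_{\cpp}\le 1\}$.

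The main obstacle I anticipate is purely bookkeeping rather than conceptual: in the $n$-dimensional setting the weight $w$ is a genuine function on $\R^n$, not a function of $|x|$, so while the \emph{operators} $N$ and $S$ see only radial averages, the \emph{weighted norms} $\|(Tf)w\|_{\pp}$ and the testing condition $\|w\chi_{B(0,b)}\|_{\pp}\|w^{-1}\chi_{B(0,b)}\|_{\cpp}$ involve $w$ on all of $B(0,b)$. This means one cannot literally substitute $s=r^n$ and quote the one-dimensional theorem as a black box; instead one must re-run its proof with $\R^n$-balls in place of half-line intervals. I expect every step of that re-run to go through with only notational changes — the log-H\"older hypotheses on $\R^n$ supply exactly the same control on $\|\chi_{B(0,b)}\|_{\pp}$ and on the oscillation of $\pp$ over balls that their one-dimensional counterparts supplied over intervals, and the doubling/covering properties of the basis $\{B(0,b)\}$ with respect to Lebesgue measure on $\R^n$ are identical to those of $\{(0,b)\}$ on the line. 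Hence the honest statement of the proof is: ``the arguments proving Theorems~\ref{strongtypeN} and~\ref{strongtypeS-C} carry over \emph{mutatis mutandis}, replacing intervals $(0,b)$ by balls $B(0,b)$, the factor $b$ by $b^n$, and one-dimensional modular estimates by their $n$-dimensional versions,'' and I would present it at that level of detail, spelling out only the two or three places (the Hedberg-type inequality, the duality identifying the adjoint Hardy operator, the test-function computation) where the exponent $n$ enters the arithmetic in a way a reader might want checked.
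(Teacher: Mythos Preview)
Your proposal is correct and takes essentially the same approach as the paper: after abandoning the polar-coordinate reduction (rightly, since $w$ need not be radial), you propose to re-run the one-dimensional proofs of Theorems~\ref{strongtypeN} and~\ref{strongtypeS-C} with intervals $(0,b)$ replaced by balls $B(0,b)$ and $b$ by $b^n$, which is exactly what the paper does. The one ingredient the paper singles out for explicit verification, which you do not mention, is the boundedness of the weighted maximal operator $N_\sigma f(x)=\sup_{b>|x|}\sigma(B(0,b))^{-1}\int_{B(0,b)}|f|\,d\sigma$ on $L^p(\R^n,d\sigma)$ for $p>1$; in one dimension this was quoted from~\cite{DMRO1}, and in $\R^n$ the paper supplies the short argument (weak $(1,1)$ from the nested structure of the balls, then Marcinkiewicz interpolation).
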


The first application of Theorem~\ref{thm:n-dim} is a weighted version
of Hilbert's inequality:  
for $p>1$ and non-negative functions $f,g$, 
\[\int_0^\infty \int_0^\infty \frac{f(x)g(y)}{x+y}\,dxdy
\leq 
C_p\|f\|_{L^p(0,\infty)}\|g\|_{L^{p'}(0,\infty)},\]
which was first proved by G. Hardy and M. Riesz~\cite{H} (also
see~\cite[Chapter IX]{HLP}).

\begin{thm}\label{Hilbertineq}
  Given $\pp\in \Pp(\R^n)$, suppose
  $\pp\in LH_0(\mbb{R}^n)\cap LH_\infty (\mbb{R}^n)$ and
  $1<p^-\leq p^+<\infty$. Then there exists $C>0$ such that for any non-negative functions
  $f,\,g$, 
  $f\in L^{p(\cdot)}_w(\mbb{R}^n)$ and   $g\in L^{p'(\cdot)}_{w^{-1}}(\mbb{R}^n)$, 
  independent of $f$ and $g$,
	\begin{equation}\label{Hilbert}
	\int_{\mbb R^n} \int_{\mbb R^n}
        \frac{f(x)g(y)}{|x|^n+|y|^n}\,dxdy
\leq 
C \|fw\|_{p(\cdot)}\|gw^{-1}\|_{p'(\cdot)}
	\end{equation} 
 if and only if $w\in A_{p(\cdot),0}$.
\end{thm}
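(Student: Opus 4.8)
The plan is to deduce Theorem~\ref{Hilbertineq} from Theorem~\ref{thm:n-dim} by a standard duality argument, recognizing the double integral in \eqref{Hilbert} as a pairing against the operator $S$. Concretely, by Fubini's theorem the left-hand side of \eqref{Hilbert} equals $\int_{\mbb R^n} Sf(y)\, g(y)\,dy$, where $Sf(y)=\int_{\mbb R^n}\frac{f(x)}{|x|^n+|y|^n}\,dx$. Since $S$ is formally self-adjoint with respect to the (unweighted) $L^2$ pairing, it does not matter which variable we integrate out. Thus \eqref{Hilbert} is equivalent to the assertion that
\[
\int_{\mbb R^n} Sf(y)\,g(y)\,dy \leq C\,\|fw\|_{p(\cdot)}\,\|gw^{-1}\|_{p'(\cdot)}
\]
for all non-negative $f\in L^{p(\cdot)}_w$ and $g\in L^{p'(\cdot)}_{w^{-1}}$.

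First I would prove the ``if'' direction. Assume $w\in A_{p(\cdot),0}$. Write the integral as $\int_{\mbb R^n}(Sf\cdot w)\,(g w^{-1})\,dy$ and apply the generalized Hölder inequality in variable Lebesgue spaces (see~\cite{CUFbook,DHHR}), which gives
\[
\int_{\mbb R^n} Sf(y)\,g(y)\,dy \leq C\,\|(Sf)w\|_{p(\cdot)}\,\|g w^{-1}\|_{p'(\cdot)}.
\]
Now Theorem~\ref{thm:n-dim} applied with the exponent $\pp$ and the weight $w$ — here $p^+<\infty$ and $\lambda=1$ so $\qq=\pp$, and $w\in A_{p(\cdot),0}$ is exactly hypothesis (iii) — shows that $S$ is strong $(\pp,\pp)$ with respect to $w$, i.e. $\|(Sf)w\|_{p(\cdot)}\leq C\|fw\|_{p(\cdot)}$. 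Combining the two displays yields \eqref{Hilbert}. One should note the small point that $f\ge 0$ is harmless since $S$ preserves non-negativity and we may as well assume $f$ is in the natural domain; the constant $C$ is independent of $f,g$ because it is a product of the Hölder constant and the operator norm from Theorem~\ref{thm:n-dim}.

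For the ``only if'' direction, suppose \eqref{Hilbert} holds for all admissible $f,g$. By the converse (norm-conjugate formula / associate space duality) for variable Lebesgue spaces — namely $\|h\|_{p(\cdot)}\sim\sup\{\int hg : \|g\|_{p'(\cdot)}\leq 1\}$, valid since $1<p^-\le p^+<\infty$ — we can, for fixed non-negative $f$, take the supremum over $g$ with $\|gw^{-1}\|_{p'(\cdot)}\le 1$ on the left-hand side of the equivalent bound $\int Sf\cdot g \le C\|fw\|_{p(\cdot)}\|gw^{-1}\|_{p'(\cdot)}$. This recovers $\|(Sf)w\|_{p(\cdot)}\leq C\|fw\|_{p(\cdot)}$ for all non-negative $f$, i.e. $S$ is strong $(\pp,\pp)$ with respect to $w$. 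Then Theorem~\ref{thm:n-dim} (the implication (ii)$\Rightarrow$(iii)) gives $w\in A_{p(\cdot),0}$, as desired. The main obstacle — really the only nontrivial input — is the duality/associate-space identity for weighted variable Lebesgue spaces used to pass from the bilinear estimate back to the operator norm; this is where one must be careful that the relevant norm-conjugate formula holds under the assumption $1<p^-\le p^+<\infty$ (so that $\cpp\in\Pp(\R^n)$ and the space is reflexive-type), but this is standard and available in~\cite{CUFbook,DHHR}. Everything else is bookkeeping with Fubini and generalized Hölder.
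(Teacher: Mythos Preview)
Your proposal is correct and follows essentially the same approach as the paper: recognize the double integral as the pairing $\int Sf\cdot g$, and use the duality/H\"older identity for variable Lebesgue spaces to see that \eqref{Hilbert} is equivalent to $\|(Sf)w\|_{p(\cdot)}\le C\|fw\|_{p(\cdot)}$, which by Theorem~\ref{thm:n-dim} is equivalent to $w\in A_{p(\cdot),0}$. The paper presents this as a single chain of equivalences rather than splitting into two directions, but the content is the same.
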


Theorem~\ref{Hilbertineq} appears to be new, even in the constant
exponent case.  When $n=1$ it is implicit in~\cite{DMRO1}.

\begin{rem}
The sharp constant in Hilbert's inequality is
$\frac{\pi}{\sin(\pi/p)}$; this is due to J. Schur~\cite{Sch}. Here,
we are  not concerned with finding the best
constant.  However, this is an interesting problem, especially in the
constant exponent case where there has been a great deal of work on
sharp constants
related to the so-called $A_2$ conjecture.  See, for
instance,~\cite{Hy}. 
\end{rem}

The second application of Theorem~\ref{thm:n-dim} is to the
continuity of certain integral operators on variable Lebesgue spaces.
Given an index set $J$, let $\{T_j\}_{j\in J}$ be a family of
(singular) integral
operators defined by
\[T_jf(x)=pv \int_{\mbb{R}^n}K_j(x,y)f(y)\, dy\]
where each $K_j$ satisfies a decay estimate,
\begin{equation}\label{boundKj}
|K_j(x,y)|\leq \frac{C_0}{|x-y|^n}, \quad x\neq y,
\end{equation}
with $C_0$ independent of $j\in J$.  We are interested in the
boundedness of the associated maximal operator 
\[T^*f(x)=\sup\limits_{j\in J} |T_jf(x)|.\]

These operators were first considered by Soria and Weiss
in \cite{SW}.  They prove that $T^* : L^p(w) \rightarrow L^p(w)$
provided that $w$ is an $A_p$ weight that is 
essentially constant over dyadic
annuli.  More precisely, they assume that
there exists a constant $C_1>0$ such that
\begin{equation}\label{constannuli}
\sup\limits_{2^{k-2}\leq |x|\leq 2^{k+1}} w(x)
\leq C_1\inf\limits_{2^{k-2}\leq |x|\leq 2^{k+1}} w(x),
\quad k\in \mbb{Z}.
\end{equation}

We can extend their result to the variable Lebesgue spaces.

\begin{thm}\label{SoriaWeiss}
Let $\{T_j\}_{j\in J}$, $T^*$ be defined as above.  Given $\pp \in
\Pp(\R^n)$ suppose  $\pp\in LH_0(\mbb{R}^n)\cap LH_\infty (\mbb{R}^n)$,
$1<p^-\leq p^+<\infty$, and for every family of balls $\mc{B}$ with
bounded overlap,
\begin{equation}\label{classG}
	\sum\limits_{B\in \mc B}
        \|f\chi_B\|_{p(\cdot)}\|g\chi_B\|_{p'(\cdot)}
\leq 
C \|f\|_{p(\cdot)}\|g\|_{p'(\cdot)},
\end{equation}
where the constant $C$ is independent of $\mc{B}$ and only depends on
$\pp$ and the bound on the overlap. If $T^*$ is of strong type
$(p(\cdot),p(\cdot))$, and if $w\in A_{p(\cdot),0}$ and
satisfies~\eqref{constannuli}, then $T^*$ is of strong type
$(p(\cdot),p(\cdot))$ with respect to $w$.
\end{thm}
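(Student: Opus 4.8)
The plan is to adapt the scheme of Soria and Weiss~\cite{SW} to the variable exponent setting: decompose $T^*$ into a \emph{local} and a \emph{global} part relative to a dyadic annular decomposition of $\R^n$, bound the global part by the radial Stieltjes operator $S$ (for which Theorem~\ref{thm:n-dim} supplies a weighted estimate since $w\in A_{\pp,0}$), and bound the local part by transferring the unweighted boundedness of $T^*$, exploiting that $w$ is essentially constant on dyadic annuli by~\eqref{constannuli}. Concretely, I would set $A_k=\{x\in\R^n:2^{k-1}\le|x|<2^k\}$, so the $A_k$ are disjoint with union $\R^n\setminus\{0\}$, and fatten them to $\tilde A_k=\{x:2^{k-2}\le|x|\le 2^{k+1}\}$, so that $A_k\subseteq\tilde A_k$ and the family $\{\tilde A_k\}$ has overlap at most three. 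For $x\in A_k$ write $T_jf=T_j(f\chi_{\tilde A_k})+T_j(f\chi_{\R^n\setminus\tilde A_k})$ and correspondingly $T^*f\le T^*_{\mathrm{loc}}f+T^*_{\mathrm{glob}}f$, where, with $k(x)$ the index with $x\in A_{k(x)}$,
\[
T^*_{\mathrm{loc}}f(x)=\sup_{j}\bigl|T_j(f\chi_{\tilde A_{k(x)}})(x)\bigr|,\qquad
T^*_{\mathrm{glob}}f(x)=\sup_{j}\bigl|T_j(f\chi_{\R^n\setminus\tilde A_{k(x)}})(x)\bigr|.
\]

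For the global part, if $x\in A_k$ and $y\notin\tilde A_k$ then, treating the cases $|y|<2^{k-2}$ and $|y|>2^{k+1}$ separately, one gets $|x-y|\gtrsim(|x|^n+|y|^n)^{1/n}$; hence by~\eqref{boundKj} we have $|K_j(x,y)|\lesssim(|x|^n+|y|^n)^{-1}$ with a constant independent of $j$. Therefore $T^*_{\mathrm{glob}}f(x)\lesssim S(|f|)(x)$ pointwise, and Theorem~\ref{thm:n-dim} gives $\|(T^*_{\mathrm{glob}}f)\,w\|_{\pp}\lesssim\|S(|f|)\,w\|_{\pp}\lesssim\|fw\|_{\pp}$.

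For the local part, note $T^*_{\mathrm{loc}}f(x)=\sum_k\chi_{A_k}(x)\,T^*(f\chi_{\tilde A_k})(x)$. By the duality between $L^{p(\cdot)}$ and $L^{p'(\cdot)}$ it suffices to bound $\sum_k\int_{A_k}w\,T^*(f\chi_{\tilde A_k})\,h\,dx$ over non-negative $h$ with $\|h\|_{\cpp}\le1$. By~\eqref{constannuli}, $w$ is comparable on $\tilde A_k$ to the constant $w_k=\inf_{\tilde A_k}w$, so $w\le C_1 w_k$ on $A_k$ and $w_k\le w$ on $\tilde A_k$; combining this with Hölder's inequality in $L^{p(\cdot)}$, the unweighted strong $(p(\cdot),p(\cdot))$ bound for $T^*$, and $A_k\subseteq\tilde A_k$ gives
\begin{align*}
\int_{A_k}w\,T^*(f\chi_{\tilde A_k})\,h\,dx
&\lesssim w_k\,\|T^*(f\chi_{\tilde A_k})\|_{\pp}\,\|h\chi_{A_k}\|_{\cpp}\\
&\lesssim w_k\,\|f\chi_{\tilde A_k}\|_{\pp}\,\|h\chi_{\tilde A_k}\|_{\cpp}
\le \|(fw)\chi_{\tilde A_k}\|_{\pp}\,\|h\chi_{\tilde A_k}\|_{\cpp}.
\end{align*}
Summing over $k$ and invoking~\eqref{classG} for the bounded-overlap family $\{\tilde A_k\}$ — legitimate after covering each $\tilde A_k$ by a bounded number of balls of radius $\sim 2^k$, chosen so that the resulting ball family still has bounded overlap — yields $\sum_k\int_{A_k}w\,T^*(f\chi_{\tilde A_k})\,h\,dx\lesssim\|fw\|_{\pp}\|h\|_{\cpp}\le\|fw\|_{\pp}$, hence $\|(T^*_{\mathrm{loc}}f)\,w\|_{\pp}\lesssim\|fw\|_{\pp}$. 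Adding the two estimates shows $T^*$ is of strong type $(p(\cdot),p(\cdot))$ with respect to $w$.

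I expect the local estimate to be the crux. In the constant exponent setting the summation ``$\sum_k a_kb_k\lesssim\|\cdot\|_{\pp}\|\cdot\|_{\cpp}$'' is merely Hölder for series together with bounded overlap; in the variable exponent setting it genuinely requires~\eqref{classG}, which is precisely why that hypothesis is imposed. A secondary, routine but nontrivial, point is extending~\eqref{classG} from balls to the dyadic annuli $\tilde A_k$ with all geometric constants uniform in $k$. By contrast, the global part is essentially automatic once one observes the off-diagonal decay $|K_j(x,y)|\lesssim(|x|^n+|y|^n)^{-1}$ and appeals to Theorem~\ref{thm:n-dim}.
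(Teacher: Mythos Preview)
Your proposal is correct and follows essentially the same approach as the paper: the same dyadic annular decomposition $A_k\subset\tilde A_k$, the same local/global splitting of $T^*$, the same pointwise domination of the global part by $S$ via the kernel bound~\eqref{boundKj} followed by Theorem~\ref{thm:n-dim}, and the same duality argument for the local part using~\eqref{constannuli}, the unweighted boundedness of $T^*$, and the summation condition~\eqref{classG}. Your observation that~\eqref{classG} is stated for balls and must be transferred to the annuli $\tilde A_k$ is a point the paper glosses over, so your treatment is, if anything, slightly more careful.
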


Theorem~\ref{SoriaWeiss} is new, but this question has also been
considered by Bandaliev \cite{B,B2}.  However, his results
have  different hypotheses
on $\pp$ and the weights, and his proofs rely on  other techniques.  

\begin{rem}
  The summation condition \eqref{classG} was introduced by
  Berezhnoi~\cite{Ber} in the study of Banach function spaces.
  In~\cite{DHHR} this condition was shown to be very closely related
  to the boundedness of the Hardy-Littlewood maximal operators and
  singular integrals on the variable Lebesgue spaces.  Thus it is a
  very reasonable assumption in the context of
  Theorem~\ref{SoriaWeiss}.  As shown in~\cite[Theorem~7.3.22]{DHHR}, this condition holds if $\pp\in LH(\R^n)\cap LH_\infty(\R^n)$, where
  $LH(\R^n)$ is the local log-H\"older condition defined
  by~\eqref{eqn:LH0}.
\end{rem}

\begin{rem}
In the constant exponent case, Theorem~\ref{SoriaWeiss} appears to be
a generalization of the original result of Soria and Weiss, since we
only assume $w\in A_{p,0}$ whereas they assume the stronger condition
$w\in A_p$.  However, given the additional
assumption~\eqref{constannuli},
these two conditions are the same: Clearly, we always have
$A_p\subset A_{p,0}$.  Conversely, given $w\in A_{p,0}$ that
satisfies~\eqref{constannuli}, fix any ball $B=B(x,r)$.  If
$r>|x|/2$, then $B\subset \bar{B}= B(0,s)$, $s=|x|+r$, and 
$|B|\sim |\bar{B}|$.  Hence,
\[ \frac{1}{|B|}\int_B w\,dx 
\left(\frac{1}{|B|}\int_B w^{1-p'}\,dx\right)^{p-1}
\lesssim 
\frac{1}{|\bar{B}|} \int_{\bar{B}} w\,dx 
\left(\frac{1}{|\bar{B}|} \int_{\bar{B}}
    w^{1-p'}\,dx\right)^{p-1} \leq C. \]
On the other hand, if $r\leq |x|<2$, and $k\in \mathbb Z$ is such that $2^{k-1}\leq |x| <2^{k}$, then
for any $y \in B$, $2^{k-2}\leq |y| <2^{k+1}$, and so $w$ is
essentially constant on $B$, so the $A_p$ condition holds on $B$.
\end{rem}

The remainder of this paper is organized as follows.  In Section
\ref{lemmas} we state and prove a number of technical lemmas on the
exponents $p(\cdot)$ and the weights $A_{p(\cdot), q(\cdot),0}$ that
we will use in the proofs of our main results.  The proof of Theorem
\ref{strongtypeN} is in Section \ref{secN}, and the proofs of
Theorems~\ref{strongtypeS-C} and~\ref{Calpha} are in
Section~\ref{secC-Calpha}.  Finally, Section \ref{app} contains the
proof of Theorems~\ref{thm:n-dim},~\ref{Hilbertineq}
and~\ref{SoriaWeiss}.

\section{Technical results}\label{lemmas}

In this section we establish some properties of log-H\"older
continuous exponents and $A_{\pp,0}$ weights that we will use in our
main proofs.  We begin with two lemmas that allow us to apply the
$LH_0$ and $LH_\infty$ conditions.  The first is a version of
\cite[Lemma~3.2]{Diening} (see also \cite{CUFbook}) to the basis of
intervals $\{(0,b)\}_{b>0}$.

\begin{lem}\label{equivPlog0}
  Given $\pp \in \Pp(0,\infty)$, suppose $\pp\in LH_0(0,\infty)$.
  Then there exists $C>0$ such that for every $b>0$,
\[b^{p^-_{(0,b)}-p^+_{(0,b)}}\leq C.\]
\end{lem}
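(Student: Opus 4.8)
The plan is to split the argument according to the size of $b$, since the $LH_0$ hypothesis only controls $p(x)$ for $x<1/2$. Write $D_b=p^+_{(0,b)}-p^-_{(0,b)}\ge 0$, so that $b^{p^-_{(0,b)}-p^+_{(0,b)}}=b^{-D_b}=\exp\big(D_b\,(-\log b)\big)$. For $b\ge 1$ we have $-\log b\le 0$ and $D_b\ge 0$, so the exponent is non-positive and $b^{p^-_{(0,b)}-p^+_{(0,b)}}\le 1$; this case is immediate. It remains to treat $0<b<1$, where $-\log b>0$.

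First I would handle the main range $0<b<1/2$. For every $x\in(0,b)$ we have $0<x<b<1/2$, hence $-\log x>-\log b>0$, and the $LH_0$ condition gives $|p(x)-p_0|\le \frac{C_0}{-\log x}<\frac{C_0}{-\log b}$. Taking the supremum and the infimum over $x\in(0,b)$ yields $p^+_{(0,b)}\le p_0+\frac{C_0}{-\log b}$ and $p^-_{(0,b)}\ge p_0-\frac{C_0}{-\log b}$, so $D_b\le \frac{2C_0}{-\log b}$. Multiplying by $-\log b>0$ gives $D_b\,(-\log b)\le 2C_0$, and therefore $b^{p^-_{(0,b)}-p^+_{(0,b)}}=\exp\big(D_b(-\log b)\big)\le e^{2C_0}$.

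For the remaining range $1/2\le b<1$ there is no logarithmic blow-up to control: here $0<-\log b\le \log 2$, while $D_b\le p^+-p^-\le p^+-1<\infty$, so $D_b\,(-\log b)\le (p^+-1)\log 2$ and $b^{p^-_{(0,b)}-p^+_{(0,b)}}\le 2^{p^+-1}$. Combining the three cases, the claim holds with $C=\max\{1,\,e^{2C_0},\,2^{p^+-1}\}$.

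As for the main obstacle: there is no deep difficulty here. The only point requiring care is that the $LH_0$ estimate is available only for arguments in $(0,1/2)$, so the endpoint range $1/2\le b<1$ (and the trivial range $b\ge 1$) must be dispatched separately using the crude bounds $D_b\le p^+-1$ and $-\log b\le\log 2$. The key quantitative mechanism is the cancellation $\frac{C_0}{-\log b}\cdot(-\log b)=C_0$, which is precisely how log-H\"older continuity at the origin turns into control of powers of $b$.
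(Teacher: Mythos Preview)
Your proof is correct and follows essentially the same approach as the paper's: the core estimate $p^+_{(0,b)}-p^-_{(0,b)}\le \frac{2C_0}{-\log b}$ for $0<b<1/2$, obtained from the $LH_0$ condition, is identical, and the remaining range is handled by the crude bound $D_b\le p^+-p^-$. The only cosmetic difference is that the paper merges your two easy cases $b\ge 1$ and $1/2\le b<1$ into the single case $b\ge 1/2$ (observing $b^{-D_b}\le (1/2)^{-D_b}\le 2^{p^+-p^-}$), and phrases the sup/inf bound via an $\epsilon$-argument rather than your direct ``for all $x\in(0,b)$'' estimate.
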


\begin{proof}
Fix  $\pp\in LH_0(0,\infty)$. Since $p^-_{(0,b)}-p^+_{(0,b)}\leq 0$, we can
  assume that   $0<b<1/2$. For if $b \geq 1/2$, then
\[b^{p^-_{(0,b)}-p^+_{(0,b)}}\leq (1/2)^{p^-_{(0,b)}-p^+_{(0,b)}}\leq (1/2)^{p^--p^+}=2^{p^+-p^-}.\]

Fix $0<b<1/2$. We will bound the difference
$p^-_{(0,b)}-p^+_{(0,b)}$. From the definition of $p^-_{(0,b)}$, given
any $\epsilon>0$, there exists $0<x_\epsilon<b<1/2$ such that
$0\leq p(x_\epsilon)-p^-_{(0,b)}<\epsilon$. Consequently,
\[0\leq p(0)-p^-_{(0,b)}\leq
  |p(0)-p(x_\epsilon)|+p(x_\epsilon)-p^-_{(0,b)}<\frac{C_0}{-\log(x_\epsilon)}+\epsilon\leq
  \frac{C_0}{-\log(b)}+\epsilon, \] 
and if we let $\epsilon\rightarrow 0$, we get
\[0\leq p(0)-p^-_{(0,b)}\leq \frac{C_0}{-\log(b)}.\]
Similarly, we have that 
\[0\leq p^+_{(0,b)}-p(0)\leq \frac{C_0}{-\log(b)}.\] 
Therefore,
\[0\leq p^+_{(0,b)}-p^-_{(0,b)}\leq \frac{2C_0}{-\log(b)}.\]
Now, since $1/b>2$,
\[b^{p^-_{(0,b)}-p^+_{(0,b)}}
=
(1/b)^{p^+_{(0,b)}-p^-_{(0,b)}}
\leq (1/b)^{\frac{2C_0}{-\log(b)}}
=b^{\frac{2C_0}{\log(b)}}=e^{2C_0}.\]
If we take $C=\max\{2^{p^+-p^-}, e^{2C_0}\}$, we get the desired inequality.
\end{proof}

The next result allows us to estimate the modular
$\varrho_{p(\cdot)}(f)$ by means of the modular
$\varrho_{p_\infty}(f)$ whenever $\pp\in LH_\infty(0,\infty)$. This
result is from~\cite[Lemma~2.7]{CUFN}, but as they noted there, the
proof is identical to the case with Lebesgue
measure~\cite[Lemma~3.26]{CUFbook}.

\begin{lem}\label{changeexp}
Given $\pp \in \Pp(0,\infty)$, suppose  $\pp\in LH_\infty(0,\infty)$.
Fix a set $G\subset (0,\infty)$ and  a non-negative measure $\mu$. Then, for every
  $t>1/p^-$, there exists a positive constant $C_t=C(t,C_\infty)$ such
  that for all functions $g$ with $0\leq g(x)\leq 1$,
 \[\int_G g(x)^{p(x)}d\mu(x)
\leq C_t\int_G g(x)^{p_\infty} d\mu(x) 
+\int_G \frac{1}{(e+x)^{tp^-_G}} d\mu(x),\]
 and
 \[\int_G g(x)^{p_\infty}d\mu(x)
\leq C_t\int_G g(x)^{p(x)} d\mu(x) 
+\int_G \frac{1}{(e+x)^{tp^-_G}} d\mu(x).\]
\end{lem}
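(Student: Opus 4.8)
The plan is to derive both inequalities from pointwise estimates. I claim there is a constant $C_t$, depending only on $t$ and $C_\infty$, such that for every $x\in G$ and every $g$ with $0\le g(x)\le 1$,
\[
g(x)^{p(x)}\le C_t\,g(x)^{p_\infty}+\frac{1}{(e+x)^{tp^-_G}}
\qquad\text{and}\qquad
g(x)^{p_\infty}\le C_t\,g(x)^{p(x)}+\frac{1}{(e+x)^{tp^-_G}};
\]
integrating these over $G$ with respect to $\mu$ gives the lemma. The only consequence of the $LH_\infty$ hypothesis I will use is its equivalent formulation $(e+x)^{|p(x)-p_\infty|}\le e^{C_\infty}$ for all $x>0$, obtained by exponentiating $|p(x)-p_\infty|\log(e+x)\le C_\infty$. (The restriction $t>1/p^-$ plays no role in these inequalities; it only guarantees that the error term is integrable against Lebesgue measure.)

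For the first pointwise estimate I would fix $x\in G$ and split into two cases. If $g(x)^{p(x)}\le (e+x)^{-tp^-_G}$ the estimate is immediate. Otherwise, since $0\le g(x)\le 1$ and $p(x)\ge p^-_G>0$ we have $g(x)^{p^-_G}\ge g(x)^{p(x)}>(e+x)^{-tp^-_G}$, so $g(x)>(e+x)^{-t}$ and hence $g(x)^{-1}<(e+x)^{t}$. Comparing the two powers of $g(x)$: if $p(x)\ge p_\infty$ then $g(x)^{p(x)}\le g(x)^{p_\infty}$; if $p(x)<p_\infty$ then
\[
\frac{g(x)^{p(x)}}{g(x)^{p_\infty}}=g(x)^{-(p_\infty-p(x))}<(e+x)^{\,t(p_\infty-p(x))}\le e^{tC_\infty}.
\]
In either case $g(x)^{p(x)}\le e^{tC_\infty}g(x)^{p_\infty}$, which settles the first estimate once $C_t\ge e^{tC_\infty}$.

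For the second pointwise estimate I would split instead according to whether $g(x)^{p_\infty}\le (e+x)^{-tp^-_G}$; if so, we are done. If not, then $g(x)^{-1}<(e+x)^{tp^-_G/p_\infty}$. If $p(x)\le p_\infty$ then $g(x)^{p_\infty}\le g(x)^{p(x)}$, while if $p(x)>p_\infty$ then
\[
\frac{g(x)^{p_\infty}}{g(x)^{p(x)}}=g(x)^{-(p(x)-p_\infty)}<(e+x)^{\frac{tp^-_G}{p_\infty}(p(x)-p_\infty)}\le e^{\,tC_\infty p^-_G/p_\infty},
\]
using $(p(x)-p_\infty)\log(e+x)\le C_\infty$. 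It remains to bound $p^-_G/p_\infty$ by a quantity depending only on $C_\infty$. If $p^-_G\le p_\infty$ the ratio is at most $1$. If $p^-_G>p_\infty$, then every $y\in G$ satisfies $p(y)\ge p^-_G>p_\infty$, and since $\log(e+y)\ge 1$ the $LH_\infty$ bound gives $p(y)-p_\infty\le C_\infty$; taking the infimum over $y\in G$ yields $p^-_G-p_\infty\le C_\infty$, hence $p^-_G/p_\infty\le 1+C_\infty$ because $p_\infty\ge 1$. Thus $g(x)^{p_\infty}\le e^{tC_\infty(1+C_\infty)}g(x)^{p(x)}$, and $C_t=e^{tC_\infty(1+C_\infty)}$ (which also dominates $e^{tC_\infty}$) works for both estimates.

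I expect the one genuinely delicate point to be exactly this last step: in the Lebesgue-measure version over all of $\R^n$ one automatically has $p^-\le p_\infty$, so the exponent $\tfrac{tp^-_G}{p_\infty}(p(x)-p_\infty)$ causes no difficulty, whereas here $G$ need not reach infinity and $p^-_G$ may exceed $p_\infty$, so one must use $\log(e+y)\ge 1$ on $(0,\infty)$ to keep $p^-_G/p_\infty$ controlled by $1+C_\infty$.
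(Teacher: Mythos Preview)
Your proof is correct and follows the standard pointwise case-splitting argument that the paper cites from \cite{CUFN} and \cite{CUFbook} (the paper itself gives no proof, only the reference). Your observation that $p^-_G$ may exceed $p_\infty$ when $G$ does not reach infinity, and your use of $\log(e+y)\ge 1$ to bound $p^-_G/p_\infty\le 1+C_\infty$, correctly handles the one point where the general-$G$ statement is not entirely automatic from the $G=(0,\infty)$ case.
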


\medskip

In the next series of results, we establish the properties of
$A_{p(\cdot),0}$ weights.  These are similar to the properties of the
$A_\pp$ weights established in~\cite[Section~3]{CUFN}, which in turn
are related to the properties of the Muckenhoupt $A_p$ weights.

\begin{lem}\label{Ainftynorm}
  Given $\pp\in\Pp(0,\infty)$, if $w\in A_{p(\cdot),0}$, then there exists $C>0$
  such that for any $b>0$ and any measurable set $E\subset (0,b)$,
\[\frac{|E|}{b}=\frac{|E|}{|(0,b)|}
\leq C \frac{\|w\chi_E\|_{p(\cdot)}}{\|w\chi_{(0,b)}\|_{p(\cdot)}}.\]
\end{lem}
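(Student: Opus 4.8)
The plan is to use the generalized Hölder inequality in variable Lebesgue spaces together with the $A_{p(\cdot),0}$ condition to produce the claimed bound. First I would recall the estimate $|E| = \int_E \chi_E \, dx = \int_E w \cdot w^{-1}\, dx \leq C\|w\chi_E\|_{p(\cdot)} \|w^{-1}\chi_E\|_{p'(\cdot)}$, which follows from Hölder's inequality for the exponent pair $(\pp,\cpp)$ (valid since $p^->1$ forces $\cpp\in\Pp(0,\infty)$; strictly speaking we only need the case at hand where $\lambda=1$ so $\qq=\pp$). Since $E\subset(0,b)$, we have $\|w^{-1}\chi_E\|_{p'(\cdot)} \leq \|w^{-1}\chi_{(0,b)}\|_{p'(\cdot)}$ by monotonicity of the norm. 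Therefore
\[
|E| \leq C\|w\chi_E\|_{p(\cdot)}\,\|w^{-1}\chi_{(0,b)}\|_{p'(\cdot)}.
\]
Now apply the $A_{p(\cdot),0}$ condition with $\lambda=1$: $\|w\chi_{(0,b)}\|_{p(\cdot)}\|w^{-1}\chi_{(0,b)}\|_{p'(\cdot)}\leq Cb$, so $\|w^{-1}\chi_{(0,b)}\|_{p'(\cdot)} \leq Cb/\|w\chi_{(0,b)}\|_{p(\cdot)}$ (the denominator is positive and finite because $w$ is a weight, hence locally integrable and a.e.\ positive, so $0<\|w\chi_{(0,b)}\|_{p(\cdot)}<\infty$). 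Substituting gives
\[
|E| \leq C\,\|w\chi_E\|_{p(\cdot)}\,\frac{b}{\|w\chi_{(0,b)}\|_{p(\cdot)}},
\]
and dividing through by $b = |(0,b)|$ yields the stated inequality.

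There is no serious obstacle here; the argument is a direct chaining of generalized Hölder, monotonicity of the Luxemburg norm under restriction of the set, and the definition of the weight class. The only point requiring a word of care is justifying that $\|w\chi_{(0,b)}\|_{p(\cdot)}$ is strictly positive and finite so that the division is legitimate: positivity follows since $w>0$ a.e.\ on the set of positive measure $(0,b)$, and finiteness from $p^+<\infty$ together with $w\in L^1_{\rm loc}$ — more precisely, one can also note that finiteness of $\|w\chi_{(0,b)}\|_{p(\cdot)}$ is already implicit in the $A_{p(\cdot),0}$ condition, since the product appearing there is finite. The constant $C$ in the conclusion depends only on the dimension constant in Hölder's inequality for variable exponents and on the $A_{p(\cdot),0}$ constant of $w$, and in particular is independent of $b$ and $E$, as required.
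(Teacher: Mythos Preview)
Your argument is correct and is essentially identical to the paper's proof: both write $|E|=\int w\chi_E\cdot w^{-1}$, apply the generalized H\"older inequality, and then invoke the $A_{p(\cdot),0}$ condition to replace $\|w^{-1}\chi_{(0,b)}\|_{p'(\cdot)}$ by $Cb/\|w\chi_{(0,b)}\|_{p(\cdot)}$. The only cosmetic difference is that the paper applies H\"older directly over $(0,b)$ (so the factor $\|w^{-1}\chi_{(0,b)}\|_{p'(\cdot)}$ appears immediately), whereas you first bound by $\|w^{-1}\chi_E\|_{p'(\cdot)}$ and then use monotonicity; this is the same step.
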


\begin{proof}
  Fix $b>0$ and $E\subset (0,b)$. Then by H\"older's inequality and
  the $A_{p(\cdot),0}$ condition we have
\[|E|=\int_0^b w(x)\chi_E(x) w^{-1}(x)\,dx
\leq C\|w\chi_E\|_{p(\cdot)}\|w^{-1}\chi_{(0,b)}\|_{p'(\cdot)}
\leq
\frac{Cb\|w\chi_E\|_{p(\cdot)}}{\|w\chi_{(0,b)}\|_{p(\cdot)}}.\qedhere\]
\end{proof}

\begin{lem}\label{Plog0norm}
  Given $\pp \in \Pp(0,\infty)$ suppose $\pp\in LH_0(0,\infty)$.  If
  $w\in A_{p(\cdot),0}$,  then there exists $C_0>0$, depending on $\pp$
  and $w$, such that for every $b>0$,
\[\|w\chi_{(0,b)}\|_{p(\cdot)}^{p^-_{(0,b)}-p^+_{(0,b)}}\leq C_0.\]
\end{lem}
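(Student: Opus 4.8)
The plan is to reduce the estimate to a lower bound for $\|w\chi_{(0,b)}\|_{p(\cdot)}$ of the form $\|w\chi_{(0,b)}\|_{p(\cdot)}\gtrsim \min\{b,1/2\}$, and then combine it with Lemma~\ref{equivPlog0}. The key elementary point is that $p^-_{(0,b)}-p^+_{(0,b)}\le 0$, so $t\mapsto t^{p^-_{(0,b)}-p^+_{(0,b)}}$ is non-increasing on $(0,\infty)$; hence any lower bound for $\|w\chi_{(0,b)}\|_{p(\cdot)}$ immediately yields an upper bound for $\|w\chi_{(0,b)}\|_{p(\cdot)}^{p^-_{(0,b)}-p^+_{(0,b)}}$.

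To get the lower bound I would first record two facts. Since $w>0$ a.e., the function $w\chi_{(0,1/2)}$ is not a.e.\ zero, so $E:=\|w\chi_{(0,1/2)}\|_{p(\cdot)}>0$; and the $A_{p(\cdot),0}$ condition at the single scale $b=1/2$, namely $\|w\chi_{(0,1/2)}\|_{p(\cdot)}\|w^{-1}\chi_{(0,1/2)}\|_{p'(\cdot)}\le C/2$, together with $E>0$, shows $D:=\|w^{-1}\chi_{(0,1/2)}\|_{p'(\cdot)}<\infty$. Now fix $b>0$ and split into two cases. If $0<b\le 1/2$, then H\"older's inequality gives $b=\int_0^b w\,w^{-1}\,dx\le c_{p(\cdot)}\|w\chi_{(0,b)}\|_{p(\cdot)}\|w^{-1}\chi_{(0,b)}\|_{p'(\cdot)}$, and since $(0,b)\subset(0,1/2)$, monotonicity of the Luxemburg norm gives $\|w^{-1}\chi_{(0,b)}\|_{p'(\cdot)}\le D$, whence $\|w\chi_{(0,b)}\|_{p(\cdot)}\ge b/(c_{p(\cdot)}D)$. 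Raising to the power $p^-_{(0,b)}-p^+_{(0,b)}\le 0$ yields
\[
\|w\chi_{(0,b)}\|_{p(\cdot)}^{p^-_{(0,b)}-p^+_{(0,b)}}
\le \bigl(c_{p(\cdot)}D\bigr)^{p^+_{(0,b)}-p^-_{(0,b)}}\; b^{p^-_{(0,b)}-p^+_{(0,b)}},
\]
where the first factor is at most $\max\{1,(c_{p(\cdot)}D)^{p^+-p^-}\}$ because $0\le p^+_{(0,b)}-p^-_{(0,b)}\le p^+-p^-$, and the second factor is bounded by Lemma~\ref{equivPlog0}. If instead $b>1/2$, then $\|w\chi_{(0,b)}\|_{p(\cdot)}\ge E$ and again $0\le p^+_{(0,b)}-p^-_{(0,b)}\le p^+-p^-$, so $\|w\chi_{(0,b)}\|_{p(\cdot)}^{p^-_{(0,b)}-p^+_{(0,b)}}\le\max\{1,E^{p^--p^+}\}$. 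Taking $C_0$ to be the larger of the two resulting constants completes the proof.

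I do not expect any real obstacle here: the argument only combines H\"older's inequality, the $A_{p(\cdot),0}$ condition (used solely to guarantee $D<\infty$), monotonicity of the norm, and Lemma~\ref{equivPlog0}, and it uses $p^+<\infty$ only to keep $p^+-p^-$ finite. The one thing demanding a little care is the bookkeeping with powers whose exponents are non-positive and vary with $b$: one must remember that $t\mapsto t^s$ reverses inequalities when $s\le 0$ and distinguish whether the relevant base lies in $[1,\infty)$ or in $(0,1)$. Conceptually, the point worth isolating is that $w\in A_{p(\cdot),0}$ (through H\"older's inequality) forces the decay $\|w\chi_{(0,b)}\|_{p(\cdot)}\gtrsim b$ as $b\to 0^+$, which is exactly the rate that Lemma~\ref{equivPlog0} can absorb.
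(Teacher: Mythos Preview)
Your proof is correct and follows essentially the same approach as the paper's: split into small and large $b$, obtain a lower bound for $\|w\chi_{(0,b)}\|_{p(\cdot)}$ (of order $b$ for small $b$ via H\"older, constant for large $b$), and then invoke Lemma~\ref{equivPlog0}. Your treatment of the large-$b$ case is in fact slightly more direct than the paper's, since you use plain monotonicity of the norm to get $\|w\chi_{(0,b)}\|_{p(\cdot)}\ge E=\|w\chi_{(0,1/2)}\|_{p(\cdot)}$, whereas the paper passes to the dual weight $w^{-1}\in A_{p'(\cdot),0}$ and applies Lemma~\ref{Ainftynorm} to reach the same kind of constant lower bound.
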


\begin{proof}Fix $b>0$. We will consider two cases: $b<1$ and $b\geq 1$.

If $b<1$, then we apply the previous lemma with $E=(0,b)\subset (0,1)$
to get
\[\|w\chi_{(0,b)}\|_{p(\cdot)}\geq Cb\|w\chi_{(0,1)}\|_{p(\cdot)}.\]
Then by Lemma \ref{equivPlog0},
\begin{align*}
\|w\chi_{(0,b)}\|_{p(\cdot)}^{p^-_{(0,b)}-p^+_{(0,b)}}
&\leq (Cb\|w\chi_{(0,1)}\|_{p(\cdot)})^{p^-_{(0,b)}-p^+_{(0,b)}}\\
&\leq Cb^{p^-_{(0,b)}-p^+_{(0,b)}}(1+\|w\chi_{(0,1)}\|_{p(\cdot)}^{-1})^{p^+_{(0,b)}-p^-_{(0,b)}}\\
&\leq C(1+\|w\chi_{(0,1)}\|_{p(\cdot)}^{-1})^{p^+-p^-}=C_1.
\end{align*}

If $b\geq 1$, then we repeat the argument but now take
$E=(0,1)\subset (0,b)$ and use Lemma \ref{Ainftynorm} with
$w^{-1}\in A_{p'(\cdot),0}$. By H\"older's inequality,
\[\|w^{-1}\chi_{(0,1)}\|_{p'(\cdot)}
\geq \frac{C\|w^{-1}\chi_{(0,b)}\|_{p'(\cdot)}}{b}
\geq \frac{C}{\|w\chi_{(0,b)}\|_{p(\cdot)}}.\]
Thus,
\begin{multline*}
\|w\chi_{(0,b)}\|_{p(\cdot)}^{p^-_{(0,b)}-p^+_{(0,b)}}
\leq C\|w^{-1}\chi_{(0,1)}\|_{p'(\cdot)}^{p^+_{(0,b)}-p^-_{(0,b)}}\\
\leq C(1+\|w^{-1}\chi_{(0,1)}\|_{p'(\cdot)})^{p^+-p^-}=C_2.
\end{multline*}
If we let $C=\max\{C_1,C_2\}$ we get the desired inequality.
\end{proof}

\begin{rem}
  In Lemma~\ref{Plog0norm} the $LH_\infty(0,\infty)$ condition on
  $\pp$ is not required (as in \cite[Lemma~3.3]{CUFN}) since the
  intervals involved in the $A_{p(\cdot),0}$ condition are nested.
\end{rem}

We now want to define a condition analogous to the $A_\infty$
condition but associated with the basis of intervals $\{(0,b)\}_{b>0}$
(as considered in \cite{DMRO2}).  Hereafter, given an
exponent $\pp$ and a weight $w$, we define the weight $W(x)=w(x)^{p(x)}$
and denote $W(E)=\int_E W(x) \,dx$ for any measurable set
$E\subset(0,\infty)$. Similarly, for the dual weight $w^{-1}$ we write
$\sigma(x)=w(x)^{-p'(x)}$ and
$\sigma(E)=\int_E \sigma(x)\,dx$.

\begin{defn}
  Given a weight $w$ such that $0<w(0,b)<\infty$ for every $b>0$, we say that $w\in A_{\infty,0}$ if there exist
  constants $C,\delta>0$ such that for every $b>0$ and each measurable
  set $E\subset (0,b)$,
\[\frac{|E|}{b}\leq C\left(\frac{w(E)}{w(0,b)}\right)^{\delta}.\]
\end{defn}

As an immediate consequence of this definition, we have the following
lemma.

\begin{lem}\label{Ainftymedidas}
  If $w\in A_{\infty,0}$, for every $0<\alpha<1$, there exists
  $0<\beta<1$ (depending on $\alpha$) such that, given $b>0$ and a
  measurable set $E\subset (0,b)$, if $|E| \geq\alpha b$, then
  $w(E) \geq\beta w(0,b)$.
\end{lem}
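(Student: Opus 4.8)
The plan is to apply the defining inequality of the $A_{\infty,0}$ class directly to the set $E$; no passage to the complement $(0,b)\setminus E$ is needed. First I would fix the constants $C,\delta>0$ furnished by the hypothesis $w\in A_{\infty,0}$, fix $0<\alpha<1$, and take an arbitrary $b>0$ and measurable $E\subset(0,b)$ with $|E|\geq\alpha b$. Then the defining inequality gives
\[
\alpha\leq\frac{|E|}{b}\leq C\left(\frac{w(E)}{w(0,b)}\right)^{\delta},
\]
and rearranging it isolates the ratio: $w(E)\geq (\alpha/C)^{1/\delta}\,w(0,b)$.

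The only remaining point is to choose a single $\beta\in(0,1)$ that works for all such $b$ and $E$. Since $E\subset(0,b)$ one always has $w(E)\leq w(0,b)$, so the quantity $(\alpha/C)^{1/\delta}$ could in principle be $\geq 1$; to guard against this I would simply set $\beta=\min\{(\alpha/C)^{1/\delta},\tfrac12\}$. This $\beta$ is strictly between $0$ and $1$, it depends only on $\alpha$ together with the fixed constants $C$ and $\delta$ attached to $w$, and it satisfies $w(E)\geq\beta\,w(0,b)$, which is exactly the claim.

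I do not anticipate any genuine difficulty: the lemma is essentially the defining inequality of $A_{\infty,0}$ read in the contrapositive direction, so the argument reduces to a one-line substitution. The only place that deserves a word of care is this last normalization, ensuring that $\beta$ can be chosen strictly below $1$ no matter how large $C$ or $\delta$ happen to be.
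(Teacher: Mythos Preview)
Your proof is correct and is exactly the ``immediate consequence of the definition'' that the paper has in mind; no separate argument is given there. One small remark: your final precaution is unnecessary, since taking $E=(0,b)$ in the $A_{\infty,0}$ inequality forces $C\geq 1$, and hence $(\alpha/C)^{1/\delta}<1$ automatically for $0<\alpha<1$.
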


The next lemma requires the deeper properties of the $A_\infty$
condition defined with respect to a basis.  

\begin{lem} \label{lemma:non-integrable}
If $w\in A_{\infty,0}$, then $w\not\in L^1(0,\infty)$.  
\end{lem}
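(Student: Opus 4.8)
The goal is to show that an $A_{\infty,0}$ weight cannot be integrable on $(0,\infty)$. The natural strategy is to argue by contradiction: suppose $w \in L^1(0,\infty)$. Then $w(0,b) \to w(0,\infty) < \infty$ as $b \to \infty$, and the tail $w(b,2b) = w(0,2b) - w(0,b) \to 0$. The idea is to exploit the $A_{\infty,0}$ inequality on the interval $(0,2b)$ with the set $E = (b,2b)$, which has $|E|/|(0,2b)| = 1/2$. By Lemma~\ref{Ainftymedidas} (with $\alpha = 1/2$), there is a fixed $\beta \in (0,1)$, independent of $b$, such that $w(b,2b) \geq \beta\, w(0,2b)$. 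Letting $b \to \infty$, the left side tends to $0$ while the right side tends to $\beta\, w(0,\infty) > 0$ (note $w(0,\infty) > 0$ since $w > 0$ a.e. and $w(0,b) > 0$ for each $b$), a contradiction.

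\textbf{Carrying it out.} First I would record that if $w \in L^1(0,\infty)$ then $w(0,\infty) = \int_0^\infty w\,dx$ is a finite positive number and $w(0,b) \nearrow w(0,\infty)$, so in particular $w(b,2b) = w(0,2b) - w(0,b) \to 0$ as $b \to \infty$ by the Cauchy criterion for the convergent monotone sequence (or simply because the tail of a convergent integral vanishes). Next, apply Lemma~\ref{Ainftymedidas}: fix $\alpha = 1/2$ and obtain the corresponding $\beta = \beta(1/2) \in (0,1)$. For each $b > 0$, the set $E = (b,2b) \subset (0,2b)$ satisfies $|E| = b = \tfrac12 |(0,2b)|$, hence $w(b,2b) \geq \beta\, w(0,2b) \geq \beta\, w(0,b_0) > 0$ for any fixed $b_0 \le b$. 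But the left side tends to $0$ as $b\to\infty$, while the right side is bounded below by the fixed positive constant $\beta\, w(0,b_0)$ — contradiction. This proves $w \notin L^1(0,\infty)$.

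\textbf{Where the subtlety lies.} There is essentially no hard analytic step here; the only point requiring care is that the constant $\beta$ delivered by Lemma~\ref{Ainftymedidas} depends only on $\alpha = 1/2$ and not on $b$, which is exactly what makes the contradiction go through uniformly as $b \to \infty$. One should also make sure $w(0,b) > 0$ for every $b$, which is part of the standing hypothesis in the definition of $A_{\infty,0}$, so that $w(0,\infty) \geq w(0,1) > 0$ and the right-hand side is genuinely bounded away from zero. Everything else — the monotone convergence of $w(0,b)$ and the vanishing of the tail — is routine. An alternative, slightly more hands-on phrasing avoids Lemma~\ref{Ainftymedidas} and uses the definition of $A_{\infty,0}$ directly: $\tfrac12 \le C\big(w(b,2b)/w(0,2b)\big)^\delta$ forces $w(b,2b) \geq (2C)^{-1/\delta}\, w(0,2b)$, giving the same contradiction; I would likely present it this way to keep the proof self-contained.
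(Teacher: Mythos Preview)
Your proof is correct and in fact more self-contained than the paper's. The paper invokes a reverse-type inequality from \cite{DMRO2}: there exist $\gamma,\delta>1$ such that $\gamma|E|\le b$ implies $\delta\,w(E)\le w(0,b)$; taking $E=(0,1)$ and $b=\gamma^k$ then gives $w(0,\gamma^k)\ge \delta^k w(0,1)\to\infty$. You instead apply the defining $A_{\infty,0}$ inequality directly to $E=(b,2b)\subset(0,2b)$, obtaining $w(b,2b)\ge (2C)^{-1/\delta}w(0,2b)$, which immediately contradicts integrability. Your route avoids the external citation and the nontrivial equivalence of $A_{\infty,0}$ characterizations; the paper's route yields an explicit exponential lower bound on $w(0,b)$, though that extra information is not needed here. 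Either argument is fine---yours is arguably the cleaner one for this lemma.
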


\begin{proof}
It follows from~\cite[Theorems~3.1,~4.1]{DMRO2} that if $w\in
A_{\infty,0}$, then there exist constants $\gamma,\,\delta>1$, such
that for any $b>0$, if $E\subset (0,b)$ and $\gamma|E| \leq b$, then $\delta w(E)\leq w(0,b)$.  
In particular, if we let $b_k = \gamma^k$ for $k\in \mathbb N$, and let $E=(0,1)$, then
\[ w(0,b_k) \geq \delta^k w(0,1).  \]
Since the right-hand side tends to infinity as $k\rightarrow \infty$ (recall that $0<w(0,1)<\infty$),
we get the desired conclusion.
\end{proof}

We will apply these lemmas to the weights $W$ and
$\sigma$ using the following result.

\begin{lem}\label{Ainfty} 
Given $\pp\in \Pp(0,\infty)$, suppose  $\pp\in LH_0(0,\infty)\cap
LH_\infty(0,\infty)$.
 If $w\in A_{p(\cdot),0}$, then $W\in A_{\infty,0}$.
\end{lem}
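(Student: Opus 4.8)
The plan is to verify the defining inequality $\frac{|E|}{b} \le C\left(\frac{W(E)}{W(0,b)}\right)^\delta$ directly, using H\"older's inequality on the variable Lebesgue spaces together with the structural lemmas already established. First I would record the basic relation between the modular and the norm of $w\chi_{(0,b)}$: since $W(0,b) = \varrho_{p(\cdot)}(w\chi_{(0,b)})$, the norm–modular inequalities (valid because $1 < p^- \le p^+ < \infty$) give $\|w\chi_{(0,b)}\|_{p(\cdot)}^{p^+_{(0,b)}} \lesssim W(0,b) \lesssim \|w\chi_{(0,b)}\|_{p(\cdot)}^{p^-_{(0,b)}}$ when the norm is $\le 1$, with the exponents swapped when it is $\ge 1$; the same holds for $w\chi_E$. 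Here is where Lemma~\ref{Plog0norm} enters: it lets me control the discrepancy $\|w\chi_{(0,b)}\|_{p(\cdot)}^{p^-_{(0,b)} - p^+_{(0,b)}} \le C_0$ uniformly in $b$, so that $W(0,b)$ and $\|w\chi_{(0,b)}\|_{p(\cdot)}^{p^-_{(0,b)}}$ (or the $p^+$ power) are comparable up to a constant regardless of whether the norm is large or small.

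Next I would start from Lemma~\ref{Ainftynorm}, which already gives $\frac{|E|}{b} \le C\,\frac{\|w\chi_E\|_{p(\cdot)}}{\|w\chi_{(0,b)}\|_{p(\cdot)}}$. The goal is to convert the right-hand side, which is a ratio of norms, into a power of the ratio of modulars $W(E)/W(0,b)$. Write $r = \frac{\|w\chi_E\|_{p(\cdot)}}{\|w\chi_{(0,b)}\|_{p(\cdot)}} \le C\,\frac{|E|}{b} \cdot \frac{b}{|E|} \le C$ — more to the point, $\|w\chi_E\|_{p(\cdot)} \le \|w\chi_{(0,b)}\|_{p(\cdot)}$ by monotonicity of the norm. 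Using the modular estimates above, $W(E) \gtrsim \min\{\|w\chi_E\|^{p^-_{(0,b)}}, \|w\chi_E\|^{p^+_{(0,b)}}\}$ and $W(0,b) \lesssim \max\{\|w\chi_{(0,b)}\|^{p^-_{(0,b)}}, \|w\chi_{(0,b)}\|^{p^+_{(0,b)}}\}$, so after applying Lemma~\ref{Plog0norm} to absorb the $p^\pm$ gap one obtains $\frac{W(E)}{W(0,b)} \gtrsim \left(\frac{\|w\chi_E\|_{p(\cdot)}}{\|w\chi_{(0,b)}\|_{p(\cdot)}}\right)^{p^+_{(0,b)}} \ge \left(\frac{\|w\chi_E\|_{p(\cdot)}}{\|w\chi_{(0,b)}\|_{p(\cdot)}}\right)^{p^+}$, since the ratio of norms is at most $1$ and $p^+_{(0,b)} \le p^+$. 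Raising to the power $1/p^+$ and combining with Lemma~\ref{Ainftynorm} yields $\frac{|E|}{b} \le C\left(\frac{W(E)}{W(0,b)}\right)^{1/p^+}$, which is exactly the $A_{\infty,0}$ condition with $\delta = 1/p^+$.

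The main obstacle — and the reason the $LH_0$ hypothesis is genuinely needed — is the bookkeeping around the exponent gap $p^+_{(0,b)} - p^-_{(0,b)}$: the modular $W(\cdot)$ is a genuinely variable-exponent object, and without a uniform bound one cannot pass cleanly between $\|w\chi_{(0,b)}\|^{p^-_{(0,b)}}$ and $\|w\chi_{(0,b)}\|^{p^+_{(0,b)}}$, nor between the analogous quantities for $E$. This is precisely what Lemma~\ref{Plog0norm} (which in turn rests on Lemma~\ref{equivPlog0} and the $LH_0$ condition) is designed to supply. I would also need to be slightly careful about the two regimes $\|w\chi_{(0,b)}\|_{p(\cdot)} \lessgtr 1$ when invoking the norm–modular inequalities, but since the intervals $(0,b)$ are nested and $E \subset (0,b)$, the case analysis is routine once the key comparison is in hand. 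The hypothesis $LH_\infty$ is not used in this particular lemma (it is listed for uniformity with later statements), so I would not need to invoke Lemma~\ref{changeexp} here.
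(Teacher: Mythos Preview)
Your plan works cleanly in two of the three cases the paper isolates, but it breaks down in the third, and the break is exactly where the $LH_\infty$ hypothesis enters. Concretely: when $\|w\chi_E\|_{\pp}\le 1$ (whether $\|w\chi_{(0,b)}\|_{\pp}$ is small or large), the modular estimates give $W(E)\ge \|w\chi_E\|_{\pp}^{\,p^+_{(0,b)}}$ and $W(0,b)\le \|w\chi_{(0,b)}\|_{\pp}^{\,p^-_{(0,b)}}$ (or $\|w\chi_{(0,b)}\|_{\pp}^{\,p^+_{(0,b)}}$), and then Lemma~\ref{Plog0norm} \emph{does} absorb the gap because you need a \emph{lower} bound on $\|w\chi_{(0,b)}\|_{\pp}^{\,p^+_{(0,b)}-p^-_{(0,b)}}$, which is precisely the reciprocal of what that lemma bounds from above. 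So far so good.

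The gap is the remaining case $1<\|w\chi_E\|_{\pp}\le \|w\chi_{(0,b)}\|_{\pp}$. Here the modular bounds go the other way: $W(E)\ge \|w\chi_E\|_{\pp}^{\,p^-_{(0,b)}}$ while $W(0,b)\le \|w\chi_{(0,b)}\|_{\pp}^{\,p^+_{(0,b)}}$, so
\[
\frac{W(E)}{W(0,b)}\;\ge\;\Bigl(\frac{\|w\chi_E\|_{\pp}}{\|w\chi_{(0,b)}\|_{\pp}}\Bigr)^{p^+_{(0,b)}}\cdot\|w\chi_E\|_{\pp}^{\,p^-_{(0,b)}-p^+_{(0,b)}}
\;=\;\Bigl(\frac{\|w\chi_E\|_{\pp}}{\|w\chi_{(0,b)}\|_{\pp}}\Bigr)^{p^-_{(0,b)}}\cdot\|w\chi_{(0,b)}\|_{\pp}^{\,p^-_{(0,b)}-p^+_{(0,b)}}.
\]
Either way you now need a uniform \emph{lower} bound on a norm (which is $>1$) raised to the \emph{negative} exponent $p^-_{(0,b)}-p^+_{(0,b)}$; equivalently, an \emph{upper} bound on $\|w\chi_{(0,b)}\|_{\pp}^{\,p^+_{(0,b)}-p^-_{(0,b)}}$. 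Lemma~\ref{Plog0norm} gives the opposite inequality and is vacuous here (it is trivially $\le 1$ anyway when the norm exceeds $1$). Moreover, when $b$ is large the gap $p^+_{(0,b)}-p^-_{(0,b)}$ is close to $p^+-p^-$, which $LH_0$ alone does not control, while $\|w\chi_{(0,b)}\|_{\pp}$ can be arbitrarily large. So the ``routine'' absorption step simply fails in this regime, and your assertion that $LH_\infty$ is not used is incorrect.

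The paper handles exactly this case by invoking Lemma~\ref{changeexp} (which needs $LH_\infty$) with the measure $d\mu=W\,dx$ to prove the two-sided comparison $\|w\chi_{(0,b)}\|_{\pp}\sim W(0,b)^{1/p_\infty}$ and $\|w\chi_E\|_{\pp}\lesssim W(E)^{1/p_\infty}$ whenever the norms exceed $1$, yielding $\frac{|E|}{b}\le C\bigl(W(E)/W(0,b)\bigr)^{1/p_\infty}$ in that case. This is also the source of Corollary~\ref{normmodularpinfty}, used later. To repair your argument you must bring in $LH_\infty$ here.
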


\begin{proof}Notice first that from the fact that $0<w(x)<\infty$ a.e. and the $A_{p(\cdot),0}$ condition, $0<\|w\chi_{(0,b)}\|_{p(\cdot)}<\infty$ for every $b>0$. Hence, $0<W(0,b)<\infty$ for every $b>0$.
	
  Fix $b>0$ and a measurable set $E\subset (0,b)$. We consider three cases:
  $\|w\chi_E\|_{p(\cdot)}\leq \|w\chi_{(0,b)}\|_{p(\cdot)}\leq 1$,
  $\|w\chi_E\|_{p(\cdot)}\leq 1 \leq \|w\chi_{(0,b)}\|_{p(\cdot)}$ and
  $1<\|w\chi_E\|_{p(\cdot)}\leq \|w\chi_{(0,b)}\|_{p(\cdot)}$.

  In the first case, by \cite[Corollary 2.23]{CUFbook}, we have that
  $W(E)\leq W(0,b)\leq 1$,
  $\|w\chi_E\|_{p(\cdot)}\leq W(E)^{1/p^+_E}\leq W(E)^{1/p^+_{(0,b)}}$
  and $\|w\chi_{(0,b)}\|_{p(\cdot)}\geq W(0,b)^{1/p^-_{(0,b)}}$. Thus,
  by  Lemmas \ref{Ainftynorm} and \ref{Plog0norm} we get
\begin{align*}
\frac{|E|}{b}
&\leq C\frac{\|w\chi_E\|_{p(\cdot)}}{\|w\chi_{(0,b)}\|_{p(\cdot)}} \\
& =C\frac{\|w\chi_E\|_{p(\cdot)}}
{\|w\chi_{(0,b)}\|_{p(\cdot)}^{p^-_{(0,b)}/p^+_{(0,b)}}\|w\chi_{(0,b)}\|_{p(\cdot)}^{1-p^-_{(0,b)}/p^+_{(0,b)}}}\\
&\leq C\frac{W(E)^{1/p^+_{(0,b)}}
\big(\|w\chi_{(0,b)}\|_{p(\cdot)}^{p^-_{(0,b)}-p^+_{(0,b)}}\big)^{1/p^-}}{W(0,b)^{1/p^+_{(0,b)}}}\\
&\leq C\left(\frac{W(E)}{W(0,b)}\right)^{1/p^+_{(0,b)}} \\
&\leq C\left(\frac{W(E)}{W(0,b)}\right)^{1/p^+}.
\end{align*}

In the second case, if
$\|w\chi_E\|_{p(\cdot)}\leq 1 \leq \|w\chi_{(0,b)}\|_{p(\cdot)}$, then we have
$\|w\chi_E\|_{p(\cdot)}\leq W(E)^{1/p^+_{(0,b)}}$ and
$\|w\chi_{(0,b)}\|_{p(\cdot)}\geq W(0,b)^{1/p^+_{(0,b)}}$, which
yields
\[\frac{|E|}{b}
\leq C\frac{\|w\chi_E\|_{p(\cdot)}}{\|w\chi_{(0,b)}\|_{p(\cdot)}}
\leq C\left(\frac{W(E)}{W(0,b)}\right)^{1/p^+_{(0,b)}}
\leq C\left(\frac{W(E)}{W(0,b)}\right)^{1/p^+},\]
where we have used again Lemma \ref{Ainftynorm}.

Finally, in  the third case, if $1<\|w\chi_E\|_{p(\cdot)}\leq
\|w\chi_{(0,b)}\|_{p(\cdot)}$, then we will show that
\begin{equation}\label{case3}
\frac{|E|}{b}\leq C\left(\frac{W(E)}{W(0,b)}\right)^{1/p_\infty}
\leq C\left(\frac{W(E)}{W(0,b)}\right)^{1/p^+}.
\end{equation}
Since $\pp\in LH_\infty(0,\infty)$ and
$\|w\chi_{(0,b)}\|_{p(\cdot)}^{-1}\leq 1$, 
we can apply Lemma \ref{changeexp} with measure $d\mu(x)=w(x)^{p(x)}
\,dx$, $G=(0,b)$ and $g(x)\equiv
\|w\chi_{(0,b)}\|_{p(\cdot)}^{-1}$. Hence, for every $t>1/p^-$,
\[\int_0^b \frac{1}{\|w\chi_{(0,b)}\|_{p(\cdot)}^{p_\infty}}
  w(x)^{p(x)}\,dx
\leq C_t\int_0^b
\left(\frac{w(x)}{\|w\chi_{(0,b)}\|_{p(\cdot)}}\right)^{p(x)}\,dx
+\int_0^b \frac{w(x)^{p(x)}}{(e+x)^{tp^-}} \,dx.\]
By the definition of the norm the first term is equal to $C_t$. We
will now show that we can choose $t>1/p^-$, depending only on $\pp$
and $w$, such that the second term is smaller than 1. In fact,
\begin{align}\label{errorestimate}
\nonumber\int_0^\infty \frac{w(x)^{p(x)}}{(e+x)^{tp^-}} \,dx 
&=\int_0^1 \frac{w(x)^{p(x)}}{(e+x)^{tp^-}} \,dx
+\sum\limits_{k=0}^{\infty} \int_{2^k}^{2^{k+1}} \frac{w(x)^{p(x)}}{(e+x)^{tp^-}} \,dx\\
\nonumber&\leq \frac{W(0,1)}{e^{tp^-}} 
+\sum\limits_{k=0}^{\infty} \frac{1}{2^{ktp^-}}W(2^k,2^{k+1})\\
\nonumber&\leq \frac{W(0,1)}{e^{tp^-}}
+\sum\limits_{k=0}^{\infty} \frac{1}{2^{ktp^-}}W(0,2^{k+1})\\
&\leq \frac{W(0,1)}{e^{tp^-}}
+\sum\limits_{k=0}^{\infty} \frac{1}{2^{ktp^-}}\max\left\{\|w\chi_{(0,2^{k+1})}\|_{p(\cdot)}^{p^-},\|w\chi_{(0,2^{k+1})}\|_{p(\cdot)}^{p^+}\right\},
\end{align}
where in the last inequality we used~\cite[Corollary 2.23]{CUFbook}.

To estimate the norm $\|w\chi_{(0,2^{k+1})}\|_{p(\cdot)}$ we use Lemma
\ref{Ainftynorm} with $E=(0,1)\subset (0,2^{k+1})$:
\[\|w\chi_{(0,2^{k+1})}\|_{p(\cdot)}
\leq C\frac{|(0,2^{k+1})|}{|(0,1)|}\|w\chi_{(0,1)}\|_{p(\cdot)}\leq
C2^{k+1}.\]
Thus,
$\max\left\{\|w\chi_{(0,2^{k+1})}\|_{p(\cdot)}^{p^-},\|w\chi_{(0,2^{k+1})}\|_{p(\cdot)}^{p^+}\right\}
\leq
C2^{(k+1)p^+}\leq C2^{kp^+}$;  consequently,
\[\int_0^\infty \frac{w(x)^{p(x)}}{(e+x)^{tp^-}} \,dx\leq \frac{W(0,1)}{e^{tp^-}}+C\sum\limits_{k=0}^{\infty} \frac{2^{kp^+}}{2^{ktp^-}}.\]
If we take $t>p^+/p_-$, the last sum converges; hence, by the
dominated convergence theorem,
\[\lim_{t\rightarrow\infty} \int_0^\infty
  \frac{w(x)^{p(x)}}{(e+x)^{tp^-}} \,dx=0. \]
Furthermore,
$\displaystyle\lim_{t\rightarrow\infty} W(0,1)e^{-tp^- }=0$.
Therefore, we can take $t$ sufficiently large  that
\eqref{errorestimate} is less than 1. Therefore,
\[W(0,b)\leq (C_t+1)\|w\chi_{(0,b)}\|_{p(\cdot)}^{p_\infty}\]
or, equivalently,
\begin{equation}\label{eq1}
W(0,b)^{1/p_\infty}\leq (C_t+1)^{1/p_\infty}\|w\chi_{(0,b)}\|_{p(\cdot)}.
\end{equation}

We now estimate the term $W(E)$.  We  again apply
Lemma~\ref{changeexp}, exchanging the roles of $\pp$ and
$p_\infty$. Thus,
\[1=\int_E \left(\frac{w(x)}{\|w\chi_E\|_{p(\cdot)}}\right)^{p(x)}\,dx
\leq C_t \int_E \|w\chi_E\|_{p(\cdot)}^{-p_\infty}w(x)^{p(x)}
+\int_E \frac{w(x)^{p(x)}}{(e+x)^{tp^-}} \,dx.\]
If we repeat the above argument, we can make the last integral smaller
than $1/2$, which gives us,
\begin{equation}\label{eq2}
\|w\chi_E\|_{p(\cdot)}^{p_\infty}\leq 2C_t W(E).
\end{equation}
If we combine \eqref{eq1} and \eqref{eq2}, we get \eqref{case3}. This
completes the proof.
\end{proof}

From inequalities \eqref{eq1} and \eqref{eq2} with $E=(0,b)$, we get the following corollary.

\begin{cor}\label{normmodularpinfty}
Given $\pp \in \Pp(0,\infty)$ suppose $\pp\in LH_0(0,\infty)\cap
LH_\infty(0,\infty)$.
 If $w\in A_{p(\cdot),0}$ and $b>0$ such that
 $\|w\chi_{(0,b)}\|_{p(\cdot)}\geq 1$, then
\[\|w\chi_{(0,b)}\|_{p(\cdot)}\sim W(0,b)^{1/p_\infty}.\]
\end{cor}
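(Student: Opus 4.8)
The plan is to obtain the corollary as an immediate consequence of the two estimates \eqref{eq1} and \eqref{eq2} established inside the proof of Lemma~\ref{Ainfty}, specialized to the interval $E=(0,b)$. First I would recall that \eqref{eq1}, namely $W(0,b)^{1/p_\infty}\le (C_t+1)^{1/p_\infty}\|w\chi_{(0,b)}\|_{p(\cdot)}$, was derived precisely under the assumption that $\|w\chi_{(0,b)}\|_{p(\cdot)}^{-1}\le 1$; since this is exactly the hypothesis $\|w\chi_{(0,b)}\|_{p(\cdot)}\ge 1$ of the corollary, \eqref{eq1} applies and yields $W(0,b)^{1/p_\infty}\lesssim \|w\chi_{(0,b)}\|_{p(\cdot)}$, with implied constant depending only on $\pp$ and $w$ through $C_\infty$ and the parameter $t$ fixed in that proof.

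For the reverse inequality I would invoke \eqref{eq2} with $E=(0,b)$. Its proof only requires that $0<\|w\chi_{(0,b)}\|_{p(\cdot)}<\infty$ --- which holds since $w\in A_{p(\cdot),0}$ and $0<w<\infty$ a.e., as noted at the beginning of the proof of Lemma~\ref{Ainfty} --- together with the fact, verified in \eqref{errorestimate}, that the error term $\int_0^b w(x)^{p(x)}(e+x)^{-tp^-}\,dx$ can be made smaller than $1/2$ by choosing $t$ sufficiently large (depending only on $\pp$ and $w$). Hence \eqref{eq2} gives $\|w\chi_{(0,b)}\|_{p(\cdot)}^{p_\infty}\le 2C_t\, W(0,b)$, that is, $\|w\chi_{(0,b)}\|_{p(\cdot)}\lesssim W(0,b)^{1/p_\infty}$.

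Combining the two displays proves $\|w\chi_{(0,b)}\|_{p(\cdot)}\sim W(0,b)^{1/p_\infty}$, with comparability constants independent of $b$. There is essentially no obstacle here: all of the analytic content is already present in the proof of Lemma~\ref{Ainfty}. The only point worth emphasizing --- and the reason the hypothesis $\|w\chi_{(0,b)}\|_{p(\cdot)}\ge 1$ cannot be dropped --- is that \eqref{eq2} holds for every $b>0$ while \eqref{eq1} genuinely uses $\|w\chi_{(0,b)}\|_{p(\cdot)}\ge 1$; without it one cannot apply Lemma~\ref{changeexp} to the constant function $g\equiv\|w\chi_{(0,b)}\|_{p(\cdot)}^{-1}$, since that lemma requires $0\le g\le 1$.
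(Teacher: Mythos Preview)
Your proof is correct and matches the paper's approach exactly: the paper simply states that the corollary follows from \eqref{eq1} and \eqref{eq2} with $E=(0,b)$. One small correction to your commentary: the derivation of \eqref{eq2} in the paper also applies Lemma~\ref{changeexp} to the constant function $g\equiv\|w\chi_E\|_{p(\cdot)}^{-1}$ (with $d\mu=W\,dx$), so it too requires $\|w\chi_E\|_{p(\cdot)}\ge 1$, not merely $0<\|w\chi_E\|_{p(\cdot)}<\infty$; hence both \eqref{eq1} and \eqref{eq2} use the hypothesis, though this does not affect the validity of your argument for the corollary.
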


\section{Proof of Theorem \ref{strongtypeN}}\label{secN}

\begin{proof}

  The implication \textit{(i)}$\Rightarrow$\textit{(ii)} is
  straightforward. We will next prove 
  \textit{(ii)}$\Rightarrow$\textit{(iii)}. Suppose that for every
  $\mu>0$ and every $f\in L^{p(\cdot)}_w(0,\infty)$,
\[\mu\|w\chi_{\{x\in (0,\infty): Nf(x)>\mu\}}\|_{p(\cdot)}
\leq K\|fw\|_{p(\cdot)}.\]
Fix $b>0$; then by duality there exists  a non-negative function $g\in
L^{p(\cdot)}(0,\infty)$ such that $\|g\|_{p(\cdot)}\leq 1$ and
\[\|w^{-1}\chi_{(0,b)}\|_{p'(\cdot)}\sim \int_0^b w^{-1}(y)g(y)\,dy.\]
Without loss of generality, we may suppose that
$\|w^{-1}\chi_{(0,b)}\|_{p'(\cdot)}>0$. If we let
$f=\chi_{(0,b)}w^{-1}g$ and
$\mu=\frac{1}{b}\int_0^b w^{-1}(y)g(y)\,dy>0$, then for every
$x\in (0,b)$, $Nf(x)\geq \mu$. Thus, for every $\nu>1$,
$(0,b)\subset \{x\in (0,\infty): Nf(x)>\mu/\nu\}$.  From the weak-type
inequality, if we let $\nu\rightarrow 1$,
\[\left(\frac{1}{b}\int_0^b w^{-1}(y)g(y)\,dy\right)
\|w\chi_{(0,b)}\|_{p(\cdot)}
\leq C K\|fw\|_{p(\cdot)}
=C\|g\|_{p(\cdot)}\leq C.\]
Therefore, we have that 
\begin{multline*}
\|w\chi_{(0,b)}\|_{p(\cdot)}\|w^{-1}\chi_{(0,b)}\|_{p'(\cdot)}
\sim\|w\chi_{(0,b)}\|_{p(\cdot)} \int_0^b w^{-1}(y)g(y)\,dy\\
=C b\|w\chi_{(0,b)}\|_{p(\cdot)} \left(\frac{1}{b}\int_0^b w^{-1}(y)g(y)\,dy\right)
\leq Cb.
\end{multline*}
Since this is true for all $b>0$, $w\in A_{\pp,0}$.

\medskip

We now come to the proof of \textit{(iii)}$\Rightarrow$\textit{(i)},
which is the most difficult part.  Fix $w\in A_{p(\cdot),0}$; without
loss of generality we may assume that $f\geq 0$ and
$\|fw\|_{p(\cdot)}\leq 1$. We begin by arguing as in the proof of
\cite[Lemma~2.2]{DMRO1}. From the definition we have that $Nf$ is
decreasing and continuous. Thus, given $\mu>0$, if the level set
$\{x\in (0,\infty): Nf(x)>\mu\}\neq \emptyset$, it either equals $(0,b)$ for
some $b>0$ or it equals $(0,+\infty)$. In the first case, we have that
\[\lambda b=\int_0^b f(x) \,dx\]
while in the second case,
\[\int_0^\infty f(x) \,dx=+\infty.\]
To avoid the latter case, we shall further assume that $f$ is bounded
and has compact support.  The full result then follows by a standard
density argument (cf.~\cite[Section~3.4]{CUFbook}).

We now split
$f=f_1+f_2$, where $f_1=f\chi_{\{f\sigma^{-1}>1\}}$ and
$f_2=f\chi_{\{f\sigma^{-1}\leq 1\}}$. Then, $Nf\leq Nf_1+Nf_2$ and
\begin{equation} \label{eqn:modular}
\int_0^\infty f_i(x)^{p(x)}w(x)^{p(x)} \,dx
\leq \int_0^\infty f(x)^{p(x)}w(x)^{p(x)} \,dx\leq 1, 
\quad i=1,2.
\end{equation}
Hence, it will suffice to show that
\begin{equation}\label{Nfi}
I_i:=\int_0^\infty Nf_i(x)^{p(x)} w(x)^{p(x)}\,dx\leq C,\quad i=1,2.
\end{equation}

\textit{Estimate for $I_1$:} By our choice of $f$, we can find a
non-increasing sequence of positive real numbers
$\{b_k\}_{k\in \mbb{Z}}$ such that
$\{x\in (0,\infty): Nf_1(x)>2^k\}=(0,b_k)$,
$\{x\in (0,\infty):2^k<Nf_1(x)\leq 2^{k+1}\}=[b_{k+1},b_k)$ and
\[2^k b_k=\int_0^{b_k} f_1(x) \,dx.\]
Consequently, we have that $b_{k+1}\leq b_k/2$, and so
$|[b_{k+1},b_k)|\geq |(0,b_k)|/2$. For simplicity, from now on we will
write $p^-_{(0,b_k)}=p^-_k$ and $p^+_{(0,b_k)}=p^+_k$.

Given this decomposition, we estimate \eqref{Nfi} by adapting the
approach in~\cite{CUFN}:
\begin{align}\label{decomp}
I_1&=\sum\limits_{k\in \mbb{Z}} \int_{b_{k+1}}^{b_k} Nf_1(x)^{p(x)} w(x)^{p(x)}\,dx\\
\nonumber&\leq \sum\limits_{k\in \mbb{Z}} \int_{b_{k+1}}^{b_k} (2^{k+1})^{p(x)} w(x)^{p(x)}\,dx\\
\nonumber&\leq 2^{p^+}\sum\limits_{k\in \mbb{Z}} \int_{b_{k+1}}^{b_k} (2^{k})^{p(x)} w(x)^{p(x)}\,dx\\
\nonumber&\lesssim\sum\limits_{k\in \mbb{Z}} \int_{b_{k+1}}^{b_k} \left(\frac{1}{b_k}\int_0^{b_k} f_1(y)\,dy\right)^{p(x)} w(x)^{p(x)}\,dx\\
\nonumber&\lesssim\sum\limits_{k\in \mbb{Z}} \int_{b_{k+1}}^{b_k} \left(\int_0^{b_k} (f_1(y)\sigma^{-1}(y))\sigma(y)\,dy\right)^{p(x)} b_k^{-p(x)}w(x)^{p(x)}\,dx\\
\nonumber&=:J.
\end{align}

Since $f_1\sigma^{-1}> 1$ or $f_1\sigma^{-1}=0$, by
\eqref{eqn:modular} we have that 
\begin{multline*}
\int_0^{b_k} (f_1(y)\sigma^{-1}(y))^{\frac{p(y)}{p^-_k}}\sigma(y)\,dy
\leq \int_0^{b_k} (f_1(y)\sigma^{-1}(y))^{p(y)}\sigma(y)\,dy\\
=\int_0^{b_k} (f_1(y)w(y))^{p(y)}\,dy\leq 1.
\end{multline*}
Thus, for each $k\in\mbb{Z}$ and $x\in (b_{k+1}, b_k)$ we have
\begin{multline*}
\left(\int_0^{b_k} (f_1(y)\sigma^{-1}(y))\sigma(y)\,dy\right)^{p(x)}
\leq \left(\int_0^{b_k} (f_1(y)\sigma^{-1}(y))^{\frac{p(y)}{p^-_k}}
\sigma(y)\,dy\right)^{p(x)}\\
\leq \left(\int_0^{b_k} (f_1(y)\sigma^{-1}(y))^{\frac{p(y)}{p^-_k}}
\sigma(y)\,dy\right)^{p^-_k}.
\end{multline*}

Hence, by Jensen's inequality,
\begin{align*}
J
&\lesssim
\sum\limits_{k\in \mbb{Z}} 
\left(\int_0^{b_k} (f_1(y)\sigma^{-1}(y))^{\frac{p(y)}{p^-_k}}
\sigma(y)\,dy\right)^{p^-_k}\int_{b_{k+1}}^{b_k}  b_k^{-p(x)}w(x)^{p(x)}\,dx\\
&= \sum\limits_{k\in \mbb{Z}} \left(\frac{1}{\sigma(0,b_k)}
\int_0^{b_k} (f_1(y)\sigma^{-1}(y))^{\frac{p(y)}{p^-_k}}
\sigma(y)\,dy\right)^{p^-_k}\\
&\qquad\times
\int_{b_{k+1}}^{b_k} \sigma(0,b_k)^{p^-_k} b_k^{-p(x)}w(x)^{p(x)}\,dx\\
&\leq\sum\limits_{k\in \mbb{Z}} 
\left(\frac{1}{\sigma(0,b_k)}\int_0^{b_k} 
(f_1(y)\sigma^{-1}(y))^{\frac{p(y)}{p^-}}\sigma(y)\,dy\right)^{p^-}\\
&\qquad\times
\int_{b_{k+1}}^{b_k} \sigma(0,b_k)^{p^-_k} b_k^{-p(x)}w(x)^{p(x)}\,dx.
\end{align*}

To complete the proof we will estimate the last integral using the
$A_{p(\cdot),0}$ condition. We will show that
\begin{equation*}
\int_{b_{k+1}}^{b_k} \sigma(0,b_k)^{p^-_k} b_k^{-p(x)}w(x)^{p(x)}\,dx\leq C\sigma(0,b_k), \quad \text{for all } k\in \mbb{Z},
\end{equation*}
or, more generally,
\begin{equation}\label{A}
\int_{0}^{b} \sigma(0,b)^{p^-_{(0,b)}} b^{-p(x)}w(x)^{p(x)}\,dx\leq C\sigma(0,b), \quad \text{for all } b>0.
\end{equation}

From the $A_{p(\cdot),0}$ condition we know that
\[\left\|w\chi_{(0,b)}\frac{\|w^{-1}\chi_{(0,b)}\|_{p'(\cdot)}}{b}\right\|_{p(\cdot)}\leq C,\]
so by the definition of the norm,
\[\int_0^b
  \left(\frac{w(x)\|w^{-1}\chi_{(0,b)}\|_{p'(\cdot)}}{b}\right)^{p(x)}\,dx\leq
  C.\] 
Hence, it will suffice  to show that
\[\sigma(0,b)^{p^-_{(0,b)}}
\leq C \sigma(0,b)\|w^{-1}\chi_{(0,b)}\|_{p'(\cdot)}^{p(x)}\]
for every $x\in (0,b)$: that is,
\begin{equation}\label{5.5}
\frac{\sigma(0,b)^{p^-_{(0,b)}-1}}{\|w^{-1}\chi_{(0,b)}\|_{p'(\cdot)}^{p(x)}}\leq C,\quad x\in (0,b).
\end{equation}

The proof of (\ref{5.5}) when $\|w^{-1}\chi_{(0,b)}\|_{p'(\cdot)}>1$
is simple. By \cite[Corollary 2.23]{CUFbook}, we have
$\sigma(0,b)\leq
\|w^{-1}\chi_{(0,b)}\|_{p'(\cdot)}^{(p')^+_{(0,b)}}$. It is easy to
see that
\[(p')^+_{(0,b)}(p^-_{(0,b)}-1)=p^-_{(0,b)},\]
and since the exponent $p^-_{(0,b)}-p(x)$ is negative,
\[\frac{\sigma(0,b)^{p^-_{(0,b)}-1}}{\|w^{-1}\chi_{(0,b)}\|_{p'(\cdot)}^{p(x)}}\leq \|w^{-1}\chi_{(0,b)}\|_{p'(\cdot)}^{p^-_{(0,b)}-p(x)}\leq 1.\]

Now suppose that $\|w^{-1}\chi_{(0,b)}\|_{p'(\cdot)}\le1$.  Then,
$\sigma(0,b)\leq \|w^{-1}\chi_{(0,b)}\|_{p'(\cdot)}^{(p')^-_{(0,b)}}$.
Then by Lemma~\ref{Plog0norm} and 
\cite[Corollary 2.23]{CUFbook}, we have 
\begin{multline*}
\frac{\sigma(0,b)}{\| w^{-1}\chi_{(0,b)}\|_{p'(\cdot)}} 
\leq\frac{\| w^{-1}\chi_{(0,b)}\|_{p'(\cdot)}^{(p')^-_{(0,b)}}}
{\| w^{-1}\chi_{(0,b)}\|_{p'(\cdot)}}
= \| w^{-1}\chi_{(0,b)}\|_{p'(\cdot)}^{(p')^-_{(0,b)}-1}\\
\le C_0\| w^{-1}\chi_{(0,b)}\|_{p'(\cdot)}^{(p')^+_{(0,b)}-1}
\le  C \sigma(0,b)^{\frac{(p')^+_{(0,b)}-1}{(p')^+_{(0,b)}}}
=C\sigma(0,b)^{\frac{1}{p^-_{(0,b)}}}.
\end{multline*}
Consequently,
\begin{equation}\label{er1}
\sigma(0,b)^{p_{(0,b)}^--1}\le C\| w^{-1}\chi_{(0,b)}\|_{p'(\cdot)}^{p_{(0,b)}^-}.
\end{equation}

We now claim that 
\begin{equation}\label{er2}
\|w^{-1}\chi_{(0,b)}\|_{p'(\cdot)}^{p_{(0,b)}^--p(x)}\le C , \quad x\in (0,b).
\end{equation}
To prove this, we first estimate the exponent:
\begin{multline*}
p(x)-p_{(0,b)}^-
=\frac{p'(x)}{p'(x)-1}-\frac{(p')^+_{(0,b)}}{(p')^+_{(0,b)}-1}
=\frac{(p')^+_{(0,b)}-p'(x)}{(p'(x)-1)((p')^+_{(0,b)}-1)}\\
\leq \frac{(p')^+_{(0,b)}-(p')^-_{(0,b)}}{(p'(x)-1)((p')^+_{(0,b)}-1)}
\le \frac{(p')^+_{(0,b)}-(p')^-_{(0,b)}}{((p')^--1)^2}, 
\quad x\in (0,b).
\end{multline*}
Thus, for every $x\in (0,b)$,
\[
\| w^{-1}\chi_{(0,b)}\|_{p'(\cdot)}^{p_{(0,b)}^--p(x)}
=\left(\| w^{-1}\chi_{(0,b)}\|_{p'(\cdot)}^{-1} 
\right)^{p(x)-p_{(0,b)}^-}
\le\| w^{-1}\chi_{(0,b)}\|_{p'(\cdot)}
^{\frac{(p')^-_{(0,b)}-(p')^+_{(0,b)}}{((p')^--1)^2}},
\]
and by  Lemma \ref{Plog0norm} applied to $w^{-1}\in A_{p'(\cdot),0}$,
the right-hand term is bounded by a constant. This proves (\ref{er2}).
Together,  (\ref{er1}) and (\ref{er2}) immediately yield
\eqref{5.5}. 

Given \eqref{5.5}, we can now estimate as follows:
\[J\leq C\sum\limits_{k\in \mbb{Z}} 
\left(\frac{1}{\sigma(0,b_k)}\int_0^{b_k} (f_1(y)\sigma^{-1}(y))
^{\frac{p(y)}{p^-}}\sigma(y)\,dy\right)^{p^-}\sigma(0,b_k).\]
Since $\sigma\in A_{\infty,0}$ and $|[b_{k+1},b_k)|\geq |(0,b_k)|/2$,
by Lemma~\ref{Ainftymedidas} there exists $0<\beta<1$ such that
\[\sigma(b_{k+1},b_k) \geq \beta \sigma(0,b_k).\]
Define the weighted maximal operator\[N_\sigma g(x)=\sup_{b> x}\frac{1}{\sigma(0,b)} \int_0^b |g(y)|
\sigma(y)\,dy, \quad x>0.\]
From the $A_{p(\cdot),0}$ condition we have that $0<\sigma(0,b)<\infty$ for every $b>0$ (see the proof of Lemma \ref{Ainfty}). This fact together with \cite[Lemma 2.2 (2)]{DMRO1} implies that $N_\sigma$ is bounded on $L^{p^-}((0,\infty),d\sigma)$ since $p^->1$. Hence,
\begin{align*}
J&\leq C\sum\limits_{k\in \mbb{Z}} \left(\frac{1}{\sigma(0,b_k)}\int_0^{b_k} (f_1(y)\sigma^{-1}(y))^{\frac{p(y)}{p^-}}\sigma(y)\,dy\right)^{p^-}\sigma(b_{k+1},b_k)\\
&=C\sum\limits_{k\in \mbb{Z}} \int_{b_{k+1}}^{b_k}\left[N_\sigma\left((f_1\sigma^{-1})^{\frac{p(\cdot)}{p^-}}\right)(x)\right]^{p^-}\sigma(x)\,dx\\
&=C\int_0^\infty \left[N_\sigma\left((f_1\sigma^{-1})^{\frac{p(\cdot)}{p^-}}\right)(x)\right]^{p^-}\sigma(x)\,dx\\
&\leq C\int_0^\infty (f_1(x)\sigma^{-1}(x))^{p(x)}\sigma(x)\,dx\\
&= C\int_0^\infty f_1(x)^{p(x)}w(x)^{p(x)}\,dx\\
&\leq C.
\end{align*}

\textit{Estimate for $I_2$:} As we did for $f_1$, we can find a
non-increasing sequence $\{b_k\}_{k\in\mbb{Z}}$ such
that $\{x\in (0,\infty): Nf_2(x)>2^k\}=(0,b_k)$,
$\{x\in (0,\infty):2^k<Nf_2(x)\leq 2^{k+1}\}=[b_{k+1},b_k)$ and
\[2^k b_k=\int_0^{b_k} f_2(x) \,dx.\]
Then we can repeat the argument used in \eqref{decomp} to get
\[I_2\leq C\sum\limits_{k\in \mbb{Z}} 
\int_{b_{k+1}}^{b_k} \left(\frac{1}{b_k}
\int_0^{b_k} f_2(y)\,dy\right)^{p(x)} w(x)^{p(x)}\,dx.\]

By Lemmas~\ref{lemma:non-integrable} and~\ref{Ainfty}, $W$ and
$\sigma$ are not integrable over $(0,\infty)$. Thus, there exists
$c_0>0$ sufficiently large such that both $W(0,c_0)\geq 1$ and
$\sigma(0,c_0)\geq 1$. Let $I_0=(0,c_0)$;  we will split
the above sum into two pieces depending on the size of $b_k$:
\begin{align*}
I_2&\leq C\left(\sum\limits_{k:b_k\leq c_0}+\sum\limits_{k:b_k>c_0}\right) \int_{b_{k+1}}^{b_k} \left(\frac{1}{b_k}\int_0^{b_k} f_2(y)\,dy\right)^{p(x)} w(x)^{p(x)}\,dx\\
&=K_1+K_2.
\end{align*}
We will estimate each sum separately.

We first estimate $K_1$.  Since $f_2\sigma^{-1}\leq 1$, by inequality
\eqref{A} and the fact that $\sigma(0,b_k)\leq C\sigma(b_{k+1},b_k)$
 (since $\sigma \in A_{\infty ,0}$), we get
\begin{align*}
K_1
&\leq C\sum\limits_{k:b_k\leq c_0} 
\int_{b_{k+1}}^{b_k} \left(\frac{1}{b_k}
\int_0^{b_k} \sigma(y)\,dy\right)^{p(x)} w(x)^{p(x)}\,dx\\
&=C\sum\limits_{k:b_k\leq c_0} 
\int_{b_{k+1}}^{b_k} \sigma(0,b_k)^{p(x)-p^-_k}
\sigma(0,b_k)^{p^-_k}b_k^{-p(x)} w(x)^{p(x)}\,dx\\
&\leq C\sum\limits_{k:b_k\leq c_0} 
\left(1+\sigma(0,b_k)\right)^{p^+_k-p^-_k}
\int_{b_{k+1}}^{b_k}\sigma(0,b_k)^{p^-_k}b_k^{-p(x)} w(x)^{p(x)}\,dx\\
&\leq C\left(1+\sigma(I_0)\right)^{p^+-p^-}
\sum\limits_{k:b_k\leq c_0} \sigma(0,b_k)\\
&\leq C\left(1+\sigma(I_0)\right)^{p^+-p^-}
\sum\limits_{k:b_k\leq c_0} \sigma(b_{k+1},b_k)\\
&\leq C\left(1+\sigma(I_0)\right)^{p^+-p^-}\sigma(I_0) \\
& \leq C.
\end{align*}

We now estimate $K_2$.  Since $I_0\subset (0,b_k)$,
$\|w\chi_{(0,b_k)}\|_{p(\cdot)}^{-1}\leq
\|w\chi_{I_0}\|_{p(\cdot)}^{-1}$, and so by the $A_{p(\cdot),0}$
condition,
\begin{multline*}
\frac{1}{b_k}
\leq C\|w\chi_{(0,b_k)}\|_{p(\cdot)}^{-1}
\|w^{-1}\chi_{(0,b_k)}\|_{p'(\cdot)}^{-1} \\
\leq C\|w\chi_{I_0}\|_{p(\cdot)}^{-1}
\|w^{-1}\chi_{(0,b_k)}\|_{p'(\cdot)}^{-1}
\leq C \|w^{-1}\chi_{(0,b_k)}\|_{p'(\cdot)}^{-1}.
\end{multline*}
Hence, by H\"older's inequality and our assumptions on  $f$,
\[\frac{1}{Cb_k}\int_0^{b_k} f_2(y)\,dy\leq
  \|w^{-1}\chi_{(0,b_k)}\|_{p'(\cdot)}^{-1}
  \|f_2w\|_{p(\cdot)}\|w^{-1}\chi_{(0,b_k)}\|_{p'(\cdot)}\leq 1.\] 

Since $\pp\in LH_\infty(0,\infty)$, we can apply 
Lemma~\ref{changeexp}
with $d\mu(x)=w(x)^{p(x)}\,dx$, to the function
$g\equiv\frac{1}{Cb_k}\int_0^{b_k} f_2(y)\,dy\leq 1$ on
$G=[b_{k+1},b_k)$, to get
\begin{align*}
K_2
&\leq C\sum\limits_{k:b_k>c_0} \int_{b_{k+1}}^{b_k} 
\left(\frac{1}{Cb_k}\int_0^{b_k} f_2(y)\,dy\right)^{p(x)} w(x)^{p(x)}\,dx\\
&\leq C_t \sum\limits_{k:b_k>c_0} \int_{b_{k+1}}^{b_k} 
C^{-p_\infty}\left(\frac{1}{b_k}\int_0^{b_k}
  f_2(y)\,dy\right)^{p_\infty} 
w(x)^{p(x)}\,dx\\
&\qquad +\sum\limits_{k:b_k>c_0} 
\int_{b_{k+1}}^{b_k} \frac{w(x)^{p(x)}}{(e+x)^{tp^-}}\,dx\\
&\leq C_t \sum\limits_{k:b_k>c_0} \left(\frac{1}{b_k}
\int_0^{b_k} f_2(y)\,dy\right)^{p_\infty} W(b_{k+1},b_k)
+\int_{c_0}^{\infty} \frac{w(x)^{p(x)}}{(e+x)^{tp^-}}\,dx.
\end{align*}

Arguing as in the proof of Lemma \ref{Ainfty}, we can choose $t>1$
sufficiently large such that the second integral in the last line is
at most $1$.  To estimate the sum in the last line we start by rewriting it as
follows: 
\begin{align*}
\sum\limits_{k:b_k>c_0} 
&\left(\frac{1}{b_k}\int_0^{b_k} f_2(y)\,dy\right)^{p_\infty} 
W(b_{k+1},b_k)\\
&=\sum\limits_{k:b_k>c_0} \left(\frac{1}{\sigma(0,b_k)}
\int_0^{b_k} f_2(y)\sigma^{-1}(y)\sigma(y)\,dy\right)^{p_\infty} 
\left(\frac{\sigma(0,b_k)}{b_k}\right)^{p_\infty} W(b_{k+1},b_k)\\
&\leq C\sum\limits_{k:b_k>c_0} \left(\frac{1}{\sigma(0,b_k)}
\int_0^{b_k} f_2(y)\sigma^{-1}(y)\sigma(y)\,dy\right)^{p_\infty} 
\sigma(b_{k+1},b_k)\\
&\qquad
  \qquad\times\frac{\sigma(0,b_k)^{p_\infty-1}W(0,b_k)}{b_k^{p_\infty}},
\end{align*}
where we have used again that $\sigma\in A_{\infty,0}$. Since
$W(I_0),\sigma(I_0)\geq 1$, by \cite[Corollary 2.23]{CUFbook} we
have
$\|w\chi_{I_0}\|_{p(\cdot)}, \|w^{-1}\chi_{I_0}\|_{p'(\cdot)}\geq 1$,
so
$\|w\chi_{(0,b_k)}\|_{p(\cdot)},
\|w^{-1}\chi_{(0,b_k)}\|_{p'(\cdot)}\geq 1$ for every
$b_k>c_0$. Hence, we can apply Corollary \ref{normmodularpinfty}
twice and the $A_{p(\cdot),0}$ condition to get
\[\sigma(0,b_k)^{p_\infty-1}
=\sigma(0,b_k)^{\frac{p_\infty}{p'_\infty}}
\leq C\|w^{-1}\chi_{(0,b_k)}\|_{p'(\cdot)}^{p_\infty}
\leq C\frac{b_k^{p_\infty}}{\|w\chi_{(0,b_k)}\|_{p(\cdot)}^{p_\infty}}
\leq C \frac{b_k^{p_\infty}}{W(0,b_k)}.\]
Thus the final term is bounded.  

To estimate the sum, recall that 
since $p_\infty\geq p^->1$, $N_\sigma$ is bounded on
$L^{p_\infty}((0,\infty),d\sigma)$. Therefore, if we apply Lemma~\ref{changeexp}
with $d\mu(x)=\sigma(x)\,dx$ and $g=f_2\sigma^{-1}\leq 1$ on
$G=[b_{k+1},b_k)$, and use the boundedness of $N_\sigma$, we get
\begin{align*}
\sum\limits_{k:b_k>c_0} &
\left(\frac{1}{b_k}
\int_0^{b_k} f_2(y)\,dy\right)^{p_\infty} W(b_{k+1},b_k)\\
&\leq C\sum\limits_{k:b_k>c_0} 
\left(\frac{1}{\sigma(0,b_k)}
\int_0^{b_k} f_2(y)\sigma^{-1}(y)\sigma(y)\,dy\right)^{p_\infty} 
\sigma(b_{k+1},b_k)\\
&\leq C\sum\limits_{k:b_k>c_0} 
\int_{b_{k+1}}^{b_k}N_\sigma(f_2\sigma^{-1})(x)^{p_\infty} 
\sigma(x)\,dx\\
&\leq C\int_0^\infty N_\sigma(f_2\sigma^{-1})(x)^{p_\infty} 
\sigma(x)\,dx\\
&\leq C\int_0^\infty (f_2(x)\sigma^{-1}(x))^{p_\infty} 
\sigma(x)\,dx\\
&\leq C_t C\int_0^\infty (f_2(x)\sigma^{-1}(x))^{p(x)} 
\sigma(x)\,dx
+\int_0^\infty \frac{\sigma(x)}{(e+x)^{tp^-}}\,dx\\
&\leq C_t+1.
\end{align*}
In the second to last inequality we again used Lemma~\ref{changeexp},
exchanging the roles of $\pp$ and $p_\infty$ and replacing $w$ by
$\sigma$.  In the final inequality we used the fact that
\[ \int_0^\infty (f_2(x)\sigma^{-1}(x))^{p(x)} 
\sigma(x)\,dx = \int_0^\infty f_2(x)^{p(x)}w(x)^{p(x)}\,dx\leq 1.\]
To estimate the final integral, we argued as we did in the proof of
Lemma~\ref{Ainfty} with $\sigma$ instead of $W$, to show that
we could choose $t$ big enough so that this term is smaller
than $1$.  This completes the proof.
\end{proof}

\section{Proofs of Theorems \ref{strongtypeS-C} 
and \ref{Calpha}}  
\label{secC-Calpha}

We will prove Theorem~\ref{strongtypeS-C} in two steps. First, we will
prove it when $\lambda=1$. Then we will give two lemmas that let us
prove it for every $0<\lambda<1$.

\begin{proof}[Proof of Theorem \ref{strongtypeS-C} for $\lambda=1$] 
  As we have remarked in the introduction, $\mc{C}f\sim Sf$; hence, it
  will suffice to prove that \textit{(i), (iii)} and \textit{(v)} are
  equivalent. Clearly, \textit{(i)} implies \textit{(iii)}.
	Similarly, \textit{(iii)}$\Rightarrow$\textit{(v)} is immediate:
        since $Nf \lesssim \mc C f$, if $\mc C$ is of weak-type,
        then $N$ is weak-type, and by Theorem \ref{strongtypeN}, we
        get that $w\in A_{p(\cdot),0}$.

        Finally, we will show that
        \textit{(v)}$\Rightarrow$\textit{(i)}. If
        $w\in A_{p(\cdot),0}$, then $w^{-1} \in A_{\cpp,0}$, and so by Theorem \ref{strongtypeN},
        $N$ is bounded on
        $L^{p(\cdot)}_w(0,\infty)$ and
        $L^{p'(\cdot)}_{w^{-1}}(0,\infty)$. Since $Hf\leq Nf$ for non-negative
        $f$, $H$ is bounded on $L^{p(\cdot)}_w(0,\infty)$ and
        $L^{p'(\cdot)}_{w^{-1}}(0,\infty)$. Then by duality we
         also have that $H^*$ is bounded on $L^{p(\cdot)}_w(0,\infty)$.
Therefore, 
\[ \|(\mc{C}f)w\|_{p(\cdot)}
\leq \|(Hf)w\|_{p(\cdot)}+\|(H^*f)w\|_{p(\cdot)}
\leq K\|fw\|_{p(\cdot)}. \]
This completes the proof when $\lambda=1$.
\end{proof}

\medskip

In order to prove Theorem \ref{strongtypeS-C}  when $\lambda\in(0,1)$,
we need two  lemmas. The first lets us relate the $A_{p(\cdot),
  q(\cdot),0}$ to the $A_{p(\cdot),0}$ condition.  This result is
analogous to a property of the $A_{p,q}$ weights in~\cite{MW} and the
$A_{\pp,\qq}$ weights proved in~\cite{BDP}.

\begin{lem}\label{clases}
  Given $\pp\in \Pp$ and $\lambda>0$, define $\qq$ as in the statement
  of Theorem~\ref{strongtypeS-C}. Then $w\in A_{p(\cdot),q(\cdot),0}$
  if and only if $w^{1/\lambda}\in A_{\lambda q(\cdot),0}$.
\end{lem}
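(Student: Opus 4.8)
The statement relates the $A_{\pp,\qq,0}$ condition to the single-exponent condition $A_{\lambda\qq,0}$, and the natural approach is to unwind both definitions and show the two are literally the same inequality after a substitution. Writing $\pp$ and $\qq$ with $1/q(x) = 1/p(x) - (1-\lambda)$, the key exponent identity to pin down first is the relationship among the three exponent functions $\qq$, $\cpp$, and $(\lambda\qq)$ and its dual $(\lambda\qq)'$. A short computation shows $\frac{1}{\lambda q(x)} = \frac{1}{\lambda}\big(\frac{1}{p(x)} - (1-\lambda)\big) = \frac{1}{\lambda p(x)} - \frac{1-\lambda}{\lambda}$, and hence $\frac{1}{(\lambda q(x))'} = 1 - \frac{1}{\lambda q(x)}$ should simplify to $\frac{1}{\lambda}\cdot\frac{1}{p'(x)}$, i.e. $(\lambda\qq)' = \lambda\cpp$. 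This is the algebraic heart of the lemma and I would verify it carefully at the outset.

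Granting that identity, the plan is to use the scaling property of the variable Lebesgue norm: for any nonnegative measurable $h$ and any $s>0$, $\|h^s\|_{r(\cdot)} = \|h\|_{sr(\cdot)}^s$, which follows directly from the definition of the modular and the Luxemburg norm. Applying this with $h = w\chi_{(0,b)}$, $s = 1/\lambda$, $r(\cdot) = \lambda\qq$ gives $\|w^{1/\lambda}\chi_{(0,b)}\|_{\lambda q(\cdot)} = \|w\chi_{(0,b)}\|_{q(\cdot)}^{1/\lambda}$; applying it with $h = w^{-1}\chi_{(0,b)}$, $s = 1/\lambda$, and (using the exponent identity) $r(\cdot) = (\lambda\qq)' = \lambda\cpp$ gives $\|w^{-1/\lambda}\chi_{(0,b)}\|_{(\lambda q(\cdot))'} = \|w^{-1}\chi_{(0,b)}\|_{p'(\cdot)}^{1/\lambda}$. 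Multiplying these, the $A_{\lambda\qq,0}$ testing quantity for $w^{1/\lambda}$ equals $\big(\|w\chi_{(0,b)}\|_{q(\cdot)}\|w^{-1}\chi_{(0,b)}\|_{p'(\cdot)}\big)^{1/\lambda}$, which is the $A_{\pp,\qq,0}$ quantity for $w$ raised to the power $1/\lambda$. Since $A_{\lambda\qq,0}$ requires this to be $\lesssim b^{\lambda\cdot\text{(dimension)}}$—here, on $(0,\infty)$, $b^{1}$—and $A_{\pp,\qq,0}$ requires its $\lambda$-th root to be $\lesssim b^{\lambda}$, the two bounds are equivalent after taking $1/\lambda$-th powers, with the constants related by a fixed power. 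One should also note that $w$ being a weight (finite, positive a.e., locally integrable) is equivalent to $w^{1/\lambda}$ being one, so both conditions make sense simultaneously.

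The main obstacle is purely bookkeeping: making sure the exponent bounds $p^+ < \tfrac{1}{1-\lambda}$ (which guarantees $\qq \in \Pp$, i.e. $q^+ < \infty$) propagate correctly so that $\lambda\qq$ is a legitimate element of $\Pp(0,\infty)$ with $(\lambda q)^- \geq \lambda > 0$, and that $(\lambda\qq)'$ is well-defined—this needs $(\lambda q)^- > 1$ wherever duality is invoked, but in fact the scaling identity for the conjugate exponent norm holds formally even at the endpoint, so I would phrase the duality step as the exponent identity $(\lambda\qq)' = \lambda\cpp$ together with the modular scaling, avoiding any delicate appeal to the associate-space characterization. A secondary subtlety is that on $(0,\infty)$ the relevant power of $b$ in the $A_{\infty,0}$-type definitions is $b^{\lambda}$ on one side and $b^{1}$ on the other; I would state Lemma~\ref{clases} and its proof so that the passage $b^{\lambda} \leftrightarrow (b^{1})^{\lambda}$ is explicit, matching Definition~\ref{Apq0-var} exactly. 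Once the exponent identity and the scaling identity are in hand, the proof is a two-line chain of equalities followed by taking $1/\lambda$-th powers.
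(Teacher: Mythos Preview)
Your proposal is correct and is exactly the standard argument: the exponent identity $(\lambda q(\cdot))'=\lambda p'(\cdot)$ together with the norm-scaling identity $\|h^{1/\lambda}\|_{\lambda r(\cdot)}=\|h\|_{r(\cdot)}^{1/\lambda}$ reduces the $A_{\lambda q(\cdot),0}$ testing quantity for $w^{1/\lambda}$ to the $1/\lambda$-th power of the $A_{p(\cdot),q(\cdot),0}$ quantity for $w$, and the bounds $Cb$ and $Cb^\lambda$ match under the same power. The paper does not spell this out but simply invokes \cite[Lemma~4.1(i)]{BDP} (with intervals $(0,b)$, $n=1$, $\alpha=1-\lambda$), whose proof is precisely the computation you outline, so your approach coincides with the one the paper cites.
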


\begin{proof}
  The proof is essentially the same as the proof of the corresponding
  result for the $A_{\pp,\qq}$ and $A_{\pp}$ classes.  More precisely,
  it is enough to consider intervals of the form $\{(0,b) : b>0 \}$,
  $n=1$, and $\alpha=1-\lambda$ in the proof of \cite[Lemma 4.1~(i)]{BDP}.
\end{proof}

The second lemma is a Hedberg-type inequality (see \cite[Eq.(5)]{Hed}) which
lets us control $S_\lambda$ with $S=S_1$. 

\begin{lem}\label{Hedberg}
  Given $\pp\in \Pp(0,\infty)$ and $\lambda>0$, define $\qq$ as in the statement
  of Theorem~\ref{strongtypeS-C}.  Let $w$ be a weight and let $f$ be
  a non-negative function in $L^{p(\cdot)}(0,\infty)$. Then for every
  $x\in (0,\infty)$,
\[S_\lambda\left(\frac{f}{w}\right)(x)\leq \left[S
    \left(g^{1/\lambda}\right)(x)\right]^\lambda \left(\int_0^\infty
    f(y)^{p(y)}\,dy\right)^{1-\lambda},\]
where $g(y)=f(y)^{p(y)/q(y)}w^{-1}(y)$.
\end{lem}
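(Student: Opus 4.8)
The plan is to establish the inequality pointwise in $x$ by factoring the integrand of $S_\lambda(f/w)(x)$ as a product and then applying H\"older's inequality with the conjugate pair of exponents $1/\lambda$ and $1/(1-\lambda)$; these are the correct exponents precisely because $\lambda+(1-\lambda)=1$. Since the case $\lambda=1$ is a trivial identity (then $\qq=\pp$, $g=f/w$, and $g^{1/\lambda}=g$), we may assume $0<\lambda<1$. Note that $g=f^{\pp/\qq}w^{-1}\geq 0$, so $g^{1/\lambda}$ is well defined and measurable.

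First I would fix $x\in(0,\infty)$ and, at each $y$ with $f(y)>0$, rewrite
\[
\frac{f(y)}{w(y)\,(x+y)^\lambda}
=
\left(\frac{f(y)^{p(y)/(\lambda q(y))}\,w(y)^{-1/\lambda}}{x+y}\right)^{\lambda}
\left(f(y)^{p(y)}\right)^{1-\lambda}
=
\left(\frac{g(y)^{1/\lambda}}{x+y}\right)^{\lambda}
\left(f(y)^{p(y)}\right)^{1-\lambda},
\]
the identity being read as $0=0$ at points where $f(y)=0$. Checking it amounts to matching the powers of $w$ and of $x+y$ (immediate) and the power of $f$, which balances because $\frac{p(y)}{q(y)}+p(y)(1-\lambda)=1$ --- merely a restatement of $\frac{1}{q(y)}=\frac{1}{p(y)}-(1-\lambda)$.

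Next I would integrate this identity over $y\in(0,\infty)$ and apply H\"older's inequality with exponents $1/\lambda$ and $1/(1-\lambda)$:
\[
S_\lambda\left(\frac{f}{w}\right)(x)
=
\int_0^\infty \left(\frac{g(y)^{1/\lambda}}{x+y}\right)^{\lambda}
\left(f(y)^{p(y)}\right)^{1-\lambda}\,dy
\leq
\left(\int_0^\infty \frac{g(y)^{1/\lambda}}{x+y}\,dy\right)^{\lambda}
\left(\int_0^\infty f(y)^{p(y)}\,dy\right)^{1-\lambda},
\]
and the first factor on the right is exactly $\left[S(g^{1/\lambda})(x)\right]^\lambda$, which is the claimed bound. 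If either integral on the right is infinite the inequality is vacuous, so no hypothesis on $f$ beyond measurability is actually used; the assumption $f\in\Lp(0,\infty)$ is only there to fix the setting.

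I do not anticipate a real obstacle: all the content sits in the choice of factorization, and that choice is essentially forced --- the two integrals that must appear on the right are $S(g^{1/\lambda})(x)$ and $\int_0^\infty f(y)^{p(y)}\,dy$, and the relation defining $\qq$ is exactly what makes the leftover powers of $f$ cancel. The only points deserving a word of care are the measurability of $g$ (clear) and the behaviour of the factorization where $f$ vanishes (both sides are $0$), both routine.
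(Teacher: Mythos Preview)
Your proof is correct and follows essentially the same approach as the paper: factor the integrand of $S_\lambda(f/w)(x)$ using the relation $1-p(y)/q(y)=(1-\lambda)p(y)$ and then apply H\"older's inequality with the conjugate pair $1/\lambda$ and $1/(1-\lambda)$. The paper writes the factorization as $f(y)w(y)^{-1}=g(y)f(y)^{(1-\lambda)p(y)}$ before integrating, while you write it with the kernel already absorbed, but the two computations are identical.
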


\begin{proof}
  We adapt the argument given in \cite{GPS10} for the fractional
  maximal operator with weights (see also \cite{BDP, GPS12}).  From
  the definition of $g$ and the relation between $\pp$ and $\qq$ we get
\[f(y)w^{-1}(y)=g(y)f(y)^{1-p(y)/q(y)}=g(y)f(y)^{(1-\lambda)p(y)}.\]
Thus, if we apply H\"older's inequality with $1/\lambda>1$ and
$(1/\lambda)'=1/(1-\lambda)$, we get
\begin{align*}
S_\lambda\left(\frac{f}{w}\right)(x)
&=\int_0^\infty \frac{g(y)}{(x+y)^\lambda}f(y)^{(1-\lambda)p(y)} \,dy\\
&\leq \left(\int_0^\infty \frac{g(y)^{1/\lambda}}{x+y}
  \,dy\right)^\lambda 
\left(\int_0^\infty f(y)^{p(y)} \,dy\right)^{1-\lambda}\\
&=\left[S \left(g^{1/\lambda}\right)(x)\right]^\lambda 
\left(\int_0^\infty f(y)^{p(y)}\,dy\right)^{1-\lambda}.\qedhere
\end{align*}
\end{proof}

\begin{proof}[Proof of Theorem \ref{strongtypeS-C} for $\lambda\in
  (0,1)$] 
  As in the case $\lambda=1$, since $S_\lambda\sim \mc{C}_\lambda$ and
  the strong-type implies the weak-type, it is enough to prove that 
  \textit{(iii)}$\Rightarrow$\textit{(v)} and
  \textit{(v)}$\Rightarrow$\textit{(i)}.
	
  To prove \textit{(iii)}$\Rightarrow$\textit{(v)} we argue as in the
  proof of necessity in Theorem~\ref{strongtypeN}.  Fix $b>0$; then
  there exists a non-negative function   $g\in L^{p(\cdot)}(0,\infty)$
  such that  $\|g\|_{p(\cdot)}\leq 1$ and
\[\|w^{-1}\chi_{(0,b)}\|_{p'(\cdot)}\sim \int_0^b w^{-1}(y)g(y)\,dy.\]
Without loss of generality we may assume that
$\|w^{-1}\chi_{(0,b)}\|_{p'(\cdot)}>0$.  Define
$f=\chi_{(0,b)}w^{-1}g$; then $f\in L^{p(\cdot)}_w(0,\infty)$ with
$\|fw\|_{p(\cdot)}= \|\chi_{(0,b)}g\|_{p(\cdot)}\leq 1$. If
$x\in(0,b)$, 
\begin{multline*}
S_\lambda f(x)
=\int_0^b \frac{w^{-1}(y)g(y)}{(x+y)^\lambda}\,dy
> \frac{1}{(2b)^\lambda}\int_0^b w^{-1}(y)g(y)\,dy\\
\sim \frac{\|w^{-1}\chi_{(0,b)}\|_{p'(\cdot)}}{b^\lambda}:=\mu>0.
\end{multline*}
Hence,  $(0,b)\subset \{x\in (0,\infty): S_\lambda f(x)>\mu\}$, so
by the weak-type inequality we have that
\begin{equation*}
\frac{\|w^{-1}\chi_{(0,b)}\|_{p'(\cdot)}}{b^\lambda}\|w\chi_{(0,b)}\|_{q(\cdot)}
\lesssim  \mu \|w\chi_{\{x\in (0,\infty): S_\lambda
  f(x)>\mu\}}\|_{q(\cdot)}
\leq  K\|fw\|_{p(\cdot)}\leq K,
\end{equation*}
or, equivalently,
\[\|w\chi_{(0,b)}\|_{q(\cdot)}\|w^{-1}\chi_{(0,b)}\|_{p'(\cdot)} \leq
  C b^\lambda.\]
Since $b>0$ is arbitrary, we get that $w \in A_{p(\cdot), q(\cdot),0}$.

To prove \textit{(v)}$\Rightarrow$\textit{(i)}, fix
$w\in A_{p(\cdot),q(\cdot),0}$.  To show that this implies 
$\|\left(S_\lambda f\right) w\|_{q(\cdot)}\leq C\|fw\|_{p(\cdot)}$
for every $f\in L^{p(\cdot)}_w(0,\infty)$,  we will prove an
equivalent inequality:  for every $f\in L^{p(\cdot)}(0,\infty)$,
\[\|\left(S_\lambda (f/w)\right) w\|_{q(\cdot)}\leq
  C\|f\|_{p(\cdot)}. \]

Without loss of generality, we may assume $\|f\|_{p(\cdot)}=1$, so
that $\int_0^\infty f(y)^{p(y)}\,dy=1$.  We will show that
\[\|\left(S_\lambda (f/w)\right) w\|_{q(\cdot)}\leq C. \]
By Lemma \ref{Hedberg}, we have
\[S_\lambda\left(\frac{f}{w}\right)(x)
\leq \left[S \left(g^{1/\lambda}\right)(x)\right]^\lambda\]
with $g(y)=f(y)^{p(y)/q(y)}w^{-1}(y)$. 
Therefore,
\[\|\left(S_\lambda (f/w)\right) w\|_{q(\cdot)}
\leq \|S (g^{1/\lambda})^\lambda w \|_{q(\cdot)}
=\|S (g^{1/\lambda}) w^{1/\lambda} \|_{\lambda q(\cdot)}^\lambda.\]

Observe that
\[\int_0^\infty \left(g(y)^{1/\lambda}
    w(y)^{1/\lambda}\right)^{\lambda q(y)} \,dy
=\int_0^\infty \left(f(y)^{p(y)/q(y)}\right)^{q(y)} \,dy=1,\]
so $g^{1/\lambda}\in L^{\lambda q(\cdot)}_{w^{1/\lambda}}(0,\infty)$
with
$\|g^{1/\lambda}w^{1/\lambda}\|_{\lambda
  q(\cdot)}=\|gw\|_{q(\cdot)}=1$.  Further, we have that
$\qq\in LH_0(0,\infty)\cap LH_\infty(0,\infty)$ since $\pp$ belongs to
both classes and $p^+<1/(1-\lambda)$.  By Lemma \ref{clases},
$w^{1/\lambda}\in A_{\lambda q(\cdot),0}$ so by the case $\lambda=1$
proved above, $S$ is bounded on $L^{\lambda
  q(\cdot)}_{w^{1/\lambda}}$. Therefore, we have that
\[\|\left(S_\lambda (f/w)\right) w\|_{q(\cdot)}
\leq C^\lambda \|g^{1/\lambda}w^{1/\lambda}\|_{\lambda q(\cdot)}\leq C.\]
This completes the proof.
\end{proof}

\begin{proof}[Proof of Theorem \ref{Calpha}] 
  Since $\mc{C}^\alpha f\leq \mc{C}f$ for non-negative functions $f$,
  by Theorem \ref{strongtypeS-C} and the fact that the
  strong-type inequality  implies the weak-type, it suffices to show that
  \textit{\ref{weakCalpha}} implies \textit{\ref{wAp0}}.
	
  We argue as we did for the proof of necessity above.  Fix $b>0$;
  then by duality there exists a non-negative function $g$ such that
  $\|g\|_{p(\cdot)}\leq 1$  and
\[\|w^{-1}\chi_{(0,b)}\|_{p'(\cdot)}\sim \int_0^b w^{-1}(x)g(x)
  \,dx.\]
Again we may assume $\|w^{-1}\chi_{(0,b)}\|_{p'(\cdot)}>0$. Let
$f=\chi_{(0,b)}gw^{-1}$; then for  $t\in (2b,3b)$, 
\begin{multline*}
\mc{C}^\alpha f(t)
\geq \frac{1}{(3b)^{\alpha+1}}\int_0^b (t-x)^{\alpha} w^{-1}(x)g(x) \,dx\\
\geq \frac{1}{3^{\alpha+1}b}\int_0^b w^{-1}(x)g(x) \,dx
\sim \frac{\|w^{-1}\chi_{(0,b)}\|_{p'(\cdot)}}{b}:=\mu>0.
\end{multline*}
Therefore,
\[(2b,3b)\subset \{t\in (0,\infty): \mc{C}^\alpha f(t)>\mu\}.\]
By the weak-type inequality and the choice of $g$, we get that
\begin{equation}\label{2b3b}
\|\chi_{(2b,3b)}w\|_{p(\cdot)}\frac{\|w^{-1}\chi_{(0,b)}\|_{p'(\cdot)}}{b}
\lesssim K\|w\chi_{(0,b)}w^{-1}g\|_{p(\cdot)}\leq K.
\end{equation}

On the other hand, if we let $f=\chi_{(2b,3b)}$, then it follows
from~\eqref{2b3b} that $f \in L^{p(\cdot)}_w(0,\infty)$. 
Thus, if we take $t\in(0,b)$, we have that $\mc{C}^\alpha f(t)\geq
3^{-\alpha-1}$, and so
\[(0,b)\subset \{t\in (0,\infty): \mc{C}^\alpha
  f(t)>3^{-\alpha-1}\}.\]
Therefore,  again by the weak-type inequality,
we have that
\[3^{-\alpha-1}\|w\chi_{(0,b)}\|_{p(\cdot)}\leq
  K\|w\chi_{(2b,3b)}\|_{p(\cdot)}.\]
If we combine this inequality with \eqref{2b3b}, we see that
$w\in A_{p(\cdot),0}$.
\end{proof}

\section{Proofs of Theorems~\ref{thm:n-dim}, \ref{Hilbertineq} and
  \ref{SoriaWeiss}}  
\label{app}

\begin{proof}[Proof of Theorem~\ref{thm:n-dim}]
  The proof of these results in $\R^n$, $n>1$, is essentially the same
  as the proof of the one-dimensional results on $(0,\infty)$.  In the
  definition of $A_{\infty,0}$, we replace $b$ in the denominator by
  $b^n$ or by the volume of the ball $B(0,b)$. The proof of
  Lemma~\ref{lemma:non-integrable} relies on results
  from~\cite{DMRO2}, but these are for abstract bases over measure
  spaces and so hold in higher dimensions.  In the proofs of the
  lemmas in Section~\ref{lemmas} and in the proofs of
  Theorem~\ref{strongtypeN} and of Theorem~\ref{strongtypeS-C} for
  $\lambda=1$, we replace $(0,\infty)$ by $\R^n$, the intervals
  $(0,b)$ by the balls $B(0,b)$ and intervals of the form $(a,b)$ by
  the annuli $\{ x\in \R^n : a<|x|<b \}$.  

The proofs then go through
  exactly the same as in the one-dimensional case. We used the fact that the weighted maximal operator
 $N_\sigma$ is bounded on $L^p((0,\infty),d\sigma)$ for any $1<p<\infty$, proved in \cite{DMRO1}. Now, we need to show that the corresponding operator on $\mathbb R^n$, given by
\[N_\sigma f(x) 
= \sup_{b>|x|} \frac{1}{\sigma(B(0,b))}
\int_{B(0,b)} |f(y)|\sigma(y)\,dy\]
is bounded on $L^p(\mathbb R^n, d\sigma)$ for $1<p<\infty$.  We include the proof below, which was sketched in \cite[pp. 559-560]{Du}. First, notice that the $A_{p(\cdot),0}$ condition will guarantee $0<\sigma(B(0,b))<\infty$. Then, we can show that $N_\sigma$ satisfies a weak $(1,1)$ inequality, as in the one-dimensional case (see \cite[Lemma 2.2]{DMRO1}). Suppose $f$ is
a bounded function of  compact support. Then, we have that given any $\mu>0$, there exists $b=b(\mu)>0$ such that
\[ \{ x \in \R^n : N_\sigma f(x) > \mu \} = B(0,b), \]
and 
\[ \mu = \frac{1}{\sigma(B(0,b))}
\int_{B(0,b)} |f(y)|\sigma(y)\,dy. \]
But then we immediately get the weak $(1,1)$ inequality:
\[ \sigma(\{ x \in \R^n : N_\sigma f(x) > \mu \})
= \sigma(B(0,b))
\leq \frac{1}{\mu}\int_{\R^n} |f(y)|\sigma(y)\,dy.\]
That $ N_\sigma$ is bounded on $L^p(\mathbb R^n, d\sigma)$ for $p>1$ now follows from
Marcinkiewicz interpolation.
\end{proof}

\begin{proof}[Proof of Theorem \ref{Hilbertineq}]
By Theorem~\ref{thm:n-dim}, $w\in A_{p(\cdot),0}$ is equivalent to
\[\|(Sf)w\|_{p(\cdot)}\leq C \|fw\|_{p(\cdot)}.\]
By duality, this inequality can be rewritten as
\[\sup\limits_{\|gw^{-1}\|_{p'(\cdot)}\leq 1} 
\int_{\mbb{R}^n} Sf(x) g(x) \,dx \leq C \|fw\|_{p(\cdot)},\]
which in turn is equivalent to
\[\sup\limits_{\substack{g\in L_{w^{-1}}^{p'(\cdot)}(\mbb{R}^n)\\
g      \geq 0}}
\int_{\mbb{R}^n} \left(\int_{\mbb R^n}
  \frac{f(x)}{|x|^n+|y|^n}\,dy\right) 
\frac{g(x)}{\|gw^{-1}\|_{p'(\cdot)}} \,dx 
\leq C \|fw\|_{p(\cdot)}.\]
This in turn is equivalent to the desired inequality \eqref{Hilbert}.
\end{proof}

\begin{proof}[Proof of Theorem \ref{SoriaWeiss}]
  For each $k\in \mbb{Z}$, define the annuli
  $I_k=\{x\in \mbb{R}^n: 2^{k-1}\leq|x|< 2^k\}$ and
  $I_k^*=\{x\in \mbb{R}^n: 2^{k-2}\leq|x|< 2^{k+1}\}$. Note that the
  $I_k^*$ have bounded overlap.  
Given $f\in L^{p(\cdot)}_w(\mbb{R}^n)$, let $f_{k,0}=f\chi_{I_k^*}$ and
$f_{k,1}=f-f_{k,0}$.  Then we have that
\begin{multline*}
T^*f(x)
=\sum\limits_{k\in \mbb{Z}} T^*f(x)\chi_{I_k}(x)\\ 
\leq \sum\limits_{k\in \mbb{Z}} T^*f_{k,0}(x)\chi_{I_k}(x)+\sum\limits_{k\in \mbb{Z}} T^*f_{k,1}(x)\chi_{I_k}(x)
:=T_0^*f(x)+T_1^*f(x).
\end{multline*}

For the operator $T_0^*$, we will use duality, \eqref{constannuli},
the boundedness of $T^*$ and \eqref{classG} to get
\begin{align*}
\|wT^*_0 f\|_{p(\cdot)}
&\leq C \sup\limits_{\|g\|_{p'(\cdot)}\leq 1}
\int_{\mbb{R}^n} T_0^* f(x) g(x) w(x) \,dx\\
&\leq C\sup\limits_{\|g\|_{p'(\cdot)}\leq 1}
\sum\limits_{k\in\mbb{Z}}\int_{I_k}|T^*f_{k,0}(x)\|g(x)|w(x)\,dx\\
&\leq C\sup\limits_{\|g\|_{p'(\cdot)}\leq 1}
\sum\limits_{k\in\mbb{Z}} 
\sup\limits_{I_k^*}w(x)\int_{I_k}|T^*f_{k,0}(x)\|g(x)|\,dx\\
&\leq C\sup\limits_{\|g\|_{p'(\cdot)}\leq 1}
\sum\limits_{k\in\mbb{Z}} 
\sup\limits_{I_k^*}w(x) \|T^*f_{k,0}\|_{p(\cdot)}\|g\chi_{I_k}\|_{p'(\cdot)}\\
&\leq C\sup\limits_{\|g\|_{p'(\cdot)}\leq 1}
\sum\limits_{k\in\mbb{Z}} 
\inf\limits_{I_k^*}w(x) \|f\chi_{I_k^*}\|_{p(\cdot)}\|g\chi_{I_k^*}\|_{p'(\cdot)}\\
&\leq C\sup\limits_{\|g\|_{p'(\cdot)}\leq 1}
\sum\limits_{k\in\mbb{Z}} 
\|wf\chi_{I_k^*}\|_{p(\cdot)}\|g\chi_{I_k^*}\|_{p'(\cdot)}\\
&\leq C\sup\limits_{\|g\|_{p'(\cdot)}\leq 1}
\|fw\|_{p(\cdot)}\|g\|_{p'(\cdot)}\\
&\leq  C\|fw\|_{p(\cdot)}.
\end{align*}

In order to estimate $T_1^*$, first note that for $x\in I_k$ and
$y\in (I_k^*)^c$, $|x-y|\sim |x|+|y|$.  Then by  \eqref{boundKj} we
have the pointwise estimate
\[T_1^*f(x)\leq C_0\sum\limits_{k\in\mbb{Z}}
\left(\int_{(I_k^*)^c}\frac{f(y)}{|x-y|^n}\,dy\right)\chi_{I_k}(x)
\leq C_0\int_{\mbb{R}^n}\frac{|f(y)|}{|x|^n+|y|^n}\,dy
= C_0 Sf(x).\]
Since $w\in A_{p(\cdot),0}$, the desired bound follows from
Theorem~\ref{thm:n-dim}.  
\end{proof}

\bibliographystyle{acm}
\bibliography{ref}

\def\ocirc#1{\ifmmode\setbox0=\hbox{$#1$}\dimen0=\ht0 \advance\dimen0
  by1pt\rlap{\hbox to\wd0{\hss\raise\dimen0
  \hbox{\hskip.2em$\scriptscriptstyle\circ$}\hss}}#1\else {\accent"17 #1}\fi}
\begin{thebibliography}{10}

\bibitem{A}
{\sc Andersen, K.~F.}
\newblock Weighted inequalities for the {S}tieltjes transformation and
  {H}ilbert's double series.
\newblock {\em Proc. Roy. Soc. Edinburgh Sect. A 86}, 1-2 (1980), 75--84.

\bibitem{AM}
{\sc Ari\~{n}o, M.~A., and Muckenhoupt, B.}
\newblock Maximal functions on classical {L}orentz spaces and {H}ardy's
  inequality with weights for nonincreasing functions.
\newblock {\em Trans. Amer. Math. Soc. 320}, 2 (1990), 727--735.

\bibitem{B}
{\sc Bandaliev, R.~A.}
\newblock The boundedness of certain sublinear operator in the weighted
  variable {L}ebesgue spaces.
\newblock {\em Czechoslovak Math. J. 60(135)}, 2 (2010), 327--337.

\bibitem{B2}
{\sc Bandaliev, R.~A.}
\newblock Corrections to the paper ``{T}he boundedness of certain sublinear
  operator in the weighted variable {L}ebesgue spaces'' [{MR}2657952].
\newblock {\em Czechoslovak Math. J. 63(138)}, 4 (2013), 1149--1152.

\bibitem{BMR}
{\sc Bastero, J., Milman, M., and Ruiz, F.~J.}
\newblock On the connection between weighted norm inequalities, commutators and
  real interpolation.
\newblock {\em Mem. Amer. Math. Soc. 154}, 731 (2001), viii+80.

\bibitem{Ber}
{\sc Berezhnoi, E.~I.}
\newblock Two-weighted estimations for the {H}ardy-{L}ittlewood maximal
  function in ideal {B}anach spaces.
\newblock {\em Proc. Amer. Math. Soc. 127}, 1 (1999), 79--87.

\bibitem{BDP}
{\sc Bernardis, A., Dalmasso, E., and Pradolini, G.}
\newblock Generalized maximal functions and related operators on weighted
  {M}usielak-{O}rlicz spaces.
\newblock {\em Ann. Acad. Sci. Fenn. Math. 39}, 1 (2014), 23--50.

\bibitem{CruzUribe:2016ji}
{\sc Cruz-Uribe, D.}
\newblock {Two weight inequalities for fractional integral operators and
  commutators}.
\newblock In {\em VI International Course of Mathematical Analysis in
  Andalusia\/} (2016), F.~J. Martin-Reyes, Ed., World Scientific, pp.~25--85.

\bibitem{CUDH}
{\sc Cruz-Uribe, D., Diening, L., and H\"{a}st\"{o}, P.}
\newblock The maximal operator on weighted variable {L}ebesgue spaces.
\newblock {\em Fract. Calc. Appl. Anal. 14}, 3 (2011), 361--374.

\bibitem{CUFbook}
{\sc Cruz-Uribe, D., and Fiorenza, A.}
\newblock {\em Variable {L}ebesgue spaces}.
\newblock Applied and Numerical Harmonic Analysis. Birkh\"auser/Springer,
  Heidelberg, 2013.
\newblock Foundations and harmonic analysis.

\bibitem{CUFN}
{\sc Cruz-Uribe, D., Fiorenza, A., and Neugebauer, C.~J.}
\newblock Weighted norm inequalities for the maximal operator on variable
  {L}ebesgue spaces.
\newblock {\em J. Math. Anal. Appl. 394}, 2 (2012), 744--760.

\bibitem{CUM}
{\sc Cruz-Uribe, D., and Mamedov, F.~I.}
\newblock On a general weighted {H}ardy type inequality in the variable
  exponent {L}ebesgue spaces.
\newblock {\em Rev. Mat. Complut. 25}, 2 (2012), 335--367.

\bibitem{CUW}
{\sc Cruz-Uribe, D., and Wang, L.-A.}
\newblock Extrapolation and weighted norm inequalities in the variable
  {L}ebesgue spaces.
\newblock {\em Trans. Amer. Math. Soc. 369}, 2 (2017), 1205--1235.

\bibitem{Diening}
{\sc Diening, L.}
\newblock Maximal function on generalized {L}ebesgue spaces {$L\sp
  {p(\cdot)}$}.
\newblock {\em Math. Inequal. Appl. 7}, 2 (2004), 245--253.

\bibitem{DHHR}
{\sc Diening, L., Harjulehto, P., H{\"a}st{\"o}, P., and
  R{\ocirc{u}}{\v{z}}i{\v{c}}ka, M.}
\newblock {\em Lebesgue and {S}obolev spaces with variable exponents},
  vol.~2017 of {\em Lecture Notes in Mathematics}.
\newblock Springer, Heidelberg, 2011.

\bibitem{DS}
{\sc Diening, L., and Samko, S.}
\newblock Hardy inequality in variable exponent {L}ebesgue spaces.
\newblock {\em Fract. Calc. Appl. Anal. 10}, 1 (2007), 1--18.

\bibitem{Du}
{\sc Duoandikoetxea, J.}
\newblock Fractional integrals on radial functions with applications to
  weighted inequalities.
\newblock {\em Ann. Mat. Pura Appl. (4) 192}, 4 (2013), 553--568.

\bibitem{DMRO1}
{\sc Duoandikoetxea, J., Mart{\'{\i}}n-Reyes, F.~J., and Ombrosi, S.}
\newblock Calder\'on weights as {M}uckenhoupt weights.
\newblock {\em Indiana Univ. Math. J. 62}, 3 (2013), 891--910.

\bibitem{DMRO2}
{\sc Duoandikoetxea, J., Mart\'{i}n-Reyes, F.~J., and Ombrosi, S.}
\newblock On the {$A_\infty$} conditions for general bases.
\newblock {\em Math. Z. 282}, 3-4 (2016), 955--972.

\bibitem{GKP13}
{\sc Gogatishvili, A., Kufner, A., and Persson, L.-E.}
\newblock The weighted {S}tieltjes inequality and applications.
\newblock {\em Math. Nachr. 286}, 7 (2013), 659--668.

\bibitem{GKPW}
{\sc Gogatishvili, A., Kufner, A., Persson, L.-E., and Wedestig, A.}
\newblock An equivalence theorem for integral conditions related to {H}ardy's
  inequality.
\newblock {\em Real Anal. Exchange 29}, 2 (2003/04), 867--880.

\bibitem{GPSW}
{\sc Gogatishvili, A., Persson, L.-E., Stepanov, V.~D., and Wall, P.}
\newblock Some scales of equivalent conditions to characterize the {S}tieltjes
  inequality: the case {$q < p$}.
\newblock {\em Math. Nachr. 287}, 2-3 (2014), 242--253.

\bibitem{GPS10}
{\sc Gorosito, O., Pradolini, G., and Salinas, O.}
\newblock Boundedness of fractional operators in weighted variable exponent
  spaces with non doubling measures.
\newblock {\em Czechoslovak Math. J. 60(135)}, 4 (2010), 1007--1023.

\bibitem{GPS12}
{\sc Gorosito, O., Pradolini, G., and Salinas, O.}
\newblock Boundedness of the fractional maximal operator on variable exponent
  {L}ebesgue spaces: a short proof.
\newblock {\em Rev. Un. Mat. Argentina 53}, 1 (2012), 25--27.

\bibitem{H}
{\sc Hardy, G.~H.}
\newblock Note on a theorem of {H}ilbert concerning series of positive terms.
\newblock {\em Proc. Lond. Math. Soc. (2)}, 23 (1925).
\newblock Records of Proc. XLV-XLVI.

\bibitem{HLP}
{\sc Hardy, G.~H., Littlewood, J.~E., and P\'{o}lya, G.}
\newblock {\em Inequalities}.
\newblock Cambridge Mathematical Library. Cambridge University Press,
  Cambridge, 1988.
\newblock Reprint of the 1952 edition.

\bibitem{HM}
{\sc Harman, A., and Mamedov, F.~I.}
\newblock On boundedness of weighted {H}ardy operator in {$L^{p(\cdot)}$} and
  regularity condition.
\newblock {\em J. Inequal. Appl.\/} (2010), Art. ID 837951, 14.

\bibitem{Hed}
{\sc Hedberg, L.}
\newblock On certain convolution inequalities.
\newblock {\em Proc. Amer. Math. Soc. 36\/} (1972), 505--510.

\bibitem{Hy}
{\sc Hyt\"{o}nen, T.}
\newblock The {$A_2$} theorem: remarks and complements.
\newblock In {\em Harmonic analysis and partial differential equations},
  vol.~612 of {\em Contemp. Math.} Amer. Math. Soc., Providence, RI, 2014,
  pp.~91--106.

\bibitem{KR}
{\sc Kov{\'a}{\v{c}}ik, O., and R{\'a}kosn{\'{\i}}k, J.}
\newblock On spaces {$L\sp {p(x)}$} and {$W\sp {k,p(x)}$}.
\newblock {\em Czechoslovak Math. J. 41(116)}, 4 (1991), 592--618.

\bibitem{L}
{\sc Lerner, A.~K.}
\newblock On a dual property of the maximal operator on weighted variable
  {$L^p$} spaces.
\newblock In {\em Functional analysis, harmonic analysis, and image processing:
  a collection of papers in honor of {B}j\"{o}rn {J}awerth}, vol.~693 of {\em
  Contemp. Math.} Amer. Math. Soc., Providence, RI, 2017, pp.~283--300.

\bibitem{MH09}
{\sc Mamedov, F.~I., and Harman, A.}
\newblock On a weighted inequality of {H}ardy type in spaces {$L^{p(\cdot)}$}.
\newblock {\em J. Math. Anal. Appl. 353}, 2 (2009), 521--530.

\bibitem{MH10}
{\sc Mamedov, F.~I., and Harman, A.}
\newblock On a {H}ardy type general weighted inequality in spaces
  {$L^{p(\cdot)}$}.
\newblock {\em Integral Equations Operator Theory 66}, 4 (2010), 565--592.

\bibitem{MZ}
{\sc Mamedov, F.~I., and Zeren, Y.}
\newblock On equivalent conditions for the general weighted {H}ardy type
  inequality in space {$L^{p(\cdot)}$}.
\newblock {\em Z. Anal. Anwend. 31}, 1 (2012), 55--74.

\bibitem{MCMO}
{\sc Mashiyev, R.~A., \c{C}eki\c{c}, B., Mamedov, F.~I., and Ogras, S.}
\newblock Hardy's inequality in power-type weighted {$L^{p(\cdot)}(0,\infty)$}
  spaces.
\newblock {\em J. Math. Anal. Appl. 334}, 1 (2007), 289--298.

\bibitem{M1}
{\sc Muckenhoupt, B.}
\newblock Hardy's inequality with weights.
\newblock {\em Studia Math. 44\/} (1972), 31--38.
\newblock Collection of articles honoring the completion by Antoni Zygmund of
  50 years of scientific activity, I.

\bibitem{M2}
{\sc Muckenhoupt, B.}
\newblock Weighted norm inequalities for the {H}ardy maximal function.
\newblock {\em Trans. Amer. Math. Soc. 165\/} (1972), 207--226.

\bibitem{MW}
{\sc Muckenhoupt, B., and Wheeden, R.}
\newblock Weighted norm inequalities for fractional integrals.
\newblock {\em Trans. Amer. Math. Soc. 192\/} (1974), 261--274.

\bibitem{O}
{\sc Orlicz, W.}
\newblock {\"Uber konjugierte Exponentenfolgen.}
\newblock {\em Studia Math. 3\/} (1931), 200--211.

\bibitem{Sch}
{\sc Schur, J.}
\newblock Bemerkungen zur {T}heorie der beschr\"{a}nkten {B}ilinearformen mit
  unendlich vielen {V}er\"{a}nderlichen.
\newblock {\em J. Reine Angew. Math. 140\/} (1911), 1--28.

\bibitem{S}
{\sc Sinnamon, G.}
\newblock A note on the {S}tieltjes transformation.
\newblock {\em Proc. Roy. Soc. Edinburgh Sect. A 110}, 1-2 (1988), 73--78.

\bibitem{SW}
{\sc Soria, F., and Weiss, G.}
\newblock A remark on singular integrals and power weights.
\newblock {\em Indiana Univ. Math. J. 43}, 1 (1994), 187--204.

\bibitem{W}
{\sc Widder, D.~V.}
\newblock {\em The {L}aplace {T}ransform}.
\newblock Princeton Mathematical Series, v. 6. Princeton University Press,
  Princeton, N. J., 1946.
\newblock 2nd ed.

\end{thebibliography}

\end{document}